\documentclass[final]{mysiamltex1213}
\pdfoutput=1
\usepackage{amsfonts}
\usepackage{amsmath}
\usepackage{bm}
\usepackage{color}      
\usepackage{enumerate}
\usepackage{graphicx}
\usepackage{psfrag}
\usepackage{subfig}
\usepackage[normalem]{ulem}       

\newcommand{\stkout}[1]{\ifmmode\text{\sout{\ensuremath{#1}}}\else\sout{#1}\fi}


\newtheorem{remark}[theorem]{Remark}

\newcommand{\dist}{\mathrm{dist}}       
\newcommand{\sign}{\mathrm{sgn}}        
\newcommand{\tp}{^{T}} 
\newcommand{\dij}{\delta_{ij}}

\newcommand{\va}{\mathbf{a}}

\newcommand{\vu}{\mathbf{u}}
\newcommand{\vv}{\mathbf{v}}
\newcommand{\vw}{\mathbf{w}}
\newcommand{\vz}{\mathbf{z}}

\newcommand{\vg}{\mathbf{g}}
\newcommand{\vq}{\mathbf{q}}
\newcommand{\ve}{\mathbf{e}}
\newcommand{\vQ}{\mathbf{Q}}
\newcommand{\vr}{\mathbf{r}}

\newcommand{\vm}{\mathbf{m}}
\newcommand{\vn}{\mathbf{n}}
\newcommand{\vt}{\mathbf{t}}

\newcommand{\vx}{\mathbf{x}}
\newcommand{\vy}{\mathbf{y}}
\newcommand{\vnu}{\bm{\nu}}

\newcommand{\vzero}{\mathbf{0}}

\newcommand{\Om}{\Omega}
\newcommand{\dOm}{\partial \Omega}
\newcommand{\bdys}{\Gamma_{s}}
\newcommand{\bdyvn}{\Gamma_{\vn}}
\newcommand{\bdyvu}{\Gamma_{\vu}}

\newcommand{\iO}{\int_{\Omega}}






\newcommand{\X}{\mathbb{X}}
\newcommand{\Hbdy}[1]{H^1_{#1}(\Omega)}   
\newcommand{\Y}{V^{\perp}}   
\newcommand{\Sh}{\mathbb{S}_h}   
\newcommand{\Vh}{\mathbb{N}_h}   

\newcommand{\Yh}{\Y_h}   
\newcommand{\Uh}{\mathbb{U}_h}   
\newcommand{\R}{\mathbb{R}}   



\newcommand{\dt}{\delta t}


\newcommand{\Tk}{\mathcal{T}}

\newcommand{\Nk}{\mathcal{N}}




\title{A finite element method for nematic liquid crystals with variable degree of orientation}

\author{Ricardo H. Nochetto$^1$, ~ Shawn W. Walker$^2$, ~ Wujun Zhang$^3$}

\begin{document}
\maketitle
\slugger{mms}{xxxx}{xx}{x}{x--x}

\footnotetext[1]{\texttt{rhn@math.umd.edu}, ~$^2$\texttt{walker@math.lsu.edu}, ~$^3$\texttt{wujun@umd.edu}.}

\begin{abstract}
  We consider the simplest one-constant model, put forward by J. Ericksen, for nematic liquid crystals with variable degree of orientation.  The equilibrium state is described by a director field $\vn$ and its degree of orientation $s$, where the pair $(s, \vn)$ minimizes a sum of Frank-like energies and a double well potential.  In particular, the Euler-Lagrange equations for the minimizer contain a degenerate elliptic equation for $\vn$, which allows for line and plane defects to have finite energy.

We present a structure preserving discretization of the liquid crystal energy with piecewise linear finite elements that can handle the degenerate elliptic part without regularization, and show that it is consistent and stable. We prove $\Gamma$-convergence of discrete global minimizers to continuous ones as the mesh size goes to zero.  We develop a quasi-gradient flow scheme for computing discrete equilibrium solutions and prove it has a strictly monotone energy decreasing property. We present simulations in two and three dimensions to illustrate the method's ability to handle non-trivial defects.

A music video summary of the paper is available on YouTube: ``Mathematical Modeling and Simulation of Nematic Liquid Crystals (A Montage),''
\texttt{http://www.youtube.com/watch?v=pWWw7\_6cQ-U}.

\end{abstract}

\begin{keywords}
liquid crystals, finite element method, gamma-convergence, gradient flow, line defect, plane defect
\end{keywords}

\begin{AMS}
65N30, 49M25, 35J70
\end{AMS}

\pagestyle{myheadings}
\thispagestyle{plain}
\markboth{R.H.~Nochetto, S.W.~Walker, W.~Zhang}{FEM For Liquid
  Crystals with Variable Degree of Orientation}

\section{Introduction}\label{sec:intro}

Complex fluids are ubiquitous in nature and industrial processes and are critical for modern engineering systems \cite{Larson_book1999, Owens_book2002, BirdHassager_book1987}.  An important difficulty in modeling and simulating complex fluids is their inherent microstructure.  Manipulating the microstructure via external forces can enable control of the mechanical, chemical, optical, or thermal properties of the material.  Liquid crystals \cite{Virga_book1994, deGennes_book1995, Calderer_SJMA2002, Ambrosio_MM1990b, Ambrosio_MM1990a, Bauman_ARMA2002, Ball_ARMA2011, Lin_CPAM1989, Lin_CPAM1991, Araki_PRL2006, Tojo_EPJE2009} are a relatively simple example of a material with microstructure that may be immersed in a fluid with a free interface \cite{Yang_JNNFM2011, Yang_JCP2013}.

Several numerical methods for liquid crystals have been proposed in
\cite{Bartels_MC2010b,  Hardt_CVPDE1988, Cohen_CPC1989, Lin_SJNA1989, Alouges_SJNA1997}
for harmonic mappings and liquid crystals with fixed degree of orientation, i.e.
a unit vector field $\vn(x)$ (called the director field) is used to represent the orientation of liquid crystal molecules.
See \cite{Guillen-Gonzalez_M2AN2013, Liu_SJNA2000, Walkington_M2AN2011} for methods that couple liquid crystals
to Stokes flow. We also refer to the survey paper \cite{Badia_ACME2011} for more numerical methods.

In this paper, we consider the one-constant model for liquid crystals with variable degree of orientation \cite{Ericksen_ARMA1991, deGennes_book1995, Virga_book1994}. The state of the liquid crystal is described by a director field $\vn(x)$ and a scalar function $s(x)$, $-1/2<s<1$, that represents the degree of alignment that molecules have with respect to $\vn$. The equilibrium state is given by $(s, \vn)$ which minimizes the so-called one-constant Ericksen's energy \eqref{energy}.

Despite the simple form of the one-constant Ericksen's model, its minimizer may have non-trivial defects.  If $s$ is a non-vanishing constant, then the energy reduces to the Oseen-Frank energy whose minimizers are harmonic maps that may exhibit point defects (depending on boundary conditions) \cite{Bethuel_book1994, Blinov_book1983, Brezis_CMP1986, Lin_CPAM1991, Lin_CPAM1989, Schoen_JDG1982}.  If $s$ is part of the minimization of \eqref{energy}, then $s$ may vanish to allow for line (and plane) defects in dimension $d=3$ \cite{Araki_PRL2006, Tojo_EPJE2009}, and the resulting Euler-Lagrange equation for $\vn$ is degenerate. However, in \cite{Lin_CPAM1991}, it was shown that both $s$ and $\vu=s\vn$ have strong limits, which enabled the study of regularity properties of minimizers and the size of defects.  This inspired the study of dynamics \cite{Calderer_SJMA2002} and corresponding numerics \cite{Barrett_M2AN2006}, which are most relevant to our paper.  However, in both cases they regularize the model to avoid the degeneracy introduced by the $s$ parameter.


We design a finite element method (FEM) \emph{without} any
regularization. We prove stability and convergence properties and
explore equilibrium configurations of liquid crystals via quasi-gradient flows.
Our method builds on \cite{Bartels_IFB2012, Bartels_SJNA2006,
  Bartels_M2AN2010} and consists of a structure preserving
discretization of \eqref{energy}. Given a weakly acute mesh
$\Tk_h$ with mesh size $h$ (see Section \ref{subsec:FEM}),
we use the subscript $h$ to denote continuous piecewise linear functions
defined over $\Tk_h$, e.g. $(s_h, \vn_h)$ is a discrete approximation of $(s, \vn)$.

Our discretization of the energy is defined in
\eqref{discrete_energy} and requires that $\Tk_h$ be \emph{weakly acute}.
This discretization preserves the underlying structure and converges to the continuous energy in the sense of $\Gamma$-convergence \cite{Braides_book2002} as $h$ goes to zero.  Next, we develop a quasi-gradient flow scheme for computing discrete equilibrium solutions.
We prove that this scheme has a strictly monotone energy decreasing property.
Finally, we carry out numerical experiments and show that our finite element method, and gradient flow, allows for computing minimizers that exhibit line and plane defects.

The paper is organized as follows.
In Section \ref{sec:Discretization}, we describe the Ericksen model
for liquid crystals with variable degree of orientation, as well as
the details of our discretization.  Section \ref{sec:consistency}
shows the $\Gamma$-convergence of our numerical method.  A
quasi-gradient flow scheme is given in Section
\ref{sec:gradient_flow}, where we also prove a strictly monotone
energy decreasing property.
Section
\ref{sec:numerics} presents simulations in two and three dimensions
that exhibit non-trivial defects in order to illustrate the method's capabilities.

\section{Discretization of Ericksen's model}\label{sec:Discretization}

We review the model \cite{Ericksen_ARMA1991} and relevant analysis
results from the literature.  We then develop our discretization
strategy and show it is stable.  The space dimension $d\ge2$
can be arbitrary.

\subsection{Ericksen's one constant model}\label{subsec:model}


Let the director field $\vn : \Omega \subset \mathbb{R}^d \rightarrow \mathbb{S}^{d-1}$ be a vector-valued function with unit length, and the degree of orientation $s: \Omega \subset \mathbb{R}^d \rightarrow [ - \frac{1}{2} , 1] $ be a real valued function.
The case $s = 1$ represents the state of perfect alignment in which all molecules are parallel to $\vn$. Likewise, $s = -1/2$ represents the state of microscopic order in which all molecules are orthogonal to the orientation $\vn$.
When $s = 0$, the molecules do not lie along any preferred direction
which represents the state of an isotropic distribution of molecules.

The equilibrium state of the liquid crystals is described by the pair $(s, \vn)$ minimizing a bulk-energy functional which in the simplest one-constant model reduces to
\vspace{-0.2cm}
\begin{equation}\label{energy}
  E[ s ,\vn]  := \underbrace{ \int_{\Omega} \Big(\kappa | \nabla s |^2 + s^2 | \nabla  \vn |^2\Big) dx }_{=:   E_1[ s ,\vn]} + \underbrace{ \int_{\Omega} \psi (s) dx }_{=:   E_2[ s ]},
\end{equation}
with $\kappa > 0$ and double well potential $\psi$, which is a $C^2$
  function defined on $-1/2 < s < 1$ that satisfies
\begin{enumerate}
  \item $\lim_{s \rightarrow 1} \psi (s) = \lim_{s \rightarrow -1/2} \psi (s) = \infty$,
  \item $\psi(0) > \psi(s^*) = \min_{s \in [-1/2, 1]} \psi(s)=0$
    for some $s^* \in (0,1)$,
    \item $\psi'(0) =0 $;
\end{enumerate}
see \cite{Ericksen_ARMA1991}.
Note that when the degree of orientation $s$ equals a non-zero
constant, the energy \eqref{energy} effectively reduces to the
Oseen-Frank energy $\int_{\Om} |\nabla \vn|^2$.
The degree of orientation $s$ relaxes the energy of
defects (i.e. discontinuities in $\vn$), which may still have finite energy
$
E[ s ,\vn]
$
if the singular set
\begin{align}\label{singularset}
  \mathcal{S} := \{ x \in \Omega ,\; s(x) = 0 \}
\end{align}
is non-empty; in this case, $\vn \notin H^1(\Omega)$.

By introducing an auxiliary variable $\vu = s \vn$
\cite{Lin_CPAM1991, Ambrosio_MM1990a}, we rewrite the energy as
\begin{equation}\label{auxiliary_energy_identity}
  E_1[ s , \vn] = \widetilde{E}_1[ s , \vu] := \int_{\Omega}
  \Big((\kappa - 1) | \nabla s |^2 + | \nabla  \vu |^2 \Big) dx,
\end{equation}
which follows from the orthogonal splitting
$\nabla \vu = \vn \otimes \nabla s + s \nabla \vn$ due to the
constraint $| \vn | = 1$.  Accordingly, we define the admissible class
\begin{equation}\label{admissibleclass}
\begin{split}
    \mathbb{A} :=& \{ (s, \vu) : \Omega \rightarrow (-1/2 , 1) \times\mathbb{R}^d:
    ~ (s,\vu) \in [H^1(\Omega)]^{d+1},\; \vu = s \vn, \vn\in \mathbb{S}^{d-1} \}.
\end{split}
\end{equation}
We say that the pair $(s,\vu)$ satisfies the {\it structural
condition} for the Ericksen energy if
\begin{equation}\label{structure}
  \vu = s \vn, ~~ -1/2 < s < 1 ~ \text{ a.e. in } \Omega,
  ~ \text{ and } \vn \in \mathbb{S}^{d-1} ~ \text{ a.e. in } \Omega.
\end{equation}
%
%
Moreover, we may enforce boundary conditions on $(s,\vu)$,
possibly on different parts of the boundary.  Let $(\bdys,\bdyvu)$
be open subsets of $\dOm$ where we set Dirichlet boundary
conditions for $(s,\vu)$.
Then we have the following restricted admissible class
\begin{align}\label{admissibleclass_BC}
  \mathbb{A}(g,\vr) := \left\{ (s, \vu) \in \mathbb{A} :
  ~ s|_{\bdys} = g, \quad \vu|_{\bdyvu} = \vr \right\},
\end{align}
for some given functions $(g,\vr)\in [W^1_\infty(\R^d)]^{d+1}$ that
satisfy the structural condition \eqref{structure} on $\partial\Omega$.
We assume the existence of $\delta_0>0$ sufficiently small such that
\begin{equation}\label{Dirichlet-restrict}
  -\frac{1}{2} + \delta_0 \le g(x), \, \vr(x)\cdot\xi \le 1 - \delta_0
  \qquad\forall \, x\in\R^d, \, \xi \in \R^d, \, |\xi|=1,
\end{equation}
and the potential $\psi$ satisfies
\begin{equation}\label{potential}
  \psi(s) \ge \psi(1-\delta_0) \quad \text{for } s \ge 1-\delta_0,
  \qquad
  \psi(s) \ge \psi(-\frac{1}{2}+\delta_0) \quad \text{for }
  s \le -\frac{1}{2}+\delta_0.
\end{equation}
This is consistent with property (1) of $\psi$.
If we further assume that
\begin{equation}\label{gne0}
g \ge \delta_0 \quad\text{ on } \partial \Omega,
\end{equation}
then the function $\vn$ is $H^1$ in a neighborhood of $\partial \Omega$ and satisfies
$\vn=g^{-1}\vr$ on $\partial \Omega$.

The existence of a minimizer $(s,\vu) \in \mathbb{A}(g,\vr)$
is shown in \cite{Lin_CPAM1991, Ambrosio_MM1990a}, but this is also a
consequence of our $\Gamma$-convergence theory.
It is worth mentioning that the constant $\kappa$ in $E[ s ,\vn]$ \eqref{energy} plays a significant role in the occurrence of defects.
Roughly speaking, if $\kappa$ is large, then $\int_{\Omega} \kappa
|\nabla s|^2 dx$ dominates the energy and $s$ is close to a
constant. In this case, defects with finite energy are less likely to
occur. But if $\kappa$ is small, then $\int_{\Omega} s^2 |\nabla
\vn|^2 dx$ dominates the energy, and $s$ may become zero. In this
case, defects are more likely to occur. (This heuristic argument is
later confirmed in the numerical experiments.) Since the investigation
of defects is of primary interest in this paper, we consider
the most significant case to be $0 < \kappa < 1$.

We now describe our finite element discretization $E_h[s_h,\vn_h]$ of the energy
\eqref{energy} and its minimizer $(s_h,\vn_h)$.

\subsection{Discretization of the energy}\label{subsec:FEM}
Let $\Tk_h =\{ T \}$ be a conforming simplicial triangulation of the domain
$\Omega$. We denote by $\Nk_h$ the set of nodes
(vertices) of $\Tk_h$ and the cardinality of $\Nk_h$
by $N$ (with some abuse of notation). We demand that $\Tk_h$ be {\it weakly acute}, namely
\begin{equation}\label{weakly-acute}
  k_{ij} := -\int_{\Om} \nabla \phi_i \cdot \nabla \phi_j dx \ge 0
  \quad\text{for all } i\ne j,
\end{equation}
where $\phi_i$ is the standard ``hat'' function associated with node
$x_i \in \Nk_h$. We indicate with $\omega_i = \text{supp} \;\phi_i$ the patch of a node $x_i$ (i.e. the ``star'' of elements in $\Tk_h$ that contain the vertex $x_i$).
Condition \eqref{weakly-acute} imposes a severe geometric restriction
on $\Tk_h$ \cite{Ciarlet_CMAME1973, Strang_FEMbook2008}. We recall the
following characterization of \eqref{weakly-acute} for $d=2$.
\smallskip
\begin{proposition}[weak acuteness in two dimensions]
\label{weak_acuteness_2D}
For any pair of triangles $T_1$, $T_2$ in $\Tk_h$ that share a common
edge $e$, let $\alpha_i$ be the angle in $T_i$ opposite to $e$ (for $i=1,2$).
If $\alpha_1 + \alpha_2 \leq \pi$ for every edge $e$, then
\eqref{weakly-acute} holds.
\end{proposition}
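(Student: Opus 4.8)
The plan is to localize $k_{ij}$ to the edge joining $x_i$ and $x_j$ and then invoke the classical cotangent formula for the two-dimensional stiffness matrix. First, observe that $k_{ij}=-\int_\Omega \nabla\phi_i\cdot\nabla\phi_j\,dx$ vanishes unless $\omega_i\cap\omega_j$ has positive measure, which happens exactly when $x_i$ and $x_j$ are the endpoints of a common edge $e$ of $\Tk_h$; in that case $\omega_i\cap\omega_j$ is the union of the (one or two) triangles of $\Tk_h$ having $e$ as an edge, and
\begin{equation*}
  k_{ij} = -\!\!\sum_{T\,:\,e\subset\partial T}\!\!\int_T \nabla\phi_i\cdot\nabla\phi_j\,dx ,
\end{equation*}
so it suffices to understand one triangle at a time.

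Second, I would compute the contribution of a single triangle $T$ with vertices $x_i$, $x_j$, $x_k$. On $T$ the functions $\phi_i$, $\phi_j$ are affine, so $\nabla\phi_i$ and $\nabla\phi_j$ are constant: $\nabla\phi_i$ is orthogonal to the line $\ell_i$ through $x_j$ and $x_k$, points from $\ell_i$ toward $x_i$, and has magnitude $1/\dist(x_i,\ell_i)$, and symmetrically for $\nabla\phi_j$. One checks directly (for instance on an explicit equilateral triangle, or by a rotation argument) that the angle between these two inward-pointing gradients equals $\pi-\theta_k$, where $\theta_k$ is the interior angle of $T$ at $x_k$, i.e.\ the angle of $T$ opposite $e$. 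Combining $\nabla\phi_i\cdot\nabla\phi_j=-|\nabla\phi_i|\,|\nabla\phi_j|\cos\theta_k$ with the two expressions $|T|=\tfrac12\,|x_j-x_k|\,\dist(x_i,\ell_i)=\tfrac12\,|x_i-x_k|\,|x_j-x_k|\sin\theta_k$ for the area, the side lengths and the area cancel and one obtains the well-known identity
\begin{equation*}
  \int_T \nabla\phi_i\cdot\nabla\phi_j\,dx = |T|\,\nabla\phi_i\cdot\nabla\phi_j = -\tfrac12\cot\theta_k .
\end{equation*}

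Third, I would sum over the triangles sharing $e$. For an interior edge this gives $k_{ij}=\tfrac12(\cot\alpha_1+\cot\alpha_2)$, and the addition formula yields
\begin{equation*}
  \cot\alpha_1+\cot\alpha_2 = \frac{\sin\alpha_1\cos\alpha_2+\cos\alpha_1\sin\alpha_2}{\sin\alpha_1\sin\alpha_2} = \frac{\sin(\alpha_1+\alpha_2)}{\sin\alpha_1\sin\alpha_2}\,.
\end{equation*}
Since $\alpha_1,\alpha_2\in(0,\pi)$ the denominator is strictly positive, while the hypothesis $\alpha_1+\alpha_2\le\pi$ together with $\alpha_1+\alpha_2>0$ forces $\sin(\alpha_1+\alpha_2)\ge0$; hence $k_{ij}\ge0$. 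For an edge on $\partial\Omega$ only one triangle contributes, $k_{ij}=\tfrac12\cot\alpha_1$, which is nonnegative precisely when $\alpha_1\le\pi/2$ — the limiting case $\alpha_2\to0$ of the stated condition, and the form in which it should be read on boundary edges. This establishes \eqref{weakly-acute}.

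The argument is elementary, and the only real subtlety is the sign/orientation bookkeeping in the cotangent identity — in particular, verifying that on a triangle the angle between $\nabla\phi_i$ and $\nabla\phi_j$ is the \emph{supplement} of the mesh angle opposite $e$, so that each triangle sharing $e$ contributes $+\tfrac12\cot(\text{opposite angle})$ to $k_{ij}$ — together with the separate (and easier) treatment of boundary edges. No compactness or functional-analytic input is needed; everything reduces to the geometry of a single pair of adjacent triangles.
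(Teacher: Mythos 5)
Your argument is correct: it is the classical cotangent identity $\int_T\nabla\phi_i\cdot\nabla\phi_j\,dx=-\tfrac12\cot\theta_k$, summed over the (one or two) triangles containing the edge $x_ix_j$, followed by the addition formula for $\cot\alpha_1+\cot\alpha_2$. The paper itself gives no proof of this proposition --- it is recalled as a classical fact with references to Ciarlet--Raviart and Strang--Fix, where essentially this computation appears --- so there is no alternative argument to compare against. Your remark about boundary edges is a genuine and worthwhile observation: as literally stated the proposition only constrains pairs of triangles sharing an edge, so for an edge on $\partial\Omega$ one must separately require the single opposite angle to be at most $\pi/2$ to conclude $k_{ij}\ge0$ there; your reading of this as the degenerate one-triangle case of the same condition is the right way to interpret the statement.
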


\smallskip\noindent
Generalizations of Proposition \ref{weak_acuteness_2D} to three dimensions, involving interior dihedral angles of tetrahedra, can be found in \cite{Korotov_MC2001, Brandts_LAA2008}.

We construct continuous piecewise affine spaces associated with the mesh, i.e.
\begin{equation}\label{eqn:discrete_spaces}
\begin{split}
  \Sh &:= \{ s_h \in H^1(\Om) : s_h |_{T} \text{ is affine for all } T \in \Tk_h \}, \\
  \Uh &:= \{ \vu_h \in H^1(\Om)^d : \vu_h |_{T} \text{ is affine in
    each component for all } T \in \Tk_h \}, \\
  \Vh &:= \{ \vn_h \in \Uh : |\vn_h(x_i)| = 1 \text{ for all nodes $x_i \in \Nk_h$} \}.
\end{split}
\end{equation}
Let $I_h$ denote the piecewise linear Lagrange interpolation
operator on mesh $\Tk_h$ with values in either $\Sh$ or $\Uh$.
We say that a pair $(s_h,\vu_h)\in\Sh\times\Uh$ satisfies the {\it discrete
structural condition} for the Ericksen energy if there exists
$\vn_h\in\Vh$ such that
\begin{equation}\label{discrete-structure}
  \vu_h = I_h[s_h \vn_h],
  \quad
  -\frac{1}{2} < s_h < 1
  \quad
  \text{in } \Omega.
\end{equation}
We then let
$g_h := I_h g$ and $\vr_h := I_h \vr$ be the discrete Dirichlet data,
and introduce the discrete spaces that include (Dirichlet) boundary conditions
\begin{equation*}\label{eqn:discrete_spaces_BC}
\begin{split}
  \Sh (\bdys,g_h) &:= \{ s_h \in \Sh : s_h |_{\bdys} = g_h \}, \quad
  \Uh (\bdyvu,\vr_h) := \{ \vu_h \in \Uh : \vu_h |_{\bdyvu} = \vr_h \},
\end{split}
\end{equation*}
as well as the discrete admissible class
\begin{equation}\label{discrete-class}
  \mathbb{A}_h(g_h,\vr_h) := \Big\{
  (s_h,\vu_h) \in \Sh (\bdys,g_h) \times \Uh (\bdyvu,\vr_h):
  (\ref{discrete-structure}) \, \textrm{holds}  \Big\}.
\end{equation}
In view of \eqref{gne0}, we can also impose the Dirichlet condition
$\vn_h = I_h[g_h^{-1} \vr_h]$ on $\dOm$.

In order to motivate our discrete version of $E_1[ s , \vn]$, note that for all $x_i \in \Nk_h$
\[
\sum_{j=1}^N k_{ij} = - \sum_{j=1}^N \int_{\Omega} \nabla \phi_i \cdot \nabla \phi_j dx = 0
\]
because $\sum_{j=1}^N \phi_j = 1$ in the domain $\Omega$; the set of
hat functions $\{\phi_j\}_{j=1}^N$ is a partition of unity.
Therefore, for piecewise linear $s_h = \sum_{i=1}^N s_h(x_i) \phi_i$, we have
\begin{align*}
  \int_{\Om} | \nabla s_h |^2 dx = -\sum_{i=1}^N k_{ii} s_h(x_i)^2 - \sum_{i, j = 1, i \neq j}^N k_{ij} s_h(x_i) s_h(x_j),
\end{align*}
whence, exploiting $k_{ii} = - \sum_{j \neq i} k_{ij}$ and the symmetry $k_{ij}=k_{ji}$, we get
\begin{equation}\label{eqn:dirichlet_integral_identity}
\begin{aligned}
  \int_{\Om} | \nabla s_h |^2 dx &= \sum_{i, j = 1}^N k_{ij} s_h(x_i) \big(s_h(x_i) - s_h(x_j)\big)
\\
&= \frac{1}{2} \sum_{i, j = 1}^N k_{ij} \big(s_h(x_i) - s_h(x_j)\big)^2 = \frac{1}{2} \sum_{i, j = 1}^N k_{ij} \big( \dij s_h \big)^2,
\end{aligned}
\end{equation}
where we define
\begin{equation}\label{eqn:delta_ij}
  \dij s_h := s_h(x_i) - s_h(x_j), \quad \dij \vn_h := \vn_h(x_i) - \vn_h(x_j).
\end{equation}
With this in mind, we define the discrete energies to be
\begin{equation}\label{discrete_energy_E1}
\begin{split}
  E_1^h [s_h, \vn_h] := & \frac{\kappa}{2} \sum_{i, j = 1}^N k_{ij} \left( \dij s_h \right)^2 + \frac{1}{2} \sum_{i, j = 1}^N k_{ij} \left(\frac{s_h(x_i)^2 + s_h(x_j)^2}{2}\right) |\dij \vn_h|^2,
\end{split}
\end{equation}
and
\begin{equation}\label{discrete_energy_E2}
  E_2^h [s_h] := \int_{\Om} \psi (s_h(x)) dx,
\end{equation}
for $(s_h,\vu_h)\in\mathbb{A}_h(g_h,\vu_h)$.
The second summation in \eqref{discrete_energy_E1} does \emph{not}
come from applying the standard discretization of $\int_{\Om} s^2 |\nabla  \vn |^2 dx$
by piecewise linear elements.  It turns out that this special form of
the discrete energy preserves the key energy inequality (Lemma \ref{lemma:energydecreasing})
which allows us to establish our $\Gamma$-convergence analysis for the degenerate coefficient $s^2$ \emph{without} regularization.
Eventually, we seek an approximation $(s_h, \vu_h) \in\mathbb{A}_h(g_h,\vr_h)$
of the pair $(s, \vu)$ such that the discrete pair $(s_h, \vn_h)$ minimizes the discrete version of the bulk energy \eqref{energy} given by
\begin{equation}\label{discrete_energy}
  E_h[s_h, \vn_h] := E_1^h [s_h, \vn_h] + E_2^h [s_h].
\end{equation}

The following result shows that definition \eqref{discrete_energy_E1}
preserves the key structure \eqref{auxiliary_energy_identity} of
\cite{Ambrosio_MM1990a,Lin_CPAM1991} at the discrete level, which turns
out to be crucial for our analysis as well.

We first introduce $\widetilde{s}_h:=I_h|s_h|$ and two
discrete versions of the vector field $\vu$
\begin{equation}\label{uh}
\vu_h:=I_h[s_h \vn_h] \in \Uh,
\quad
\widetilde\vu_h := I_h[\widetilde{s}_h \vn_h]\in\Uh.
\end{equation}
Note that both pairs $(s_h, \vu_h),(\widetilde{s}_h, \widetilde{\vu}_h)
\in \Sh\times\Uh$ satisfy \eqref{discrete-structure}.

\smallskip
\begin{lemma}[energy inequality]
\label{lemma:energydecreasing}
Let the mesh $\Tk_h$ satisfy \eqref{weakly-acute}.
If $(s_h,\vu_h)\in\mathbb{A}_h(g_h,\vr_h)$, then,
for any $\kappa > 0$, the discrete energy \eqref{discrete_energy_E1} satisfies
\begin{align}
  E_1^h [s_h, \vn_h]
  \geq
  (\kappa - 1) \iO |\nabla s_h|^2 dx + \iO |\nabla \vu_h|^2 dx
  =: \widetilde E_1^h[s_h,\vu_h],
  \label{energy_inequality}
\end{align}
as well as
\begin{align}\label{abs_inequality}
  E_1^h [s_h, \vn_h]
  \geq
  (\kappa - 1) \iO |\nabla \widetilde{s}_h|^2 dx + \iO |\nabla \widetilde{\vu}_h|^2 dx
  =: \widetilde E_1^h[\widetilde{s}_h,\widetilde\vu_h].
\end{align}
\end{lemma}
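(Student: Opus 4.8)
The plan is to work locally on each pair of nodes and reduce both inequalities to an elementary pointwise estimate on the scalar coefficients. The key observation is that the first term of $E_1^h$, namely $\tfrac{\kappa}{2}\sum k_{ij}(\dij s_h)^2$, equals exactly $\kappa \iO |\nabla s_h|^2\,dx$ by the identity \eqref{eqn:dirichlet_integral_identity}, so I only need to control the second (vector) term. For \eqref{energy_inequality} I would compute $\iO |\nabla \vu_h|^2\,dx$ using the same partition-of-unity identity applied to the vector-valued piecewise linear function $\vu_h=I_h[s_h\vn_h]$: since $\vu_h(x_i)=s_h(x_i)\vn_h(x_i)$ at nodes, \eqref{eqn:dirichlet_integral_identity} gives $\iO |\nabla \vu_h|^2 = \tfrac12\sum_{i,j} k_{ij}\,|\dij\vu_h|^2$ with $\dij\vu_h = s_h(x_i)\vn_h(x_i) - s_h(x_j)\vn_h(x_j)$. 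Likewise $(\kappa-1)\iO|\nabla s_h|^2 = \tfrac{\kappa-1}{2}\sum k_{ij}(\dij s_h)^2$. Adding these and comparing term-by-term with $E_1^h$, the inequality \eqref{energy_inequality} reduces (using $k_{ij}\ge0$ from weak acuteness \eqref{weakly-acute}) to showing, for each pair $(i,j)$ with $|\vn_h(x_i)|=|\vn_h(x_j)|=1$ and writing $a=s_h(x_i)$, $b=s_h(x_j)$,
\begin{equation*}
  \frac{a^2+b^2}{2}\,|\vn_h(x_i)-\vn_h(x_j)|^2 \;\ge\; |a\vn_h(x_i)-b\vn_h(x_j)|^2 - (a-b)^2.
\end{equation*}
Expanding the right side using $|\vn_h(x_i)|=|\vn_h(x_j)|=1$ gives $a^2+b^2 - 2ab\,\vn_h(x_i)\cdot\vn_h(x_j) - (a-b)^2 = 2ab\,(1-\vn_h(x_i)\cdot\vn_h(x_j))$, while the left side expands to $(a^2+b^2)(1-\vn_h(x_i)\cdot\vn_h(x_j))$. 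Hence the required pointwise inequality is $(a^2+b^2)(1-\vn_h(x_i)\cdot\vn_h(x_j)) \ge 2ab(1-\vn_h(x_i)\cdot\vn_h(x_j))$, which follows from $a^2+b^2\ge 2ab$ together with $1-\vn_h(x_i)\cdot\vn_h(x_j)\ge0$ (the latter because both vectors are unit). This is the heart of the argument and the reason the nonstandard averaged coefficient $\tfrac{s_h(x_i)^2+s_h(x_j)^2}{2}$ was chosen in \eqref{discrete_energy_E1}.

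For \eqref{abs_inequality} I would repeat the identical computation with $\widetilde s_h = I_h|s_h|$ in place of $s_h$ and $\widetilde\vu_h = I_h[\widetilde s_h\vn_h]$ in place of $\vu_h$. The only change is that the coefficient appearing on the right, $\tfrac{\widetilde s_h(x_i)^2+\widetilde s_h(x_j)^2}{2} = \tfrac{|s_h(x_i)|^2 + |s_h(x_j)|^2}{2} = \tfrac{s_h(x_i)^2+s_h(x_j)^2}{2}$, is \emph{unchanged}, since squaring kills the absolute value at the nodes. Thus the coefficient of $|\dij\vn_h|^2$ in $E_1^h$ is literally the same whether we write it with $s_h$ or with $\widetilde s_h$, and the $\kappa$-term $\tfrac{\kappa}{2}\sum k_{ij}(\dij s_h)^2$ dominates $\tfrac{\kappa}{2}\sum k_{ij}(\dij\widetilde s_h)^2$ because $\big||s_h(x_i)|-|s_h(x_j)|\big|\le |s_h(x_i)-s_h(x_j)|$ and $k_{ij}\ge0$. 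Applying the same pointwise inequality from the previous paragraph — now with $a=|s_h(x_i)|$, $b=|s_h(x_j)|$ — yields \eqref{abs_inequality}.

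The main obstacle is not any single estimate but organizing the bookkeeping: one must be careful that \eqref{eqn:dirichlet_integral_identity} applies verbatim to the vector-valued interpolants $\vu_h,\widetilde\vu_h$ (it does, componentwise, since the proof only used that hat functions form a partition of unity), and that the cross terms in the node-by-node expansion line up so that weak acuteness $k_{ij}\ge0$ can be invoked to pass from a sum of nonnegative contributions to the global inequality. A minor point worth stating explicitly is that $\widetilde s_h$ need not satisfy $-1/2<\widetilde s_h<1$ pointwise in the interior even though the nodal values do, but this is irrelevant for \eqref{abs_inequality}, which is purely an energy bound; no admissibility of $(\widetilde s_h,\widetilde\vu_h)$ is asserted here. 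I would close the proof by noting that both \eqref{energy_inequality} and \eqref{abs_inequality} hold for arbitrary $\kappa>0$ precisely because the vector-term inequality above is independent of $\kappa$ and the $\kappa$-term is handled separately and identically in both cases.
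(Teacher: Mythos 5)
Your proof is correct and follows essentially the same route as the paper: a node-pair algebraic expansion using $k_{ij}\ge 0$, the unit-length constraint $|\vn_h(x_i)|=1$, and the reverse triangle inequality $|\dij \widetilde{s}_h|\le|\dij s_h|$; your pointwise inequality is exactly the paper's identity \eqref{energyequality} in disguise, since the per-pair deficit you compute equals $(a-b)^2(1-\vn_h(x_i)\cdot\vn_h(x_j)) = (\dij s_h)^2|\dij\vn_h/2|^2$, which is the consistency term the paper keeps track of explicitly (and reuses later in the lim-sup argument).
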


\begin{proof}
Since
\begin{align*}
  s_h(x_i) \vn_h(x_i) - s_h(x_j)\vn_h(x_j) &=
  \frac{s_h(x_i) + s_h(x_j)}{2} \big( \vn_h(x_i) - \vn_h(x_j) \big)
  \\ &+
\big(s_h(x_i) - s_h(x_j) \big) \frac{\vn_h(x_i) + \vn_h(x_j)}{2},
\end{align*}
using the orthogonality relation
$\big( \vn_h(x_i) - \vn_h(x_j)\big)\cdot \big(\vn_h(x_i) + \vn_h(x_j)\big)
= |\vn_h(x_i)|^2 - |\vn_h(x_j)|^2 = 0$ and
\eqref{eqn:dirichlet_integral_identity} yields
  \begin{align*}
    &\; \iO |\nabla \vu_h|^2 dx = \frac{1}{2} \sum_{i, j = 1}^N k_{ij} |s_h(x_i) \vn_h(x_i) - s_h(x_j)\vn_h(x_j)|^2
    \\
     = &\;
     \frac{1}{2} \sum_{i, j = 1}^N k_{ij} \left(\frac{s_h(x_i) + s_h(x_j)}{2}\right)^2 |\dij \vn_h|^2 + \frac{1}{2} \sum_{i, j = 1}^N k_{ij} (\dij s_h)^2 \left| \frac{\vn_h(x_i) + \vn_h(x_j)}{2} \right|^2.
\end{align*}
Exploiting the relations
$|\vn_h(x_i)-\vn_h(x_j)|^2 + |\vn_h(x_i)+\vn_h(x_j)|^2 = 4$ and
$\big(s_h(x_i) + s_h(x_j)\big)^2 = 2 \big(s_h(x_i)^2 +
s_h(x_j)^2\big) - \big(s_h(x_i) - s_h(x_j)\big)^2$,
we obtain
\begin{equation}\label{lem:identity}
\begin{aligned}
\iO |\nabla \vu_h|^2 dx &= \frac{1}{2} \sum_{i, j = 1}^N k_{ij}
\frac{s_h(x_i)^2 + s_h(x_j)^2}{2} |\dij \vn_h|^2
\\
&+ \frac{1}{2} \sum_{i, j = 1}^N k_{ij} (\dij s_h)^2
- \sum_{i, j = 1}^N k_{ij} (\dij s_h)^2 \left| \frac{\vn_h(x_i) - \vn_h(x_j)}{2} \right|^2,
\end{aligned}
\end{equation}
whence, we infer that
\begin{equation}\label{energyequality}
 E_1^h[s_h, \vn_h] = \int_{\Omega}
 \Big((\kappa-1) | \nabla s_h |^2 + | \nabla  \vu_h |^2 \Big) dx
 + \sum_{i, j = 1}^N k_{ij} (\dij s_h)^2 \left| \frac{\dij \vn_h}{2} \right|^2.
\end{equation}
The inequality \eqref{energy_inequality} follows directly from $k_{ij} \geq 0$ for $i \neq j$.

To prove \eqref{abs_inequality}, we note that \eqref{lem:identity}
still holds if we replace
$(s_h,\vu_h)$ with $(\widetilde{s}_h,\widetilde{\vu}_h)$:
\begin{equation}\label{energyinequality_tilde}
\begin{aligned}
\iO |\nabla \widetilde{\vu}_h|^2 dx
&= \frac{1}{2} \sum_{i, j = 1}^N k_{ij}
\frac{\widetilde{s}_h(x_i)^2 + \widetilde{s}_h(x_j)^2} 2 |\dij \vn_h|^2
\\
& + \frac{1}{2} \sum_{i, j = 1}^N k_{ij}  (\dij \widetilde{s}_h)^2
- \sum_{i, j = 1}^N k_{ij} (\dij \widetilde{s}_h)^2
\left| \frac{\vn_h(x_i) - \vn_h(x_j)}{2} \right|^2.
\end{aligned}
\end{equation}
We finally find that
\begin{align*}
\widetilde{E}_1^h[\widetilde{s}_h,\widetilde{\vu}_h]
&= \iO \Big(|\nabla \widetilde{\vu}_h|^2
+ (\kappa - 1) |\nabla \widetilde{s}_h|^2\Big) dx
= \frac{1}{2} \sum_{i, j = 1}^N k_{ij}
\frac{\widetilde{s}_h(x_i)^2 + \widetilde{s}_h(x_j)^2} 2 |\dij \vn_h|^2
\\
&
+ \frac{\kappa}{2} \sum_{i, j = 1}^N k_{ij}  (\dij \widetilde{s}_h)^2
- \sum_{i, j = 1}^N k_{ij} (\dij \widetilde{s}_h)^2 \left|
\frac{\vn_h(x_i) - \vn_h(x_j)}{2} \right|^2 \leq E_1^h[s_h, \vn_h],
\end{align*}
where we have dropped the last term and
used the triangle inequality
$|\dij \widetilde{s}_h| = \big|\widetilde{s}_h(x_i)
- \widetilde{s}_h(x_j)\big| \leq \big|s_h(x_i) -
s_h(x_j)\big| = |\dij s_h|$ along with $k_{ij}\ge0$ to obtain
\begin{equation}\label{sh-tildesh}
  \|\nabla\widetilde{s}_h\|_{L^2(\Omega)}^2
  = \frac{1}{2} \sum_{i, j = 1}^N k_{ij}  (\dij \widetilde{s}_h)^2
  \le \frac{1}{2} \sum_{i, j = 1}^N k_{ij}  (\dij {s}_h)^2
  = \|\nabla{s}_h\|_{L^2(\Omega)}^2.
\end{equation}
This concludes the proof.
\end{proof}

\smallskip
\begin{remark}[relation between \eqref{energy_inequality} and
\eqref{abs_inequality}]\label{rem:why_grad_I_h_abs_S}
Both \eqref{energy_inequality} and \eqref{abs_inequality} account
for the {\it variational crime} committed when
enforcing $\vu_h=s_h\vn_h$ and $\widetilde{\vu}_h = \widetilde{s}_h \vn_h$
\emph{only at the vertices}, and mimics \eqref{auxiliary_energy_identity}.
Since we have a precise control of the
consistency error for \eqref{energy_inequality}, this inequality will
be used later for the consistency (or lim-sup) step of
$\Gamma$-convergence of our discrete energy \eqref{discrete_energy_E1}
to the original continuous energy in \eqref{energy}. On the other hand,
\eqref{abs_inequality} has a suitable structure to prove the weak lower
semi-continuity (or lim-inf) step of $\Gamma$-convergence. This
property is not obvious when $\kappa < 1$,
the most significant case for the formation of defects.
\end{remark}

%
\section{$\Gamma$-convergence of the discrete energy}\label{sec:consistency}
%
In this section, we show that our discrete energy
\eqref{discrete_energy_E1} converges to the
continuous energy \eqref{energy} in the sense of $\Gamma$-convergence.
To this end, we first let the continuous and discrete spaces be
\[
\X := L^2(\Omega) \times [L^2(\Omega)]^d,
\qquad
\X_h := \Sh \times \Uh.
\]
We next define $E[s,\vn]$ as in \eqref{energy} for $(s,\vu)\in\mathbb{A}(g,\vr)$
and $E[s,\vu]=\infty$ for $(s,\vn)\in\X\setminus\mathbb{A}(g,\vr)$.
Likewise, we define $E_h [s_h ,\vn_h] $ as in
\eqref{discrete_energy}
for $(s_h, \vu_h)\in \mathbb{A}_h (g_h, \vr_h)$ and $E_h [s, \vn] = \infty$
for all $(s, \vu) \in \X \setminus \mathbb{A}_h (g_h, \vr_h).$

We split the proof of $\Gamma$-convergence into four subsections.
In subsection \ref{S:lim-sup}, we
use the energy $\widetilde{E}_1^h[s_h,\vu_h]$ to show the consistency
property (recall Remark \ref{rem:why_grad_I_h_abs_S}), whereas we
employ the energy $\widetilde{E}_1^h[\widetilde{s}_h,\widetilde{\vu}_h]$
in subsection \ref{S:lim-inf} to derive the weak lower semi-continuity property.
Furthermore, our functionals exhibit the usual equi-coercivity
property for both pairs $(s,\vu)$ and
$(\widetilde{s},\widetilde{\vu})$, but not for the director field
$\vn$, which is only well-defined
whenever the order parameter $s\ne0$. We discuss these issues in
subsection \ref{S:equi-coercivity} and characterize the limits
$(s,\vu)$, $(\widetilde{s},\widetilde{\vu})$ and $(s,\vn)$.
We eventually prove $\Gamma$-convergence in subsection
\ref{S:Gamma-convergence} by combining these results.

\subsection{Consistency or lim-sup property}\label{S:lim-sup}

We prove the following: if
$(s,\vu)\in\mathbb{A}(g,\vr)$, then there exists a
sequence $(s_h,\vu_h)\in\mathbb{A}_h(g_h,\vr_h)$ converging
to $(s,\vu)$ in $H^1(\Omega)$ and a discrete director
field $\vn_h\in\Vh$ converging to $\vn$ in $L^2(\Omega\setminus \mathcal{S})$ such that
\begin{equation}\label{limsup}
  E_1 [s, \vn] \geq \limsup_{h\rightarrow 0} E_1^h [s_h,\vn_h]
  \ge \limsup_{h\rightarrow 0} \widetilde{E}_1^h [s_h,\vu_h].
\end{equation}
We observe that if $(s,\vu)\notin\mathbb{A}(g,\vr)$, then
$E_1[s, \vn] = \infty$ and \eqref{limsup} is valid
for any sequences $(s_h,\vu_h)$ and $(s_h,\vn_h)$
in light of \eqref{energy_inequality}.

We first show that we can always assume
$-\frac12 + \delta_o\le s \le 1-\delta_0$ for $(s,\vu)\in\mathbb{A}(g,\vr)$.
%
%
\begin{lemma}[truncation]\label{L:truncation}
Given $(s,\vu)\in\mathbb{A}(g,\vr)$, let $(\hat{s},\hat{\vu})$ be the truncations
\[
\hat{s}(x) = \min\Big\{ 1 - \delta_0, \max \Big(-\frac{1}{2}+\delta_0,
s(x) \Big) \Big\},
\quad
\hat{\vu}(x) = \hat{s}(x) \, \vn(x)
\quad a.e. \, x\in\Omega.
\]
Then $(\hat{s},\hat{\vu}) \in \mathbb{A}(g,\vr)$ and
\[
E_1[\hat{s},\vn] \le E_1[s,\vn],
\quad
E_2[\hat{s}] \le E_2[s].
\]
The same assertion is true for any $(s_h,\vu_h) \in \mathbb{A}_h(g_h,\vr_h)$
except that the truncations are defined nodewise, i.e.
$(I_h \hat{s}_h, I_h \hat{\vu}_h)\in \mathbb{A}_h(g_h,\vr_h)$.
\end{lemma}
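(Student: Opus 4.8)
The plan is to verify the three claims — membership of the truncated pair in the admissible class, and the two energy inequalities — by reducing everything to the well-known fact that Lipschitz truncation is a contraction in $H^1$, together with the special structure $\vu=s\vn$ with $|\vn|=1$.

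\emph{Step 1: admissibility.} Write $\beta(t) := \min\{1-\delta_0,\max(-\tfrac12+\delta_0,t)\}$, a $1$-Lipschitz, piecewise affine function of one real variable with $|\beta'|\le 1$ a.e. Then $\hat s = \beta\circ s$ lies in $H^1(\Omega)$ with $|\nabla \hat s| \le |\nabla s|$ a.e., which already gives $E_1[\hat s,\vn]\le E_1[s,\vn]$ on the first term. Since $|\vn|=1$ a.e. and $\hat s$ is bounded, $\hat\vu = \hat s\,\vn \in H^1(\Omega)^d$, and the pair manifestly satisfies the structural condition \eqref{structure} with the \emph{same} director $\vn$ (the range restriction $-\tfrac12<\hat s<1$ is automatic, in fact $-\tfrac12+\delta_0\le \hat s\le 1-\delta_0$). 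For the boundary conditions, observe that on $\bdys$ we have $s=g$ and, by \eqref{Dirichlet-restrict}, $-\tfrac12+\delta_0\le g\le 1-\delta_0$, so $\beta$ acts as the identity there and $\hat s|_{\bdys}=g$; likewise on $\bdyvu$, $\vu=\vr$ and, again by \eqref{Dirichlet-restrict}, the director $\vn=g^{-1}\vr$ (where defined) has the property that $s=|\vr|$ (up to sign conventions) already lies in $[-\tfrac12+\delta_0,1-\delta_0]$, so $\hat\vu|_{\bdyvu}=\hat s\vn = s\vn = \vr$. Hence $(\hat s,\hat\vu)\in\mathbb{A}(g,\vr)$.

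\emph{Step 2: the energy inequalities.} For $E_1$, use the orthogonal splitting $\nabla\hat\vu = \vn\otimes\nabla\hat s + \hat s\nabla\vn$ (valid since $|\vn|=1$), so $|\nabla\hat\vu|^2 = |\nabla\hat s|^2 + \hat s^2|\nabla\vn|^2$, and therefore
\begin{equation*}
E_1[\hat s,\vn] = \iO\big(\kappa|\nabla\hat s|^2 + \hat s^2|\nabla\vn|^2\big)\,dx \le \iO\big(\kappa|\nabla s|^2 + s^2|\nabla\vn|^2\big)\,dx = E_1[s,\vn],
\end{equation*}
using $|\nabla\hat s|\le|\nabla s|$ and $\hat s^2\le s^2$ pointwise (the latter because $\beta$ maps any $t$ to a point no farther from $0$ — note $0\in[-\tfrac12+\delta_0,1-\delta_0]$ by choice of $\delta_0$ small). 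Care is needed on the singular set $\mathcal S=\{s=0\}$, where $\nabla\vn$ need not be defined, but there $s^2|\nabla\vn|^2=0$ is interpreted as zero and $\hat s = 0$ as well, so both integrands vanish there; the splitting is applied on $\Omega\setminus\mathcal S$ where $\vn\in H^1_{\mathrm{loc}}$. For $E_2$: since $\psi$ attains its minimum $0$ at $s^*\in(0,1)$ and, by the monotonicity built into \eqref{potential}, $\psi$ is nonincreasing on $(-\tfrac12,-\tfrac12+\delta_0]$-side data... more simply, $\psi(\beta(t))\le\psi(t)$ for every $t\in(-\tfrac12,1)$: if $t$ already lies in $[-\tfrac12+\delta_0,1-\delta_0]$ then $\beta(t)=t$; if $t>1-\delta_0$ then $\beta(t)=1-\delta_0$ and \eqref{potential} gives $\psi(1-\delta_0)\le\psi(t)$; symmetrically for $t<-\tfrac12+\delta_0$. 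Integrating over $\Omega$ yields $E_2[\hat s]\le E_2[s]$.

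\emph{Step 3: the discrete case.} Here the truncation is applied nodewise: $\hat s_h := I_h(\beta\circ s_h)$, i.e. $\hat s_h(x_i)=\beta(s_h(x_i))$, and $\hat\vu_h := I_h[\hat s_h\vn_h]$ with the same nodal director $\vn_h\in\Vh$, so $(\hat s_h,\hat\vu_h)$ satisfies \eqref{discrete-structure} and the Dirichlet conditions by the same boundary argument as in Step 1 (the interpolated data $g_h,\vr_h$ inherit the bounds). The inequality $E_2^h[\hat s_h]\le E_2^h[s_h]$ is immediate from $\psi(\beta(s_h(x_i)))\le\psi(s_h(x_i))$ at each node together with positivity of the quadrature/exact integral of a piecewise-smooth function — more precisely, by comparing $\int_\Omega\psi(\hat s_h)$ and $\int_\Omega\psi(s_h)$ elementwise and using that on each $T$ the function $\beta\circ s_h$ takes values between the nodal ones. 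For $E_1^h$, the key is that the nodal truncation is still a nodewise contraction: $|\delta_{ij}\hat s_h| = |\beta(s_h(x_i))-\beta(s_h(x_j))|\le|s_h(x_i)-s_h(x_j)| = |\delta_{ij}s_h|$ and $\hat s_h(x_i)^2\le s_h(x_i)^2$ at every node; plugging these into the explicit formula \eqref{discrete_energy_E1} and using $k_{ij}\ge0$ for $i\ne j$ termwise gives $E_1^h[\hat s_h,\vn_h]\le E_1^h[s_h,\vn_h]$. \textbf{The main obstacle} is purely bookkeeping at the boundary: one must check that the Dirichlet data (continuous $g,\vr$ and interpolated $g_h,\vr_h$) are genuinely untouched by $\beta$, which hinges on reading off from \eqref{Dirichlet-restrict} that $g$ and the components of $\vr$ along the director already lie in $[-\tfrac12+\delta_0,1-\delta_0]$ — and in the discrete case on noting that Lagrange interpolation of data valued in an interval stays in that interval, so $g_h,\vr_h$ satisfy the same bounds. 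Everything else is the pointwise/nodewise contraction property of a $1$-Lipschitz cutoff, applied either under the integral sign or directly in the finite sums \eqref{discrete_energy_E1}.
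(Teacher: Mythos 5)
Your Steps 1--2 are essentially the paper's own proof of the continuous case: the truncation is a $1$-Lipschitz map fixing $[-\tfrac12+\delta_0,1-\delta_0]$, hence $|\nabla\hat s|\le|\nabla s|$ a.e.\ (the paper phrases this as $\nabla\hat s=\chi_{\Omega_0}\nabla s$) and $|\hat s|\le|s|$; the Dirichlet data are untouched because of \eqref{Dirichlet-restrict}; and $\psi(\hat s)\le\psi(s)$ pointwise by \eqref{potential}. The extra invocation of the orthogonal splitting is harmless but unnecessary, since $E_1[\hat s,\vn]$ is by definition $\int_\Omega(\kappa|\nabla\hat s|^2+\hat s^2|\nabla\vn|^2)$. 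This part is correct and matches the paper.

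The gap is in Step 3, in the discrete $E_2$ inequality (a case the paper asserts but does not prove; your $E_1^h$ argument via nodewise contraction in \eqref{discrete_energy_E1} and $k_{ij}\ge0$ is the right one). The discrete energy \eqref{discrete_energy_E2} is the \emph{exact} integral $\int_\Omega\psi(s_h(x))\,dx$, not a nodal quadrature, and the nodewise truncation produces $\hat s_h=I_h(\beta\circ s_h)$, which differs from $\beta\circ s_h$ in the interior of elements. So the nodal inequality $\psi(\hat s_h(x_i))\le\psi(s_h(x_i))$ does not transfer: at a point with barycentric coordinates $\lambda_i$ you must compare $\psi\big(\sum_i\lambda_i\beta(s_h(x_i))\big)$ with $\psi\big(\sum_i\lambda_i s_h(x_i)\big)$, and since $\beta$ is neither convex nor concave and $\psi$ is not monotone, this can go the wrong way. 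For instance, on an edge with nodal values $1-\delta_0/2$ and $0$, at the interior point where $s_h=s^*$ one has $\psi(s_h)=0$ while $\hat s_h<s^*$ gives $\psi(\hat s_h)>0$. Your appeal to ``$\beta\circ s_h$ takes values between the nodal ones'' does not address this, because the obstruction is precisely that $\hat s_h\ne\beta\circ s_h$ off the nodes. Closing this step requires either an integrated (elementwise) argument using more structure of $\psi$, or replacing $E_2^h$ by a nodal quadrature (mass lumping), for which your nodal comparison would indeed be immediate.
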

\begin{proof}
The fact that $(\hat{s},\hat{\vu})$ satisfy the Dirichlet boundary
conditions is a consequence of \eqref{Dirichlet-restrict}. Moreover,
$(\hat{s},\hat{\vu})\in [H^1(\Omega)]^{d+1}$ and the structural
property \eqref{structure} holds by construction, whence
$(\hat{s},\hat{\vu}) \in \mathbb{A}(g,\vr)$. We next observe that
\[
\nabla \hat{s} = \chi_{\Omega_0} \nabla s,
\qquad
\Omega_0:=\{x\in\Omega:-\frac{1}{2}+\delta_0 \le s(x) \le 1-\delta_0\};
\]
\cite[Ch. 5, Exercise 17]{Evans:book}. Consequently, we obtain
\[
E_1[\hat{s},\vn] = \int_\Omega \kappa |\nabla \hat{s}|^2
+ |\hat{s}|^2 |\nabla \vn|^2 \le
\int_\Omega \kappa |\nabla s|^2
+ |s|^2 |\nabla \vn|^2 = E_1[s,\vn],
\]
as well as
\[
E_2[\hat{s}] = \int_\Omega \psi(\hat{s}) \le \int_\Omega \psi(s) = E_2[s],
\]
because of \eqref{potential}. This concludes the proof.
\end{proof}

To construct a recovery sequence $(s_h,\vu_h)\in\mathbb{A}_h(g_h,\vr_h)$ we need
point values of $(s,\vu)$ and thus a regularization procedure
of functions in the admissible
class $\mathbb{A}(g,\vr)$. We must enforce both the structural
property \eqref{structure} and the Dirichlet boundary
conditions $s=g$ and $\vu=\vr$; neither one is guaranteed by convolution.
We are able to do this provided $\bdys=\bdyvu=\partial\Omega$
and the Dirichlet datum $g$ satisfies \eqref{gne0}.

\begin{proposition}[regularization of functions in $\mathbb{A}(g,\vr)$]
  \label{P:regularization}
Let $\bdys=\bdyvu=\partial\Omega$,
$(s,\vu) \in \mathbb{A}(g,\vr)$ and let $g$ satisfy \eqref{gne0}.
Given $\epsilon>0$ there exists a pair
$(s_\epsilon,\vu_\epsilon) \in \mathbb{A}(g,\vr)\cap [W^1_\infty(\Omega)]^{d+1}$
such that
\begin{equation}\label{H1convergence}
  \|(s,\vu) - (s_\epsilon,\vu_\epsilon)\|_{H^1(\Omega)} \le \epsilon,
\end{equation}
\begin{equation}\label{cut-off}
-\frac{1}{2} + \delta_0
\le s_\epsilon(x), \, \vu_\epsilon(x) \cdot\xi \le 1 - \delta_0
\quad\forall \, x\in\Omega, \, \xi \in \R^d, |\xi| = 1.
\end{equation}
\end{proposition}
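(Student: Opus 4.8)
The plan is to produce $(s_\epsilon,\vu_\epsilon)$ by a two-stage procedure: first mollify $(s,\vu)$ to gain $W^1_\infty$ regularity, then correct the boundary values and the pointwise bounds so that the pair lands back in $\mathbb{A}(g,\vr)$ while the structural condition $\vu = s\vn$ with $|\vn|=1$ is restored \emph{exactly}. The key observation that makes this feasible is the hypothesis \eqref{gne0}: since $g\ge\delta_0$ on $\partial\Omega$ and $(s,\vu)\in\mathbb{A}(g,\vr)$, the quotient $\vn = s^{-1}\vu = g^{-1}\vr$ is well-defined and $H^1$ in a neighborhood $U$ of $\partial\Omega$ (as already noted after \eqref{gne0}), so near the boundary we are dealing with a genuine $\mathbb{S}^{d-1}$-valued $H^1$ map rather than a field that may degenerate.

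First I would fix a cut-off $\eta\in C^\infty_c(\Omega)$ with $\eta\equiv1$ outside a thin collar of $\partial\Omega$ contained in $U$, and write $s = \eta s + (1-\eta)s$; on the support of $1-\eta$ we have $s=g$ and $\vu=\vr$ are already Lipschitz (they are restrictions of $W^1_\infty(\R^d)$ data by hypothesis), so there is nothing to mollify there. On the set where $\eta>0$ I would mollify $s$ at scale $\rho\to0$ to get $s^\rho$, and separately mollify $\vn$ (which is $H^1$ and $\mathbb{S}^{d-1}$-valued near $\partial\Omega$, and on the interior bulk I would instead mollify $\vu$ directly since $\vn$ need not be defined there). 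The technically cleanest route is: interpolate between the Lipschitz boundary data and the mollified interior data using $\eta$, defining $s_\rho := \eta s^\rho + (1-\eta)g$ and $\vu_\rho := \eta \vu^\rho + (1-\eta)\vr$. Standard mollification estimates give $\|(s,\vu)-(s_\rho,\vu_\rho)\|_{H^1(\Omega)}\to0$ as $\rho\to0$, using that the commutator terms $(\nabla\eta)(s^\rho-s)$ etc.\ tend to zero in $L^2$.

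The main obstacle is that this $(s_\rho,\vu_\rho)$ need not satisfy the structural condition, since mollification destroys the pointwise identity $\vu=s\vn$ and the constraint $|\vn|=1$, and need not satisfy the one-sided bounds in \eqref{cut-off}. To fix the bounds I would apply the nodewise-type truncation of Lemma~\ref{L:truncation} — here the continuous truncation $\hat{s}_\rho = \min\{1-\delta_0,\max(-\tfrac12+\delta_0,s_\rho)\}$, which is still Lipschitz, preserves the Dirichlet data by \eqref{Dirichlet-restrict}, and only decreases the relevant quantities; this forces $s_\epsilon := \hat s_\rho$ into $[-\tfrac12+\delta_0,\,1-\delta_0]$. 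To restore the structural condition exactly I would then \emph{renormalize}: near $\partial\Omega$ set $\vn_\epsilon := \vn^\rho/|\vn^\rho|$ (legitimate because $\vn^\rho$ is Lipschitz and, for $\rho$ small, bounded away from $0$ since $\vn$ is $\mathbb{S}^{d-1}$-valued), extend $\vn_\epsilon$ into the interior as any Lipschitz $\mathbb{S}^{d-1}$-valued field agreeing with it near the boundary, and finally \emph{define} $\vu_\epsilon := s_\epsilon\,\vn_\epsilon$. This pair satisfies \eqref{structure} by construction and the Dirichlet conditions (near $\partial\Omega$, $s_\epsilon = g$ and $\vn_\epsilon = g^{-1}\vr$, so $\vu_\epsilon = \vr$), and since $|\vn_\epsilon|=1$ and $\vn_\epsilon$ is fixed Lipschitz, $\vu_\epsilon$ is Lipschitz with $\|\vu - \vu_\epsilon\|_{H^1}$ controlled by $\|s-s_\epsilon\|_{H^1}$ plus $\|\vn - \vn_\epsilon\|_{H^1(U)}$, both of which are $O(\rho)$-small by the mollification and renormalization estimates. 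Choosing $\rho$ small enough that this total is $\le\epsilon$ gives \eqref{H1convergence}, and \eqref{cut-off} holds because $|\vu_\epsilon\cdot\xi| \le |s_\epsilon| \le 1-\delta_0$ (and the lower bound follows likewise from $s_\epsilon\ge-\tfrac12+\delta_0$). The one delicate point to check carefully is that the renormalization of $\vn^\rho$ converges in $H^1(U)$ to $\vn$ — this is a standard but not entirely trivial fact about mollified Sobolev maps into a sphere, relying on $|\vn^\rho|\to1$ uniformly on $U$ together with $\nabla\vn^\rho\to\nabla\vn$ in $L^2(U)$.
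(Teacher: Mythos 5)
There is a genuine gap in the interior of $\Omega$, and it is exactly the part of the proposition that is hard. Your final construction sets $\vu_\epsilon := s_\epsilon\,\vn_\epsilon$ where, away from the boundary collar, $\vn_\epsilon$ is ``any Lipschitz $\mathbb{S}^{d-1}$-valued field agreeing with it near the boundary.'' With $\vn_\epsilon$ arbitrary in the bulk there is no control whatsoever on $\|\vu-\vu_\epsilon\|_{H^1(\Omega)}$: already for $s\equiv s^*$ constant and $\vn$ smooth, $\vu_\epsilon=s^*\vn_\epsilon$ is nowhere near $\vu=s^*\vn$ unless $\vn_\epsilon$ is forced to be close to $\vn$ throughout $\{s\neq0\}$. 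But you cannot obtain such a $\vn_\epsilon$ by mollifying and renormalizing $\vn$, because $\vn$ is not $H^1$ across the singular set $\mathcal{S}=\{s=0\}$, and you cannot obtain it by normalizing a mollification of $\vu$, because $|\vu^\rho|$ degenerates near $\mathcal{S}$. Your earlier remark that in the bulk you would ``mollify $\vu$ directly'' is inconsistent with the final definition $\vu_\epsilon=s_\epsilon\vn_\epsilon$, and the two are never reconciled. A secondary issue: even near $\partial\Omega$, the claim that $|\vn^\rho|$ stays bounded away from $0$ (and that $|\vn^\rho|\to1$ uniformly) is not automatic for a merely $H^1$ sphere-valued map — mollifications of $x/|x|$ vanish at the defect — so this step needs the additional observation that $\vn=g^{-1}\vr$ is actually Lipschitz on $\partial\Omega$ and some argument propagating regularity into a collar.

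The paper circumvents both difficulties by never constructing a regularized director field at all. After the boundary-layer blending (your Step 1 is essentially the paper's Step 1, with $d_\delta(x)=\min\{\delta^{-1}\dist(x,\partial\Omega),1\}$ in place of your fixed cutoff $\eta$), it introduces a second scale $\sigma>\delta$ and the Lipschitz sign approximation $\rho_\sigma(t)=\min\{1,\max\{-1,t/\sigma\}\}$, and sets $s_{\sigma,\delta}:=\rho_\sigma(s_\delta)\,|\vu_\delta|$, $\vu_{\sigma,\delta}:=|\rho_\sigma(s_\delta)|\,\vu_\delta$. The structural condition $|s_{\sigma,\delta}|=|\vu_{\sigma,\delta}|$ then holds identically by construction, and the $H^1$ error committed in passing from $(s,\vu)$ to $(\rho_\sigma(s)|s|,|\rho_\sigma(s)|\vu)$ is supported on $\{|s|<\sigma\}$, where it is controlled by $\|\nabla s\|_{L^2(\{|s|<\sigma\})}\to0$ and the fact that $\nabla s=0$ a.e.\ on $\{s=0\}$. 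Some device of this kind — an exact algebraic restoration of $|\vu_\epsilon|=|s_\epsilon|$ whose cost is concentrated near the degenerate set — is what your proposal is missing.
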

\begin{proof}
We construct a two-scale approximation with scales $\delta<\sigma$,
which satisfies the boundary conditions exactly.
We split the argument into several steps.

\medskip

Step 1: {\it Regularization with Dirichlet condition}.
Extend $s-g\in H^1_0(\Omega)$ by zero to $\R^d \setminus \Omega$.
Let $\eta_\delta$ be a smooth and non-negative mollifier with support
contained in the ball $B_\delta(0)$ centered at $0$ with radius $\delta$.
Define $d_\delta: \R^d \to \R$ by
\[
d_\delta (x) := \chi_{\Omega}(x) \min\{ \delta^{-1}\dist(x, \partial \Omega) , 1 \},
\]
which is Lipschitz in $\R^d$, and observe that $\nabla d_\delta$
is supported in the boundary layer
\[
\omega_\delta := \{x\in\Omega: \dist(x,\partial\Omega) \le \delta \},
\]
and $|\nabla d_\delta| = \delta^{-1}\chi_{\omega_\delta}$.
We consider the Lipschitz approximations of $(s,\vu)$ given by
\[
s_\delta := d_\delta \, (s * \eta_\delta) + \big(1- d_\delta\big) g,
\qquad
\vu_\delta := d_\delta \, (\vu * \eta_\delta) + \big(1- d_\delta\big) \vr.
\]
Since $d_\delta$ vanishes on $\partial\Omega$ we readily see that
$(s_\delta,\vu_\delta)=(g,\vr)$ on  $\partial\Omega$. Moreover,
the following properties are valid
\begin{equation}\label{delta-to0}
s_\delta \to s, \quad
\vu_\delta \to \vu, \quad |\vu_\delta| \to |\vu|
\quad \text{a.e. and in } H^1(\Omega).
\end{equation}
The last property is a consequence of the middle one via triangle
inequality, and the first two are similar.
It thus suffices to show the first property for $s$. We simply write
\[
\nabla (s_\delta - s) =
\nabla d_\delta (s-g)*\eta_\delta +  \nabla d_\delta \, \big(g*\eta_\delta-g \big)
+ d_\delta \nabla(s*\eta_\delta -s)
+ \big(d_\delta-1\big) \,  \nabla (s-g).
\]
Since $s-g \in H^1(\omega_{\delta})$, and $s-g = 0$ on
$\partial \Omega$, we apply
Poincar\'e's inequality to deduce
\[
\|s-g\|_{L^2(\omega_{\delta})} \le C \delta \|\nabla (s-g) \|_{L^2(\omega_{\delta})},
\]
whence
\[
\|\nabla d_\delta (s-g)*\eta_\delta\|_{L^2(\Omega)} \le C \delta^{-1}
\|s-g\|_{L^2(\omega_{\delta})} \le C
\|\nabla (s-g) \|_{L^2(\omega_{\delta})} \to 0
~ \text{ as } \delta\to0.
\]
Likewise, a similar argument gives for the fourth term
\[
\big\|\big(d_\delta-1\big) \,  \nabla (s-g)\big\|_{L^2(\Omega)}
\le C \|\nabla (s-g) \|_{L^2(\omega_\delta)} \to 0
\quad\text{ as } \delta\to0.
\]
On the other hand, the estimate
$\|g*\eta_\delta - g\|_{L^\infty(\Omega)} \leq \delta \| \nabla g \|_{L^\infty(\R^d)}$ yields
\[
\|g*\eta_\delta - g\|_{L^2(\omega_\delta)} \le
|\omega_\delta|^{1/2} \delta \|\nabla g\|_{L^\infty(\R^d)} \le
C \delta^{\frac{3}{2}} \|\nabla g\|_{L^\infty(\R^d)},
\]
which implies, for the second term above,
\[
\big\| \nabla d_\delta \, \big(g*\eta_\delta-g \big)\big\|_{L^2(\Omega)}
\le C \delta^{\frac{1}{2}} \|\nabla g\|_{L^\infty(\R^d)}.
\]
Finally, for the third term we recall that $s\in H^1(\R^d)$ equals $g$ outside
$\Omega$ and exploit the convergence
$\nabla s*\eta_\delta \to \nabla s$ in $L^2(\Omega)$ to obtain
\[
\| d_\delta \nabla(s*\eta_\delta -s) \|_{L^2(\Omega)} \to 0
\quad\text{ as } \delta\to0.
\]

Step 2: {\it Structural condition}.
The pair $(s_\delta,\vu_\delta)$ does not satisfy the
structural condition \eqref{structure} unfortunately.
We now construct a closely related
pair that satisfies \eqref{structure}.
Recall that $(g,\vr)\in [W^1_\infty(\R^d)]^{d+1}$ satisfy
the bounds \eqref{Dirichlet-restrict} in $\R^d$, whence so do
the extensions of $(s,\vu)$ because $s=g$, $\vu=\vr$ outside $\Omega$.
Thus, we can show that
\[
-\frac{1}{2} +  \delta_0 \le s_\delta(x), \, \vu_\delta(x)\cdot\xi \le 1 - \delta_0
\quad\forall \, x\in\Omega, \, \xi \in \R^d, \, |\xi|=1;
\]
we only argue with $s_\delta$ because dealing with
$\vu_\delta\cdot\xi$ is similar. We have
$a:=-\frac{1}{2} + \delta_0 \le s*\eta \le 1 - \delta_0=:b$ because
$\eta_\delta\ge0$ and the convolution preserves constants, whence
\[
s_\delta \le d_\delta b + (1-d_\delta) b = b,
\quad
s_\delta \ge d_\delta a + (1-d_\delta) a = a
\quad\text{ in } \Omega.
\]
We next introduce the second parameter
$\sigma>\delta$ and the Lipschitz approximation of the sign function
\[
\rho_{\sigma}(t) = \min \big\{1, \max \{ -1, t/\sigma\} \big\},
\]
along with the two-scale approximation of $(s,\vu)$
\[
s_{\sigma,\delta} := \rho_\sigma(s_\delta) |\vu_\delta|,
\qquad
\vu_{\sigma,\delta} := |\rho_\sigma(s_\delta)| \vu_\delta.
\]
We note that $|s_{\sigma,\delta}| = |\vu_{\sigma,\delta}|$ by
construction, whence \eqref{structure} holds, and
$(s_{\sigma,\delta},\vu_{\sigma,\delta}) = (g,\vr)$ on
$\partial\Omega$ because $\rho_\sigma(s_\delta)=1$ on $\partial\Omega$,
for $\sigma \le \delta_0$, according to \eqref{gne0};
hence $(s_{\sigma,\delta},\vu_{\sigma,\delta})\in\mathbb{A}(g,\vr)
\cap [W^1_\infty(\Omega)]^{d+1}$. It remains to
show how to choose $\delta$ and $\sigma$, which we do next.

\medskip

Step 3: {\it Convergence in $H^1$ as $\delta\to0$}. In view of
\eqref{delta-to0} we readily deduce that
\[
s_{\sigma,\delta} \to s_\sigma:=\rho_\sigma(s) |s|,
\qquad
\vu_{\sigma,\delta} \to \vu_\sigma := |\rho_\sigma(s)| \vu
\qquad
\text{a.e. and in } L^2(\Omega).
\]
We now prove convergence also in $H^1(\Omega)$. Since
$\nabla \rho_{\sigma}(s_{\delta}) = \sigma^{-1} \chi_{\{|s_{\delta}| \leq \sigma\}}
\nabla s_{\delta}$ we get
\[
\nabla \rho_{\sigma}(s_{\delta}) - \nabla \rho_{\sigma}(s)
= \sigma^{-1} \big(\chi_{\{|s_{\delta}|\leq \sigma\}} -
\chi_{\{|s|\leq \sigma\}}\big)\nabla s
+ \sigma^{-1} \chi_{\{|s_{\delta}|\leq \sigma\}} \big(\nabla s_{\delta} - \nabla s \big).
\]
Applying the Lebesgue dominated
convergence theorem for the first term and \eqref{delta-to0} for the
second term yields, as $\delta\to0$,
\begin{equation}\label{rho}
\nabla \rho_\sigma(s_\delta) \to \nabla \rho_\sigma(s),
\qquad
\nabla |\rho_\sigma(s_\delta)| \to \nabla |\rho_\sigma(s)|
\qquad\text{ in } L^2(\Omega).
\end{equation}
The second convergence result is due to the fact that
$\nabla |f|= \sign_0(f) \nabla f$
for any $f\in W^1_1(\Omega)$, where $\sign_0(f)$ is the sign function
that vanishes at $0$ \cite[Ch. 5, Exercise 17]{Evans:book}. We next write
\begin{equation*}
\begin{aligned}
\nabla(s_{\sigma,\delta}-s_\sigma) &= \nabla \big(\rho_\sigma(s_\delta)-\rho_\sigma(s)\big)
\big(|\vu_\delta| - |\vu|  \big)
+ \nabla \rho_\sigma(s) \big(|\vu_\delta| - |\vu|  \big)
\\
& + \rho_\sigma(s_\delta) \nabla(|\vu_\delta|-|\vu|  \big)
+ \nabla\big(\rho_\sigma(s_\delta) - \rho_\sigma(s) \big) |\vu|
+ \big( \rho_\sigma(s_\delta) - \rho_\sigma(s) \big) \nabla |\vu|,
\end{aligned}
\end{equation*}
and infer that $\nabla(s_{\sigma,\delta}-s_\sigma)\to0$ as
$\delta\to0$ in $L^2(\Omega)$
upon using again the Lebesgue dominated convergence theorem for the second
and fifth terms together with \eqref{delta-to0}, \eqref{rho}, and
$|\vu|, |\vu_\delta|, |\rho_\sigma(s_\delta)| \le 1$ for the other terms.

\medskip

Step 4: {\it Convergence in $H^1$ as $\sigma\to0$}. It remains to prove
\[
s_\sigma = \rho_\sigma(s)|s| \to s,
\qquad
\vu_\sigma = |\rho_\sigma(s)| \vu \to \vu
\qquad\text{ in } H^1(\Omega).
\]
To this end, we use again that $\nabla |s| = \sign_0(s)\nabla s$ and write
\[
\nabla (s_\sigma - s) = \nabla\rho_\sigma(s) |s|
+ \big(\rho_\sigma(s) \, \sign_0(s) -1 \big) \nabla s.
\]
Since $\nabla\rho_\sigma(s) = \sigma^{-1} \chi_{\{|s|<\sigma\}}\nabla s$,
we readily obtain as $\sigma\to0$
\[
\|\nabla\rho_\sigma(s) \, |s| \, \|_{L^2(\Omega)}
\le \|\nabla s\|_{L^2(\{|s|<\sigma\})} \to 0.
\]
On the other hand, $\rho_\sigma(t)\to\sign_0(t)$ for all
$t\in\mathbb{R}$, whence
\[
\|\big(\rho_\sigma(s) \, \sign_0(s) -1 \big) \nabla s\|_{L^2(\Omega)}
\to \|\chi_{\{s=0\}} \nabla s\|_{L^2(\Omega)} = 0
\quad\textrm{ as } \sigma\to0
\]
because $\chi_{\{s=0\}} \nabla s =0$ a.e. in $\Omega$
\cite[Ch. 5, Exercise 17]{Evans:book}. Recalling that
$|\vu|=|s|$ a.e. in $\Omega$, and thus $\chi_{\{\vu=0\}} = \chi_{\{s=0\}}$,
a similar argument shows that $\vu_\sigma\to\vu$.

\medskip

Step 5: {\it Choice of $\sigma$ and $\delta$}.
Given $\epsilon>0$, we first use Step 4 to choose $\sigma$ such that
\[
\|(s,\vu) - (s_\sigma,\vu_\sigma)\|_{H^1(\Omega)} \le \frac{\epsilon}{2}.
\]
We finally resort to Step 3
to select $\delta<\sigma$, depending on $\sigma$, such that
\[
\|(s_\sigma,\vu_\sigma) - (s_{\delta,\sigma},\vu_{\delta,\sigma})\|_{H^1(\Omega)}
\le \frac{\epsilon}{2}.
\]
Therefore, we obtain the desired regularized pair, i.e. $s_\epsilon := s_{\delta,\sigma}$,
$\vu_\epsilon := \vu_{\delta,\sigma}$ satisfies
$(s_\epsilon,\vu_\epsilon) \in \mathbb{A}(g,\vr)\cap [W^1_\infty(\Omega)]^{d+1}$
along with \eqref{H1convergence} and \eqref{cut-off}. The proof is complete.
\end{proof}

We now fix $\epsilon>0$ and
let $(s_{\epsilon,h},\vu_{\epsilon,h})\in\mathbb{X}_h$
be the Lagrange interpolants of
$(s_\epsilon,\vu_\epsilon)\in\mathbb{A}(g,\vr)$
given in Proposition \ref{P:regularization}, which are well defined
because $(s_\epsilon,\vu_\epsilon)\in [W^1_\infty(\Omega)]^{d+1}$ and
satisfy $(s_{\epsilon,h},\vu_{\epsilon,h}) = (g_h,\vr_h)$ on $\partial\Omega$.
For any node $x_i$, we set
\[
  \vn_{\epsilon,h}(x_i) =
  \begin{cases}
    \vu_{\epsilon,h}(x_i) / s_{\epsilon,h}(x_i)     \quad & \text{if $s_{\epsilon,h}(x_i) \neq 0$}
    \\
    \text{any unit vector}                  \quad & \text{otherwise,}
  \end{cases}
\]
and observe that \eqref{discrete-structure} holds whence
$(s_{\epsilon,h},\vu_{\epsilon,h})\in\mathbb{A}_h(g_h,\vr_h)$.
In view of the
energy identity \eqref{energyequality}, and the property
$\|(s_{\epsilon,h},\vu_{\epsilon,h}) - (s_\epsilon,\vu_\epsilon)\|_{H^1(\Omega)}\to0$
as $h\to0$, to show \eqref{limsup} it suffices to prove that the
consistency term satisfies
\begin{align}\label{residual}
C_1^h[s_{\epsilon,h},\vn_{\epsilon,h}]
:= \sum_{i, j = 1}^N k_{ij} \big(\dij s_{\epsilon,h} \big)^2
\big| \dij \vn_{\epsilon,h} \big|^2
\rightarrow 0, \quad \text{ as $h \rightarrow 0$. }
\end{align}
Heuristically, if $\vn_\epsilon=\vu_\epsilon/s_\epsilon$ is in $W^1_{\infty}(\Omega)$, then the sum
\eqref{residual} would be
of order $h^2 \int_{\Omega} | \nabla s_{\epsilon,h} |^2 dx$, which obviously
converges to zero. However, such an argument fails if the director
field $\vn_\epsilon$ lacks high regularity, which is the case with defects.
Since $\vn_\epsilon$ is not
regular in general when $s_\epsilon$ vanishes,
the proof of consistency requires a separate
treatment of the region where $\vn_\epsilon$ is regular and the region where
$\vn_\epsilon$ is singular. The heuristic argument carries over in the
regular region, while in the singular region we appeal to basic
measure theory.
With this motivation in mind, we now prove the following lemma.
\smallskip
\begin{lemma}[lim-sup inequality]\label{lemma:limsup}
Let
$(s_\epsilon, \vu_\epsilon) \in \mathbb{A}(g,\vr) \cap [W^{1}_\infty(\Omega)]^{d+1}$ be
the functions constructed in Proposition \ref{P:regularization}, for
any $\epsilon>0$, and let
$(s_{\epsilon,h},\vu_{\epsilon,h}) \in \mathbb{A}_h(g_h,\vr_h)$
be their Lagrange interpolants. Then
\[
E_1[s_\epsilon, \vn_\epsilon] =
\lim_{h \to 0} E_1^h [s_{\epsilon,h}, \vn_{\epsilon,h}] =
\lim_{h \to 0} \widetilde{E}_1^h [s_{\epsilon,h}, \vu_{\epsilon,h}] =
\widetilde{E}_1[s_\epsilon, \vu_\epsilon].
\]
\end{lemma}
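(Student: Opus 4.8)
The plan is to prove the chain of equalities by reading off the energy identity \eqref{energyequality} and showing that only the consistency term $C_1^h$ in \eqref{residual} requires work; everything else follows from standard interpolation estimates. First I would note that since $(s_\epsilon,\vu_\epsilon)\in[W^1_\infty(\Omega)]^{d+1}$, the Lagrange interpolants satisfy $(s_{\epsilon,h},\vu_{\epsilon,h})\to(s_\epsilon,\vu_\epsilon)$ in $H^1(\Omega)$ as $h\to0$, so that $\iO|\nabla s_{\epsilon,h}|^2\to\iO|\nabla s_\epsilon|^2$ and $\iO|\nabla\vu_{\epsilon,h}|^2\to\iO|\nabla\vu_\epsilon|^2$; also $E_2^h[s_{\epsilon,h}]\to E_2[s_\epsilon]$ by dominated convergence since $\psi$ is continuous and the arguments stay in a compact subinterval of $(-1/2,1)$ by \eqref{cut-off}. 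In view of the identity \eqref{energyequality}, namely $E_1^h[s_{\epsilon,h},\vn_{\epsilon,h}] = \widetilde E_1^h[s_{\epsilon,h},\vu_{\epsilon,h}] + C_1^h[s_{\epsilon,h},\vn_{\epsilon,h}]$, and the fact that $\widetilde E_1^h[s_{\epsilon,h},\vu_{\epsilon,h}]\to\widetilde E_1[s_\epsilon,\vu_\epsilon] = E_1[s_\epsilon,\vn_\epsilon]$ (the last equality being \eqref{auxiliary_energy_identity}), the whole lemma reduces to showing $C_1^h[s_{\epsilon,h},\vn_{\epsilon,h}]\to0$.

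To bound $C_1^h$, I would split the mesh into the ``regular'' and ``singular'' regions relative to where $\vn_\epsilon$ is well-behaved. Fix a small threshold $\tau>0$ and let $\Omega_\tau^h$ be the union of stars $\omega_i$ of all nodes $x_i$ with $|s_{\epsilon,h}(x_i)|\le\tau$ (plus neighbors), and $\Omega\setminus\Omega_\tau^h$ its complement. On the complement, $s_\epsilon$ is bounded away from $0$, so $\vn_\epsilon=\vu_\epsilon/s_\epsilon$ is Lipschitz there with constant controlled by $\tau^{-1}$ times the $W^1_\infty$ norms of $s_\epsilon,\vu_\epsilon$; hence $|\dij\vn_{\epsilon,h}|\le C_\tau h$ for edges in that region, giving a contribution to $C_1^h$ bounded by $C_\tau h^2\sum k_{ij}(\dij s_{\epsilon,h})^2 = 2 C_\tau h^2\|\nabla s_{\epsilon,h}\|_{L^2}^2\to0$. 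On the singular region, I would use the crude bound $|\dij\vn_{\epsilon,h}|^2\le 4$ (difference of unit vectors) and estimate $C_1^h\big|_{\rm sing}\le 4\sum_{\rm sing}k_{ij}(\dij s_{\epsilon,h})^2 \le 8\|\nabla s_{\epsilon,h}\|_{L^2(\Omega_\tau^h)}^2$, noting $\sum_{i\ne j}k_{ij}(\dij s_{\epsilon,h})^2$ localized to a region is comparable to the Dirichlet energy of $s_{\epsilon,h}$ there because $k_{ij}\ge0$. Since $s_\epsilon\in W^1_\infty$ and $|\{|s_\epsilon|\le\tau\}|\to0$ as $\tau\to0$ (or rather, $\|\nabla s_\epsilon\|_{L^2(\{|s_\epsilon|\le\tau\})}$ is small — in fact one can use that $\nabla s_\epsilon=0$ a.e. on $\{s_\epsilon=0\}$ and absolute continuity of the integral), the region $\Omega_\tau^h$ has measure $\to0$ uniformly in $h$ as $\tau\to0$, so $\|\nabla s_{\epsilon,h}\|_{L^2(\Omega_\tau^h)}^2$ can be made arbitrarily small by first choosing $\tau$ small, using the $L^\infty$ bound on $\nabla s_{\epsilon,h}$.

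The argument is then completed by the usual $\epsilon/2$ splitting: given $\eta>0$, pick $\tau$ so that the singular contribution is below $\eta/2$ for all small $h$; then with $\tau$ fixed, pick $h$ small so the regular contribution $C_\tau h^2\|\nabla s_{\epsilon,h}\|_{L^2}^2$ is below $\eta/2$. The main obstacle — and the reason the proof is not entirely routine — is the singular region: one must control the Dirichlet energy of $s_{\epsilon,h}$ on the $h$-dependent neighborhood $\Omega_\tau^h$ of $\{|s_{\epsilon,h}|\le\tau\}$ uniformly in $h$, which requires care relating the nodal set $\{|s_{\epsilon,h}(x_i)|\le\tau\}$ back to the continuous set $\{|s_\epsilon|\le C\tau\}$ (using $\|s_\epsilon-s_{\epsilon,h}\|_{L^\infty}\to0$ and the Lipschitz bound on $s_\epsilon$ to absorb the interpolation error into a slightly larger threshold), together with the fact that $\|\nabla s_\epsilon\|_{L^2}$ restricted to a set of small measure is small because $s_\epsilon\in W^1_\infty$. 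I do not expect the regular region, the convergence of $\widetilde E_1^h$, or the $E_2^h$ term to present any difficulty beyond standard finite element interpolation theory and dominated convergence.
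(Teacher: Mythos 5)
Your overall strategy is the same as the paper's: reduce everything to $C_1^h[s_{\epsilon,h},\vn_{\epsilon,h}]\to0$ via the identity \eqref{energyequality}, split the sum into a region where $\vn_\epsilon=s_\epsilon^{-1}\vu_\epsilon$ is Lipschitz and a region near the zero set of $s_\epsilon$, use $|\dij\vn_{\epsilon,h}|\le 2$ on the latter, and take $h\to0$ before the threshold $\tau\to0$. The reduction, the regular-region estimate, and the order of limits are all fine. The gap is in how you close the singular-region estimate. You assert that $|\{|s_\epsilon|\le\tau\}|\to0$ as $\tau\to0$, and hence that $\Omega_\tau^h$ has small measure uniformly in $h$, so that the $L^\infty$ bound on $\nabla s_{\epsilon,h}$ finishes the job. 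This is false in general: as $\tau\to0$ the measure of $\{|s_\epsilon|\le\tau\}$ decreases only to $|\{s_\epsilon=0\}|$, and the zero set of $s_\epsilon$ may well have positive measure --- allowing a non-null singular set is precisely the point of Ericksen's model. Consequently the bound $\|\nabla s_{\epsilon,h}\|_{L^2(\Omega_\tau^h)}^2\le\|\nabla s_{\epsilon,h}\|_{L^\infty}^2\,|\Omega_\tau^h|$ does not become small by choosing $\tau$ small.

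The correct mechanism, which you mention only in a parenthetical and then do not actually use, is that $\nabla s_\epsilon=\vzero$ a.e.\ on $\{s_\epsilon=0\}$, so that $\|\nabla s_\epsilon\|_{L^2(\{|s_\epsilon|\le2\tau\})}\to0$ as $\tau\to0$ by dominated convergence \emph{even though the sets do not shrink to a null set}. To exploit this you must first transfer the estimate from the discrete gradient to the continuous one on an $h$-independent set: for $h$ small the region $\Omega_\tau^h$ is contained in $\{|s_\epsilon|\le2\tau\}$ (as you note, using $\|s_\epsilon-s_{\epsilon,h}\|_{L^\infty}\to0$ and the Lipschitz bound), and then, e.g., $\|\nabla s_{\epsilon,h}\|_{L^2(\Omega_\tau^h)}\le\|\nabla(s_{\epsilon,h}-s_\epsilon)\|_{L^2(\Omega)}+\|\nabla s_\epsilon\|_{L^2(\{|s_\epsilon|\le2\tau\})}$, where the first term vanishes as $h\to0$ because $s_\epsilon\in W^1_\infty(\Omega)$. (The paper achieves the same transfer via H\"older and the $W^1_p$-stability of $I_h$ for $p>d$; either route works.) With this replacement for the measure argument, your proof closes.
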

\begin{proof}
Since $\epsilon$ is fixed, we simplify the notation and write $(s_h,\vn_h)$ instead of
$(s_{\epsilon,h},\vn_{\epsilon,h})$.
In order to prove that $C_1^h[s_h,\vn_h] \to 0$ in \eqref{residual}, we choose an arbitrary
$\delta>0$ and divide the domain $\Omega$ into two disjoint regions
\[
\mathcal{S}_{\delta} := \{ x \in \Omega: \; |s_\epsilon(x)| < \delta\},
\qquad
\mathcal{K}_{\delta} := \overline{\Omega} \setminus
\mathcal{S}_{\delta},
\]
and split $C_1^h[s_h,\vn_h]$ into two parts
\begin{align*}
\mathcal{I}_h(\mathcal{K}_{\delta}) :=
\sum_{x_i, x_j \in \mathcal{K}_{\delta}}
k_{ij} \big( \dij s_h \big)^2 \big| \dij \vn_h \big|^2,
\quad
\mathcal{I}_h(\mathcal{S}_{\delta}) :=
\sum_{\text{$x_i$ or $x_j \in \mathcal{S}_{\delta}$ }}
k_{ij} \big( \dij s_h \big)^2 \big| \dij \vn_h \big|^2.
\end{align*}

{\it Step 1: Estimate on $\mathcal{K}_{\delta}$}.
Since both $s_\epsilon$ and $\vu_\epsilon$ are Lipschitz in $\overline{\Omega}$,
the set $\mathcal{K}_{\delta}$ is a compact set and the field
$\vn_\epsilon = s_\epsilon^{-1} \vu_\epsilon$ is also Lipschitz in
$\mathcal{K}_{\delta}$ with a constant that depends on
$\epsilon$ and $\delta$.
Therefore, $|\delta_{ij}\vn_h|=| \vn_h(x_i) - \vn_h(x_j) |\le C_{\epsilon,\delta} h$
because
$x_i$ and $x_j$ are connected by a single edge of the mesh,
whence
\begin{align*}
  \mathcal{I}_h(\mathcal{K}_{\delta})
\leq
C_{\epsilon,\delta} h^2 \sum_{i, j = 1 }^N k_{ij} (\dij s_h)^2
\to 0
\qquad\text{ as } h\to0,
\end{align*}
because $\frac12 \sum_{i, j = 1 }^N k_{ij} (\dij s_h)^2 = \|\nabla s_h\|_{L^2(\Omega)}^2
\le C\|\nabla s_\epsilon\|_{L^2(\Omega)}^2 < \infty$.

\medskip

{\it Step 2: Estimate on $\mathcal{S}_{\delta}$.}
If either $x_i$ or $x_j$ is in $\mathcal{S}_{\delta}$, without loss of generality, we assume that $x_i \in \mathcal{S}_{\delta}$.
Since $s_\epsilon$ is Lipschitz, and $s_h=I_hs_\epsilon$ is the Lagrange
interpolant of $s_\epsilon$, there is a mesh size $h$ such that
for any $x$ in the star $\omega_i$ of $x_i$,
$|s_h(x) - s_h(x_i)| \leq C_\epsilon h\le\delta$,
which implies that $\omega_i \subset \mathcal{S}_{2 \delta}$.
Since $|\dij \vn_h|\le 2$, we get
\begin{align*}
  \mathcal{I}_h(\mathcal{S}_{\delta}) \le
  4 \sum_{ \text{ $x_i$ or $x_j \in \mathcal{S}_{\delta}$}} k_{ij} (\dij s_h)^2
\leq
8 \int_{ \cup \, \omega_i } |\nabla s_h|^2 dx
\leq
8\int_{ \mathcal{S}_{2\delta} } |\nabla s_h|^2 dx,
\end{align*}
where the union $\cup \, \omega_i$ is taken over all nodes $x_i$ in
$\mathcal{S}_{\delta}$. If $d<p<\infty$, we infer that
\[
\int_{ \mathcal{S}_{2\delta} } |\nabla s_h|^2 dx
\le C \Big( \int_{ \mathcal{S}_{2\delta} } |\nabla I_h s_\epsilon|^p
dx \Big)^{\frac{2}{p}}
\to
C \Big( \int_{\mathcal{S}_{2\delta} } |\nabla s_\epsilon|^p dx \Big)^{\frac{2}{p}}
\quad\text{ as } h\to0,
\]
in view of the stability of the Lagrange interpolation operator $I_h$
in $W^1_p$ for $p>d$.

\medskip

{\it Step 3: The limit $\delta\to0$.}
Combining Steps 1 and 2 gives for all $\delta > 0$
\begin{equation*}
  \lim_{h \to 0} \sum_{i, j = 1}^N k_{ij} \big(\dij s_h\big)^2
  \big|\dij \vn_h \big|^2 \leq  C \Big( \int_{ \mathcal{S}_{2\delta} }
  |\nabla s_\epsilon|^p dx  \Big)^{\frac{2}{p}}
  = C \Big( \int_{\Omega } |\nabla s_\epsilon|^p \chi_{ \{|s_\epsilon| \leq
    2 \delta\}} dx  \Big)^{\frac{2}{p}}
\end{equation*}
where $\chi_A$ is the characteristic function of the set $A$.
By virtue of the Lebesgue dominated convergence theorem, we obtain
\[
 \lim_{\delta \to 0} \int_{\Omega } |\nabla s_\epsilon|^p \chi_{ \{|s_\epsilon| \leq 2 \delta\}} dx = \int_{\Omega } |\nabla s_\epsilon|^p \chi_{ \{s_\epsilon = 0\}} dx =
 0,
\]
because $\nabla s_\epsilon(x) = \vzero$ for a.e. $x\in\{s_\epsilon=0\}$
\cite[Ch. 5, Exercise 17]{Evans:book}. This proves the lemma.
\end{proof}

\subsection{Weak lower semi-continuity or lim-inf property}\label{S:lim-inf}

This property usually follows from convexity.
While it is obvious that the discrete energy $\widetilde E_1^h[s_h,\vu_h]$ in \eqref{energy_inequality} is convex with respect to $\nabla \vu_h$ and $\nabla s_h$ if $\kappa \geq 1$, the convexity is not clear if $0< \kappa < 1$.
It is worth mentioning that if $\kappa < 1$, the convexity of the continuous energy \eqref{auxiliary_energy_identity} is based on the fact that $|\vu| = |s|$ a.e. in $\Omega$ and hence the convex part $\int_{\Omega} |\nabla \vu|^2 dx$ controls the concave part $(\kappa -1) \int_{\Omega} |\nabla s|^2 dx$ \cite{Lin_CPAM1991}.
However, for the discrete energy \eqref{energy_inequality}, the
equality $|\vu_h| = |s_h|$ holds only at the vertices. Therefore, it
is not obvious how to establish the weak lower semi-continuity of
$\widetilde E_1^h[s_h,\vu_h]$.
This is why we exploit the nodal
relations $\widetilde{s}_h=|s_h| = |\vu_h| = |\widetilde{\vu}_h|$
to derive an alternative formula for $\widetilde E_1^h[\widetilde{s}_h,\widetilde\vu_h]$. Our next lemma hinges on  \eqref{abs_inequality} and makes the convexity of $\widetilde E_1^h[I_h|s_h|,\widetilde\vu_h]$ with respect to $\nabla \widetilde \vu_h$ completely explicit.
%
\begin{lemma}[weak lower semi-continuity]\label{lemma:wlsc}
The energy $\int_{\Om} L_h(\vw_h, \nabla \vw_h) dx$, with
\[
 L_h(\vw_h, \nabla \vw_h) := (\kappa - 1) | \nabla  I_h|\vw_h| |^2 + | \nabla  \vw_h |^2,
\]
is well defined for any $\vw_h \in \Uh$
and is weakly lower semi-continuous in $H^1(\Omega)$, i.e. for any
weakly convergent sequence $\vw_h \rightharpoonup \vw$ in the $H^1(\Omega)$
norm, we have
\[
  \liminf_{h \to 0} \int_{\Om} L_h(\vw_h, \nabla \vw_h) dx \geq  \int_{\Om} (\kappa -1) | \nabla |\vw| |^2 + | \nabla  \vw |^2 dx.
\]
\end{lemma}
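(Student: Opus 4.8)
The plan is to reduce the weak lower semicontinuity of the nonstandard functional $\int_\Omega L_h(\vw_h,\nabla\vw_h)\,dx$ to a standard convexity argument by first rewriting the ``concave correction'' term $(\kappa-1)|\nabla I_h|\vw_h||^2$ in a way that is controlled by $|\nabla\vw_h|^2$. The key observation — mirroring the continuous argument of Lin \cite{Lin_CPAM1991} but now done at the discrete level using the identity \eqref{eqn:dirichlet_integral_identity} — is that $I_h|\vw_h|$ has value $|\vw_h(x_i)|$ at each node, so that $\|\nabla I_h|\vw_h|\|_{L^2(\Omega)}^2 = \tfrac12\sum_{i,j}k_{ij}\big(|\vw_h(x_i)|-|\vw_h(x_j)|\big)^2$, and by the reverse triangle inequality $\big||\vw_h(x_i)|-|\vw_h(x_j)|\big|\le|\vw_h(x_i)-\vw_h(x_j)|$ together with $k_{ij}\ge0$ for $i\ne j$ (weak acuteness) this is bounded by $\|\nabla\vw_h\|_{L^2(\Omega)}^2$. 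Hence for $0<\kappa<1$ the integrand satisfies $L_h(\vw_h,\nabla\vw_h)\ge 0$ and, more importantly, can be written as a genuinely convex function of $\nabla\vw_h$ plus an error that is quadratic in the nodal jumps of $\vw_h$ of ``wrong sign'' but small.

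First I would make precise the pointwise (elementwise) structure: on each simplex $T$ the gradients $\nabla\vw_h|_T$ and $\nabla I_h|\vw_h||_T$ are constants determined linearly by the nodal values, and I would exhibit $L_h$ restricted to $T$ as a quadratic form in these constants. The goal of this step is to show $L_h$ is, elementwise, a convex function of $\nabla \vw_h$ alone once one accounts for the structural relation — precisely, to write
\[
\int_\Omega L_h(\vw_h,\nabla\vw_h)\,dx
= \kappa\,\|\nabla I_h|\vw_h|\|_{L^2(\Omega)}^2
+ \Big(\|\nabla\vw_h\|_{L^2(\Omega)}^2 - \|\nabla I_h|\vw_h|\|_{L^2(\Omega)}^2\Big),
\]
where the first term is nonnegative and convex in $|\vw_h|$, and the parenthesized term is nonnegative by the reverse-triangle/acuteness estimate above. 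Then I would handle the two terms separately under $\vw_h\rightharpoonup\vw$ in $H^1(\Omega)$.

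Next, the convergence step. From $\vw_h\rightharpoonup\vw$ in $H^1$ we get $\vw_h\to\vw$ strongly in $L^2$ and a.e.\ along a subsequence; I would argue that $I_h|\vw_h|\to|\vw|$ strongly in $L^2(\Omega)$ as well (the interpolation operator is $L^2$-stable on piecewise-linear data and $|\vw_h|\to|\vw|$). One then needs $I_h|\vw_h|\rightharpoonup|\vw|$ in $H^1(\Omega)$: boundedness of $\|\nabla I_h|\vw_h|\|_{L^2}$ follows from the acuteness estimate $\|\nabla I_h|\vw_h|\|_{L^2}\le\|\nabla\vw_h\|_{L^2}$, so a subsequence converges weakly in $H^1$, and the $L^2$ limit identifies the weak limit as $|\vw|$. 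Having secured both $\vw_h\rightharpoonup\vw$ and $I_h|\vw_h|\rightharpoonup|\vw|$ in $H^1(\Omega)$, the functional $\int_\Omega \kappa|\nabla v|^2 + |\nabla\vw|^2$ is convex in $(\nabla v,\nabla\vw)$ (a sum of squared $L^2$-norms, $\kappa>0$), so by standard weak lower semicontinuity of convex integrands,
\[
\liminf_{h\to0}\int_\Omega \Big(\kappa|\nabla I_h|\vw_h||^2 + |\nabla\vw_h|^2\Big)\,dx
\ \ge\ \int_\Omega \Big(\kappa|\nabla|\vw||^2 + |\nabla\vw|^2\Big)\,dx,
\]
and subtracting $\liminf$ versus the needed identity and regrouping as $(\kappa-1)|\nabla|\vw||^2 + |\nabla\vw|^2 = \kappa|\nabla|\vw||^2 + |\nabla\vw|^2 - |\nabla|\vw||^2$ — using that $\int|\nabla I_h|\vw_h||^2 \to \int |\nabla |\vw||^2$ cannot be assumed, only $\ge$ — forces a slightly more careful accounting, which is the delicate point.

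The main obstacle is precisely this last regrouping: one must not lose the favorable sign when passing the $-|\nabla I_h|\vw_h||^2$ (equivalently the $(\kappa-1)$) term to the limit. The clean way, which I would adopt, is \emph{not} to split off $\kappa$ but to observe that the integrand $(\kappa-1)|\nabla I_h|\vw_h||^2 + |\nabla\vw_h|^2$ is itself jointly convex in the pair of gradients $(\nabla I_h|\vw_h|,\nabla\vw_h)$ \emph{subject to the pointwise constraint coupling them}; the acuteness inequality is what makes the quadratic form $(\kappa-1)a^2 + b^2$ nonnegative on the admissible set $\{|a|\le|b|\}$, hence convex there, and convex lsc functionals of weakly convergent pairs are weakly lsc. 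Concretely, I would write $L_h = |\nabla\vw_h|^2 - (1-\kappa)|\nabla I_h|\vw_h||^2$ and note $\int_\Omega |\nabla\vw_h|^2 - (1-\kappa)\int_\Omega |\nabla I_h|\vw_h||^2 \ge \kappa \int_\Omega|\nabla I_h|\vw_h||^2 \ge 0$; then since both $\vw_h\rightharpoonup\vw$ and $I_h|\vw_h|\rightharpoonup|\vw|$ in $H^1$, apply lower semicontinuity of $v\mapsto\int|\nabla v|^2$ to the combination $\vw_h \mapsto \vw_h$ and an \emph{upper} bound (from the acuteness identity, which gives an exact quantity, not just an inequality) to control $\int|\nabla I_h|\vw_h||^2$ from above by $\int|\nabla\vw_h|^2$, and pass to the limit. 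I expect this interplay of a lower-semicontinuity inequality for one term with an exact (acuteness-based) identity bounding the other to be the crux, and the rest — $L^2$ convergence of $I_h|\vw_h|$, identification of weak limits, well-definedness — to be routine.
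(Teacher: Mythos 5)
There is a genuine gap in the final limit passage, and it sits exactly where you flag the crux. For $0<\kappa<1$ the term $(\kappa-1)\int_\Om|\nabla I_h|\vw_h||^2$ carries a negative coefficient, so to bound $\liminf_h\int_\Om L_h$ from below you need an asymptotic \emph{upper} control of $\int_\Om|\nabla I_h|\vw_h||^2$ by $\int_\Om|\nabla|\vw||^2$; weak convergence $I_h|\vw_h|\rightharpoonup|\vw|$ in $H^1$ gives only the lower bound, since $v\mapsto-\int_\Om|\nabla v|^2$ is weakly \emph{upper}, not lower, semicontinuous. The concrete chain you write down --- use $\int_\Om|\nabla I_h|\vw_h||^2\le\int_\Om|\nabla\vw_h|^2$ and then lsc of $\int_\Om|\nabla\vw_h|^2$ --- yields only $\liminf_h\int_\Om L_h\ge\kappa\int_\Om|\nabla\vw|^2$, which lies \emph{below} the target $\int_\Om\big((\kappa-1)|\nabla|\vw||^2+|\nabla\vw|^2\big)$ because $|\nabla|\vw||\le|\nabla\vw|$ a.e.; so the lemma does not follow. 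The fallback of ``joint convexity of $(\kappa-1)a^2+b^2$ on the set $\{|a|\le|b|\}$'' also fails: for $\kappa<1$ this quadratic form is indefinite, nonnegativity on a (non-convex) cone is not convexity, and the pointwise coupling between $\nabla I_h|\vw_h|$ and $\nabla\vw_h$ is not preserved under independent weak limits of the pair.

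The missing idea --- and the route the paper takes --- is to eliminate $\nabla I_h|\vw_h|$ as an independent variable. On each element, $|\vw_h^i|-|\vw_h^0|=\frac{\vw_h^i+\vw_h^0}{|\vw_h^i|+|\vw_h^0|}\cdot(\vw_h^i-\vw_h^0)$ gives $\nabla I_h|\vw_h|=G_h(\vw_h):\nabla\vw_h$ for an explicit $3$-tensor $G_h(\vw_h)$ depending only on nodal values, so $L_h=|M|^2+(\kappa-1)|G_h(\vw_h):M|^2$ with $M=\nabla\vw_h$ is a \emph{nonnegative homogeneous quadratic form} in $M$ (since $|G_h(\vw_h):M|\le|M|$), hence genuinely convex in $\nabla\vw_h$ for fixed nodal data --- no mesh acuteness needed. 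One then shows via Egoroff and the a.e.\ convergence $\vw_h\to\vw$ that $G_h(\vw_h)\to\frac{\vw}{|\vw|}\otimes I$ uniformly on large subsets of $\{|\vw|\ge2\epsilon\}$, applies standard weak lower semicontinuity of the convex limit integrand there, and handles the degenerate set $\{\vw=\vzero\}$ using $\nabla\vw=\vzero$ a.e.\ on it. Your preliminary observations (the bound $\|\nabla I_h|\vw_h|\|_{L^2}\le\|\nabla\vw_h\|_{L^2}$ and the identification of the weak limit of $I_h|\vw_h|$ as $|\vw|$) are correct but cannot substitute for this step.
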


\begin{proof}
If $\kappa \geq 1$, then the assertion follows from standard arguments.
Here, we only dwell upon $0 < \kappa < 1$ and dimension $d=2$, because
the case $d=3$ is similar.
After
extracting a subsequence (not relabeled) we can assume that $\vw_h$
converges to $\vw$ strongly in $L^2(\Omega)$ and pointwise a.e. in $\Omega$.

\smallskip
{\it Step 1: Equivalent form of $L_h$.}
We let $T$ be any triangle in the mesh $\Tk_h$, label its
three vertices as $x_0$, $x_1$, $x_2$, and define $\ve_1 := x_1 - x_0$
and $\ve_2 := x_2 - x_0$. After denoting
$\vw^i_h = \vw_h(x_i)$ for $i = 0, 1, 2$, a simple calculation yields
\begin{align*}
\nabla \vw_h = &\; (\vw^1_h - \vw^0_h) \otimes \ve_1^* + (\vw^2_h - \vw^0_h) \otimes \ve_2^*, 
\\
\nabla I_h |\vw_h| = &\; (|\vw^1_h| - |\vw^0_h|) \ve_1^* +  (|\vw^2_h| - |\vw^0_h|) \ve_2^*, 
\end{align*}
where
$\{\ve_i^*\}_{i=1}^2$ is the dual basis of $\{\ve_i\}_{i=1}^2$,
that is, $\ve_i^* \cdot \ve_j = I_{ij}$, and $I=(I_{ij})_{i,j=1}^2$ is the identity matrix.
Assuming $|\vw^i_h| + |\vw^0_h| \neq 0$, we realize that
\[
|\vw^i_h| - |\vw^0_h| = \frac{\vw^i_h + \vw^0_h}{|\vw^i_h|
  + |\vw^0_h|} \cdot (\vw^i_h - \vw^0_h).
\]
We then obtain $\nabla I_h |\vw_h| = G_h(\vw_h) : \nabla \vw_h$ where $G_h(\vw_h)$ is the $3$-tensor:
\[
G_h(\vw_h) :=  \frac{\vw^1_h + \vw^0_h}{|\vw^1_h| + |\vw^0_h|}
\otimes \ve_1 \otimes \ve_1^*
+  \frac{\vw^2_h + \vw^0_h}{|\vw^2_h| + |\vw^0_h|} \otimes \ve_2 \otimes\ve_2^*,
\quad\textrm{on } T,
\]
and the contraction between a 3-tensor and a 2-tensor in dyadic form is given by
\[
(\vg_1 \otimes \vg_2 \otimes \vg_3) : (\vm_1 \otimes \vm_2)
:= (\vg_1 \cdot \vm_1) (\vg_2 \cdot \vm_2) \vg_3.
\]
Therefore, we have
\begin{align*}
  L_h (\vw_h, \nabla \vw_h) = |\nabla \vw_h|^2 + (\kappa -1) |G_h(\vw_h) : \nabla \vw_h|^2,
\end{align*}
which expresses $L_h (\vw_h, \nabla \vw_h)$ directly in terms of
$\nabla \vw_h$ and the nodal values of $\vw_h$.

{\it Step 2: Convergence of $G_h(\vw_h)$.}
Given $\epsilon>0$, Egoroff's Theorem \cite{Wheeden_Book1977} asserts that
\[
  \vw_h \to \vw \quad \text{uniformly on $E_{\epsilon}$},
\]
for some subset $E_{\epsilon}$ and $|\Om \setminus E_{\epsilon}| \leq \epsilon$.
We now consider the set
$
A_{\epsilon}:= \{ |\vw(x)| \geq 2 \epsilon \} \cap E_{\epsilon},
$
and observe that there exists a sufficiently small $h_\epsilon$ such that for any $x \in A_{\epsilon}$
\[
 |\vw_h (x)| \geq \epsilon
 \quad
 \text{for all $h \leq h_\epsilon$.}
\]
If $G(\vw) := \frac{\vw}{|\vw|}\otimes I$, then we claim that
\begin{equation}\label{claim-step2}
  \int_{A_{\epsilon}} |G_h(\vw_h) - G(\vw)|^2 dx \to 0, ~~ \text{as } h \to 0.
\end{equation}
For any $x \in A_{\epsilon}$, let $\{T_h\}$ be a sequence of triangles
such that $x \in \overline{T_h}$. Since $|\vw_h(x)| \geq \epsilon$ and
$\vw_h$ is piecewise linear, there exists a vertex of $T_h$, which we
label as $x^0_h$, such that $|\vw^0_h| \geq \epsilon$.
To compare $G_h(\vw_h)$ with $\frac{\vw_h(x)}{|\vw_h(x)|}\otimes I$, we
use that $I = \ve_1 \otimes \ve_1^* +  \ve_2 \otimes \ve_2^*$:
\[
G_h(\vw_h) - \frac{\vw_h(x)}{|\vw_h(x)|}\otimes I = \sum_{i=1,2} \left( \frac{\vw^i_h + \vw^0_h}{|\vw^i_h| + |\vw^0_h|} - \frac{\vw_h(x)}{|\vw_h(x)|} \right) \otimes \ve_i \otimes \ve_i^*.
\]
We define
$
H(\vx, \vy) := \frac{\vx + \vy}{|\vx| + |\vy|}
$
and observe that for all $x \in A_{\epsilon}$, we have
\begin{align*}
G_h(\vw_h) - \frac{\vw_h(x)}{|\vw_h(x)|}\otimes I = \sum_{i=1,2} \left(
  H(\vw^0_h, \vw^i_h) - H(\vw_h(x), \vw_h(x))
 \right) \otimes \ve_i \otimes \ve_i^*.
\end{align*}
Next, we estimate
\begin{align*}
| H( & \vw^0_h, \vw^i_h) - H(\vw_h(x), \vw_h(x)) |
=
\left| \frac{|\vw_h(x)| (\vw^0_h + \vw^i_h) - (|\vw^0_h| + |\vw^i_h|) \vw_h(x) } {(|\vw^0_h| + |\vw^i_h|) |\vw_h(x)|} \right|
 \\
&\leq
\left| \frac{\vw^0_h + \vw^i_h - 2 \vw_h(x) } {|\vw^0_h| + |\vw^i_h|} \right|
+
\left| \frac{ (|\vw_h(x)| - |\vw^0_h|) \vw_h(x) } {(|\vw^0_h| + |\vw^i_h|) |\vw_h(x)|} \right|
+
\left| \frac{ (|\vw_h(x)| - |\vw^i_h|) \vw_h(x) } {(|\vw^0_h| + |\vw^i_h|) |\vw_h(x)|} \right|.
\end{align*}
Since $|\vw^0_h|, |\vw_h(x)| \geq \epsilon$, and $\vw_h(x) - \vw_h(x^i_h) = \nabla \vw_h \cdot (x - x^i_h)$ for all $x \in \overline{T_h}$, we have
\begin{align*}
\big| H(\vw^0_h, \vw^i_h) - H(\vw_h(x), \vw_h(x)) |
\leq C \frac{h}{\epsilon} |\nabla \vw_h|
\quad\forall x\in A_\epsilon\cap\overline{T_h}.
\end{align*}
Integrating on $A_{\epsilon}$, we obtain
\[
  \int_{A_{\epsilon}} \left|G_h(\vw_h) - \frac{\vw_h(x)}{|\vw_h(x)| } \otimes I\right|^2 dx \leq C \frac{h^2}{\epsilon^2} \int_{A_{\epsilon}} |\nabla \vw_h(x)|^2 dx
  \to 0, ~~ \text{as } h \to 0.
\]
Since $\vw_h \to \vw$ a.e. in $\Om$, and $\frac{\vw_h}{|\vw_h|} -
\frac{\vw}{|\vw|}$ is bounded, applying the dominated convergence
theorem, we infer that
\[
 \int_{A_{\epsilon}} \left| \frac{\vw_h}{|\vw_h|} - \frac{\vw}{|\vw|} \right|^2 \to 0, ~~ \text{as } h \to 0.
\]
Combining these two limits, we deduce \eqref{claim-step2}.

{\it Step 3: Convexity.} We now prove that the energy density
\[
  L(\vw, M) := |M|^2 + (\kappa -1) |G(\vw) : M|^2
\]
is convex with respect to any matrix $M$ for any vector $\vw$;
hereafter $G(\vw) = \vz\otimes I$ with $\vz=\frac{\vw}{|\vw|}$
provided $\vw \ne \vzero$ or $|\vz|\le1$ otherwise.
Note that $L(\vw, M)$ is a quadratic function of $M$, so we only need
to show that $L(\vw, M) \ge 0$ for any $M$ and $\vw$.
Thus, it suffices to show that $|G(\vw):M| \leq |M|$.

Assume that $M = \sum_{i,j} m_{ij} \vv_i \otimes \vv_j$ where
$\{\vv_i\}_{i=1}^2$ is the canonical basis on $\R^2$. Then we have
$
|M|^2 = \sum_{i, j=1}^2 m_{ij}^2
$
and a simple calculation yields
\begin{align*}
  G(\vw) : M = &\; \sum_i z_i \vv_i \otimes(\vv_1 \otimes \vv_1 + \vv_2 \otimes \vv_2) : \left(\sum_{k, l} m_{kl} \vv_k \otimes \vv_l \right)
  \\
  = &\;  \sum_{i, k, l} z_i m_{kl} \delta_{ik} \vv_l
  =  \sum_{i, l} z_i m_{il} \vv_l,
\end{align*}
where $\vz=\sum_{i=1}^2 z_i \vv_i$. Therefore, we obtain
\[
  |G(\vw):M|^2 = \sum_{ j=1}^2 \left( \sum_{i=1}^2 z_i m_{ij} \right)^2.
\]
The Cauchy-Schwarz inequality yields
\[
  \left( \sum_{i=1}^2 z_i m_{ij} \right)^2  \leq \left( \sum_{i=1}^2 z_i^2 \right) \left( \sum_{i=1}^2 m_{ij}^2 \right) \le \left( \sum_{i=1}^2 m_{ij}^2 \right),
\]
which implies
$
  |G(\vw):M|^2 \leq |M|^2
$
and $L(\vw,M)\ge0$
for any matrix $M$ and vector $\vw$.
A similar argument shows that $L_h (\vw_h, M) \ge 0$ for any
matrix $M$ and vector $\vw_h$.

{\it Step 4: Weak lower semi-continuity.}
Since $G_h(\vw_h) \to G(\vw)$ in $L^2(A_{\epsilon})$ according to
\eqref{claim-step2}, Egoroff's theorem yields
\[
  G_h(\vw_h) \to G(\vw)   \quad \text{uniformly on $B_{\epsilon}$},
\]
where $B_{\epsilon} \subset A_{\epsilon}$ and $|A_{\epsilon} \setminus B_{\epsilon}| \leq \epsilon$.
We claim that
\begin{equation}\label{claim-step4}
  \liminf_{h\to 0} \int_{\Om} L_h(\vw_h, \nabla \vw_h) dx \geq \int_{B_{\epsilon}} L(\vw, \nabla \vw) dx.
\end{equation}

Step 3 implies $L_h(\vw_h, \nabla \vw_h) \ge 0$ for all $x \in \Om$.  Hence,
\begin{align*}
  \int_{\Om} L_h(\vw_h, \nabla \vw_h) dx 
\geq &\; \int_{B_{\epsilon} } \Big(|\nabla \vw_h|^2 + (\kappa
-1)|G_h(\vw_h) : \nabla \vw_h|^2 \Big) dx .
\end{align*}
A simple calculation yields
\begin{equation*}
\int_{\Om} L_h(\vw_h, \nabla \vw_h) dx \geq \int_{B_{\epsilon} } L(\vw, \nabla \vw_h) dx
+  (\kappa -1) Q_h(\vw,\vw_h)
\end{equation*}
where
\begin{align*}
Q_h(\vw,\vw_h) := &\int_{B_{\epsilon} } \Big([(G_h(\vw_h) - G(\vw)):
  \nabla \vw_h]^t
[G_h(\vw_h): \nabla \vw_h]
\\
& + (G(\vw):\nabla \vw_h)^t [(G_h(\vw_h) - G(\vw)): \nabla \vw_h] \Big) dx.
\end{align*}
Since $L(\vw, \nabla \vw_h)$ is convex with respect to $\nabla \vw_h$ (Step 3), we have \cite[pg. 446, Sec. 8.2.2]{Evans:book}
\[
  \liminf_{h \to 0} \int_{B_{\epsilon} } L(\vw, \nabla \vw_h) dx \geq  \int_{B_{\epsilon}} L(\vw, \nabla \vw) dx .
\]

To prove \eqref{claim-step4}, it remains to show that $Q_h(\vw,\vw_h) \to 0$ as $h \to 0$.
Since $G(\vw)$ and $G_h(\vw_h)$ are bounded and $\int_{\Om} |\nabla \vw_h(x)|^2 dx$ is uniformly bounded, we have
\begin{align*}
  Q_h(\vw,\vw_h) &\leq C \int_{B_{\epsilon}} |G_h(\vw_h) - G(\vw)| |\nabla \vw_h|^2 dx
  \\
  & \le C \max_{B_{\epsilon}} \big|G_h(\vw_h) - G(\vw)\big|\int_{B_{\epsilon}}  |\nabla \vw_h|^2 dx
 \to 0 \quad\text{as } h\to0,
\end{align*}
due to the uniform convergence of $G_h(\vw_h)$ to $G(\vw)$ in $B_{\epsilon}$.
Therefore, we infer that
$
  \liminf_{h \to 0} \int_{\Om} L_h(\vw_h, \nabla \vw_h) dx \geq  \int_{B_{\epsilon} } L(\vw, \nabla \vw) dx.
$

Since the inequality above holds for arbitrarily small $\epsilon$, taking $\epsilon \to 0$ yields
\[
  \liminf_{h \to 0} \int_{\Om} L_h(\vw_h, \nabla \vw_h) dx \geq \int_{\Om \setminus \{\vw(x) = 0\}} L(\vw, \nabla \vw) dx = \int_{\Om} L(\vw, \nabla \vw) dx,
\]
where the last equality follows from $\nabla \vw = \vzero$ a.e. in the set
$\{\vw (x) = \vzero\}$
\cite[Ch. 5, exercise 17, p. 292.]{Evans:book}.
Finally, noting that $G(\vw) : \nabla \vw = \nabla |\vw|$, we get the assertion.
\end{proof}

\smallskip

\subsection{Equi-coercivity}\label{S:equi-coercivity}

We now prove uniform $H^1$-bounds for the pairs $(s_h,\vu_h)$ and
$(\widetilde{s}_h,\widetilde{\vu}_h)$, which enables us to extract
convergence subsequences in $L^2(\Omega)$ and pointwise a.e. in
$\Omega$. We then characterize and relate the limits of such sequences.

\begin{lemma}[coercivity]\label{lemma:coercivity}
For any $(s_h,\vu_h)\in\mathbb{A}_h(g_h,\vr_h)$, we have
\begin{align*}
  E_1^h [s_h, \vn_h] \geq &\; \min\{\kappa, 1\}
  \max \left\{\iO |\nabla \vu_h|^2 dx,
  \iO |\nabla s_h|^2 dx \right\}
\end{align*}
as well as
\begin{align*}
  E_1^h [s_h, \vn_h] \geq &\; \min\{\kappa, 1\}
  \max\left\{\iO |\nabla \widetilde{\vu}_h|^2 dx, \iO |\nabla I_h |s_h||^2 dx\right\}.
\end{align*}
\end{lemma}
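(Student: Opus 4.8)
The proof is short and consists of combining three lower bounds for $E_1^h[s_h,\vn_h]$ that are already available. First, dropping the non-negative second sum in the definition \eqref{discrete_energy_E1} (it is non-negative because $k_{ij}\ge0$ for $i\ne j$ and the diagonal terms vanish since $\delta_{ii}\vn_h=\vzero$) and using the identity \eqref{eqn:dirichlet_integral_identity} gives
\[
(\mathrm{i})\qquad E_1^h[s_h,\vn_h] \;\ge\; \kappa \,\|\nabla s_h\|_{L^2(\Omega)}^2 .
\]
Second, inequality \eqref{energy_inequality} of Lemma \ref{lemma:energydecreasing} reads
\[
(\mathrm{ii})\qquad E_1^h[s_h,\vn_h] \;\ge\; (\kappa-1)\,\|\nabla s_h\|_{L^2(\Omega)}^2 + \|\nabla \vu_h\|_{L^2(\Omega)}^2 ,
\]
and third, inequality \eqref{abs_inequality} is the analogue of (ii) for the tilde pair, namely $E_1^h[s_h,\vn_h] \ge (\kappa-1)\|\nabla \widetilde s_h\|_{L^2(\Omega)}^2 + \|\nabla \widetilde\vu_h\|_{L^2(\Omega)}^2$. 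The elementary observation driving everything is that a number exceeding both $X$ and $Y$ exceeds $\lambda X + (1-\lambda)Y$ for every $\lambda\in[0,1]$; we pick $\lambda$ so that the coefficient of $\|\nabla\vu_h\|^2$ comes out to be exactly $\min\{\kappa,1\}$ while the coefficient of $\|\nabla s_h\|^2$ stays non-negative and can be discarded.

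For the first inequality of the lemma, set $\lambda := 1-\min\{\kappa,1\}\in[0,1)$ and form the combination $\lambda\cdot(\mathrm{i}) + (1-\lambda)\cdot(\mathrm{ii})$. The coefficient of $\|\nabla s_h\|^2$ is $\lambda\kappa+(1-\lambda)(\kappa-1)=\kappa-\min\{\kappa,1\}\ge0$, so it may be dropped, while the coefficient of $\|\nabla \vu_h\|^2$ is $1-\lambda=\min\{\kappa,1\}$; hence $E_1^h[s_h,\vn_h]\ge\min\{\kappa,1\}\,\|\nabla\vu_h\|_{L^2(\Omega)}^2$. On the other hand, since $\kappa\ge\min\{\kappa,1\}$, bound (i) alone gives $E_1^h[s_h,\vn_h]\ge\min\{\kappa,1\}\,\|\nabla s_h\|_{L^2(\Omega)}^2$. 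Taking the maximum of the two proves the first assertion. For the second inequality one repeats the argument verbatim with $(\widetilde s_h,\widetilde\vu_h)$ replacing $(s_h,\vu_h)$: \eqref{abs_inequality} plays the role of (ii), and since $\widetilde s_h=I_h|s_h|$ inequality \eqref{sh-tildesh} upgrades (i) to $E_1^h[s_h,\vn_h]\ge\kappa\|\nabla s_h\|^2\ge\kappa\|\nabla \widetilde s_h\|_{L^2(\Omega)}^2=\kappa\|\nabla I_h|s_h|\|_{L^2(\Omega)}^2$; the same choice $\lambda=1-\min\{\kappa,1\}$ then yields $E_1^h[s_h,\vn_h]\ge\min\{\kappa,1\}\|\nabla\widetilde\vu_h\|_{L^2(\Omega)}^2$ and, together with the upgraded (i), the stated maximum.

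The only step needing care is the regime $0<\kappa<1$, which is precisely the one relevant for defects: there neither (i) nor (ii) separately controls $\|\nabla\vu_h\|^2$, because the term $(\kappa-1)\|\nabla s_h\|^2$ in (ii) has the wrong sign, so the convex combination — and hence the special structure of the discrete energy \eqref{discrete_energy_E1} encoded in (i) — is genuinely used. For $\kappa\ge1$ everything is immediate, since then $\lambda=0$ and \eqref{energy_inequality} alone already suffices.
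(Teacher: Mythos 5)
Your proof is correct and takes essentially the same route as the paper's: the trivial bound $E_1^h[s_h,\vn_h]\ge\kappa\|\nabla s_h\|_{L^2(\Omega)}^2$ obtained by dropping the non-negative director sum, combined with \eqref{energy_inequality} (resp.\ \eqref{abs_inequality} together with \eqref{sh-tildesh} for the tilde pair). Your convex combination with $\lambda=1-\min\{\kappa,1\}$ is an equivalent repackaging of the paper's direct manipulation of the sums via \eqref{lem:identity} in the delicate case $0<\kappa<1$, and all coefficients check out.
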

\begin{proof}
Inequality \eqref{energy_inequality}
of Lemma \ref{lemma:energydecreasing} shows that
\begin{align}\label{eq:control_H1_norm}
  E_1^h [s_h, \vn_h]
  \geq (\kappa - 1) \iO |\nabla s_h |^2 dx + \iO |\nabla \vu_h|^2 dx.
\end{align}
If $\kappa \geq1$, then $E_1^h [s_h, \vn_h]$ obviously controls the
$H^1$-norm of $\vu_h$ with constant $1$.

If $0 < \kappa < 1$, then combining \eqref{discrete_energy_E1} with
\eqref{lem:identity} yields
\begin{align*}
  E_1^h [s_h, \vn_h] & \geq \frac{\kappa}{2} \sum_{i, j = 1}^N k_{ij}
  \left( \dij s_h \right)^2
  \\& + \frac{\kappa}{2} \sum_{i, j = 1}^N k_{ij} \left(\frac{s_h(x_i)^2 + s_h(x_j)^2}{2}\right) |\dij \vn_h|^2
 \geq \kappa \iO |\nabla \vu_h|^2 dx,
\end{align*}
whence $E_1^h [s_h, \vn_h] \geq \min\{\kappa, 1\} \iO |\nabla \vu_h|^2 dx$
as asserted. The same argument, but invoking \eqref{abs_inequality}
and \eqref{energyinequality_tilde}, leads to a similar estimate for $\int_\Omega
|\nabla\widetilde{\vu}_h|^2 dx.$

Finally, we note that \eqref{discrete_energy_E1} implies
\[
E_1^h [s_h, \vn_h] \geq \frac{\kappa}{2} \sum_{i,j=1}^N k_{ij} (\dij s_h)^2 = \kappa \int_{\Omega} |\nabla s_h|^2 dx.
\]
Upon recalling $\widetilde{s}_h = I_h |s_h|$ and noting
$|\dij s_h| \ge |\dij \widetilde{s}_h|$ and $k_{ij}\ge0$, we
deduce $\|\nabla s_h\|_{L^2(\Omega)} \ge \|\nabla\widetilde{s}_h\|_{L^2(\Omega)}$
and complete the proof.
\end{proof}

\begin{lemma}[characterizing limits]\label{lemma:char_limit}

Let $\{\Tk_h\}$ satisfy \eqref{weakly-acute} and
let a sequence $(s_h, \vu_h)\in\mathbb{A}_h(g_h,\vr_h)$ satisfy
\begin{equation}\label{uniform-bound}
E_1^h[s_h, \vn_h] \le \Lambda
\quad\text{for all } h > 0,
\end{equation}
with a constant $\Lambda>0$ independent of $h$. Then there exist subsequences
(not relabeled) $(s_h,\vu_h)\in\mathbb{X}_h$ and
$(\widetilde{s}_h,\widetilde{\vu}_h)\in\mathbb{X}_h$ weakly converging
in $[H^1(\Omega)]^{d+1}$ such that
\begin{itemize}
\item
$(s_h,\vu_h)$ converges to $(s,\vu)\in [H^1(\Omega)]^{d+1}$
in $L^2(\Omega)$ and a.e. in $\Omega$;
\item
$(\widetilde{s}_h,\widetilde{\vu}_h)$ converges to
$(\widetilde{s},\widetilde{\vu})\in [H^1(\Omega)]^{d+1}$
in $L^2(\Omega)$ and a.e. in $\Omega$;
\item
the limits satisfy $\widetilde{s}=|s|=|\vu|=|\widetilde{\vu}|$ a.e. in
$\Omega$;

\item
there exists a director field $\vn$ defined in
$\Omega$ such that $\vn_h$ converges to $\vn$ in
$L^2(\Omega\setminus\mathcal{S})$ and a.e. in $\Omega\setminus\mathcal{S}$ and
$\vu = s \vn$, $\widetilde{\vu}=\widetilde{s}\vn$ a.e. in $\Omega$.

\end{itemize}
\end{lemma}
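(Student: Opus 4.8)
The plan is to deduce compactness from the coercivity bounds of Lemma~\ref{lemma:coercivity}, to exploit the nodal identities buried in the discrete structural condition \eqref{discrete-structure} to pin down the mutual relations of the limits, and finally to reconstruct the director field by dividing a strongly $L^2$-convergent product $\widetilde{s}_h \vn_h$ by $\widetilde{s}_h$, pointwise away from $\mathcal{S}$. First I would collect uniform $L^\infty$ bounds: $|s_h| \le 1$ by \eqref{discrete-structure}, $|\vn_h| \le 1$ everywhere (on each simplex $\vn_h$ is a convex combination of unit vectors), hence $|\vu_h| = |I_h[s_h \vn_h]| \le 1$, $|\widetilde{s}_h| = |I_h|s_h|| \le 1$ and $|\widetilde\vu_h| = |I_h[\widetilde{s}_h \vn_h]| \le 1$. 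Combined with the gradient bounds $\|\nabla s_h\|_{L^2(\Omega)}, \|\nabla \vu_h\|_{L^2(\Omega)}, \|\nabla \widetilde{s}_h\|_{L^2(\Omega)}, \|\nabla \widetilde\vu_h\|_{L^2(\Omega)} \le (\Lambda/\min\{\kappa,1\})^{1/2}$ from Lemma~\ref{lemma:coercivity} and \eqref{uniform-bound}, and $|\Omega| < \infty$, this yields uniform bounds in $[H^1(\Omega)]^{d+1}$. By Rellich's compactness theorem I extract a subsequence (not relabeled) along which $(s_h, \vu_h) \rightharpoonup (s,\vu)$ and $(\widetilde{s}_h, \widetilde\vu_h) \rightharpoonup (\widetilde{s}, \widetilde\vu)$ weakly in $H^1$, strongly in $L^2$, and a.e.\ in $\Omega$; since $|\vn_h| \le 1$, after a further extraction $\vn_h \rightharpoonup \vn$ weakly in $[L^2(\Omega)]^d$ for some field $\vn$ on $\Omega$ with $|\vn| \le 1$ a.e.

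Next I would establish the identities $\widetilde{s} = |s| = |\vu| = |\widetilde\vu|$. The point is that the three piecewise affine functions $\widetilde{s}_h = I_h|s_h|$, $I_h|\vu_h|$ and $I_h|\widetilde\vu_h|$ \emph{coincide}, since at every node $x_i$ one has $|s_h(x_i)| = |\vu_h(x_i)| = |\widetilde\vu_h(x_i)| = \widetilde{s}_h(x_i)$ by \eqref{discrete-structure} and $|\vn_h(x_i)| = 1$. For any function $w$ that is affine on each element, $\|I_h|w| - |w|\|_{L^\infty(T)} \le \mathrm{osc}_T(|w|) \le \mathrm{osc}_T(w) \le h_T |\nabla w|_T$, so $\|I_h|w| - |w|\|_{L^2(T)} \le h_T \|\nabla w\|_{L^2(T)}$; applied to $w = s_h, \vu_h, \widetilde\vu_h$ this gives $\|\widetilde{s}_h - |s_h|\|_{L^2(\Omega)}, \|\widetilde{s}_h - |\vu_h|\|_{L^2(\Omega)}, \|\widetilde{s}_h - |\widetilde\vu_h|\|_{L^2(\Omega)} \le C h \to 0$. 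Passing to the limit with $|s_h| \to |s|$, $|\vu_h| \to |\vu|$, $|\widetilde\vu_h| \to |\widetilde\vu|$ in $L^2(\Omega)$ then yields $\widetilde{s} = |s| = |\vu| = |\widetilde\vu|$ a.e.

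The crux is the identity $\vu = s\vn$ (and $\widetilde\vu = \widetilde{s}\vn$), i.e.\ controlling the ``variational crime'' of enforcing $\vu_h = s_h \vn_h$ only at vertices. On an element $T$ the function $s_h \vn_h$ is quadratic with $|D^2(s_h \vn_h)| \le 2 |\nabla s_h|_T \, |\nabla \vn_h|_T$, so the first-order interpolation estimate gives $\|\vu_h - s_h \vn_h\|_{L^2(T)}^2 \le C h_T^4 |T| \, |\nabla s_h|_T^2 \, |\nabla \vn_h|_T^2$. There is no uniform $H^1$ bound on $\vn_h$, but the crude elementwise bound $|\nabla \vn_h|_T \le C h_T^{-1}$ (from $|\vn_h(x_i)| = 1$ and shape regularity) enters with a harmless power, since $h_T^4 |T| \, |\nabla \vn_h|_T^2 \le C h_T^2 |T|$ and thus $\|\vu_h - s_h \vn_h\|_{L^2(T)}^2 \le C h_T^2 \|\nabla s_h\|_{L^2(T)}^2$; summing, $\|\vu_h - s_h \vn_h\|_{L^2(\Omega)} \le C h \, \|\nabla s_h\|_{L^2(\Omega)} \to 0$, and likewise $\|\widetilde\vu_h - \widetilde{s}_h \vn_h\|_{L^2(\Omega)} \to 0$. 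Because $s_h \to s$ strongly in $L^2$ with $\|s_h\|_{L^\infty} \le 1$ while $\vn_h \rightharpoonup \vn$ weakly in $L^2$, one has $s_h \vn_h \rightharpoonup s\vn$ weakly in $L^2$; combined with $\vu_h \to \vu$ strongly and $\vu_h - s_h \vn_h \to 0$, the weak and the strong limits of $s_h \vn_h$ must agree, so $\vu = s\vn$ a.e., and the same argument gives $\widetilde\vu = \widetilde{s}\vn$ a.e. I expect this step -- reconciling the absence of $H^1$ control on $\vn_h$ with the need to identify the limit of the nodal product -- to be the main obstacle; its resolution is purely a matter of scaling, the $O(h_T^2)$ interpolation error absorbing the $O(h_T^{-1})$ blow-up of $\nabla \vn_h$ against the bounded Dirichlet energy of $s_h$ (resp.\ $\widetilde{s}_h$).

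It remains to recover $\vn_h$ itself. From the previous step $\widetilde{s}_h \vn_h \to \widetilde\vu = \widetilde{s}\vn$ strongly in $L^2(\Omega)$ (the weak limit equals the strong limit), and $\widetilde{s}_h \to \widetilde{s} = |s| \ge 0$ strongly in $L^2(\Omega)$; after a further extraction both hold a.e.\ in $\Omega$. Since $\Omega \setminus \mathcal{S} = \{s \ne 0\} = \{\widetilde{s} > 0\}$ up to a null set, for a.e.\ $x \in \Omega \setminus \mathcal{S}$ we have $\widetilde{s}_h(x) \to \widetilde{s}(x) > 0$, hence $\widetilde{s}_h(x) > 0$ for $h$ small and $\vn_h(x) = (\widetilde{s}_h \vn_h)(x) / \widetilde{s}_h(x) \to (\widetilde{s}\vn)(x)/\widetilde{s}(x) = \vn(x)$. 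Thus $\vn_h \to \vn$ a.e.\ in $\Omega \setminus \mathcal{S}$, and since $|\vn_h| \le 1$ dominated convergence upgrades this to convergence in $L^2(\Omega \setminus \mathcal{S})$; moreover $|\vn| = |\widetilde\vu|/\widetilde{s} = 1$ a.e.\ on $\Omega \setminus \mathcal{S}$. On $\mathcal{S}$ we have $s = \widetilde{s} = |\vu| = |\widetilde\vu| = 0$, so $\vu = s\vn$ and $\widetilde\vu = \widetilde{s}\vn$ hold there trivially and $\vn$ may be assigned any fixed unit vector, which completes the proof.
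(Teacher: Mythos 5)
Your proof is correct and follows essentially the same route as the paper: coercivity plus Rellich for compactness, nodal interpolation estimates to identify $\widetilde{s}=|s|=|\vu|=|\widetilde\vu|$, the scaling argument pairing the $O(h_T^2)$ interpolation error of $I_h[s_h\vn_h]$ against the inverse estimate $|\nabla\vn_h|\le Ch_T^{-1}$ to kill the variational crime, and the quotient/dominated-convergence argument to recover $\vn_h\to\vn$ on $\Omega\setminus\mathcal{S}$. The only (harmless) variation is that you first extract a weak $L^2(\Omega)$ limit of $\vn_h$ and identify $\vu=s\vn$ by a strong-times-weak product argument, whereas the paper defines $\vn:=s^{-1}\vu$ on $\Omega\setminus\mathcal{S}$ directly and deduces $\vn_h\to\vn$ a.e. from the a.e. convergence of $s_h\vn_h$ to $\vu$.
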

\begin{proof}

The sequences $(s_h,\vu_h)$ and $(\widetilde{s}_h,\widetilde{\vu}_h)$
are uniformly bounded in $H^1(\Omega)$ according to Lemma
\ref{lemma:coercivity} (coercivity). Therefore, since $H^1(\Omega)$ is
compactly embedded in $L^2(\Omega)$ \cite{Adams_Book2003}, there exist
subsequences (not relabeled) that converge in $L^2(\Omega)$ and
a.e. in $\Omega$ to pairs $(s,\vu)\in [H^1(\Omega)]^{d+1}$ and
$(\widetilde{s},\widetilde{\vu})\in [H^1(\Omega)]^{d+1}$, respectively.

Since $s_h\to s$ and $\widetilde{s}_h\to\widetilde{s}$ as $h\to0$,
invoking the triangle inequality yields
\[
\big| \widetilde{s} - |s| \big| \le
\big| \widetilde{s} - \widetilde{s}_h \big|
+ \big| \widetilde{s}_h - |s_h| \big|
+ \big| |s_h| - |s| \big| \to 0
\quad \text{as } h\to0,
\]
which is a consequence of interpolation theory and \eqref{sh-tildesh}, namely
\[
| \widetilde{s}_h - |s_h| \|_{L^2(\Om)} =
\| I_h |s_h| - |s_h| \|_{L^2(\Om)} \leq C h \| \nabla |s_h| \|_{L^2(\Om)} \leq C h \|
\nabla s_h \|_{L^2(\Om)} \le C\Lambda h.
\]
A similar argument shows
\[
\| |\widetilde{\vu}_h| - I_h|\widetilde{\vu}_h| \|_{L^2(\Om)}
\leq C h \| \nabla |\widetilde{\vu}_h| \|_{L^2(\Om)}
\leq C h \|\nabla \widetilde{\vu}_h \|_{L^2(\Om)} \le C\Lambda h.
\]
Since $I_h |\widetilde{\vu}_h| = \widetilde{s}_h \to \widetilde{s}$
and $|\widetilde{\vu}_h| \to |\widetilde{\vu}|$ as $h\to0$, we deduce
$|\widetilde{\vu}| = \widetilde{s}$ a.e. in $\Omega$. Likewise, arguing instead with
the pair $(s_h,\vu_h)$ we infer that $|\vu| = |s|$ a.e. in $\Omega$.

We now define the limiting director field $\vn$ in
$\Omega\setminus\mathcal{S}$ to be $\vn = s^{-1}\vu$ and see that
$|\vn|=1$ a.e. in $\Omega\setminus\mathcal{S}$;
we define $\vn$ in $\mathcal{S}$ to be an arbitrary unit vector.
In order to relate $\vn$ with $\vn_h$, we observe that both $s_h$ and
$\vn_h$ are piecewise linear.
Applying the classical interpolation theory on each element $T$ of $\mathcal{T}_h$, we obtain
$
\| s_h \vn_h - I_h[s_h \vn_h] \|_{L^1(T)}
  \leq C h^2 \| \nabla s_h \otimes \nabla \vn_h \|_{L^1(T)}.
$
Summing over all $T \in \mathcal{T}_h$, we get
\begin{equation*}
  \| s_h \vn_h - I_h[s_h \vn_h] \|_{L^1(\Omega)}
  \leq C h^2 \| \nabla s_h \otimes \nabla \vn_h \|_{L^1(\Omega)}
  \leq C h^2 \| \nabla s_h \|_{L^2(\Omega)} \| \nabla \vn_h \|_{L^2(\Omega)}.
\end{equation*}
An inverse estimate gives $\| \nabla \vn_h \|_{L^2(\Omega)} \le Ch^{-1}$
because $|\vn_h|\le 1$. Hence
\[
\| s_h \vn_h - I_h[s_h \vn_h] \|_{L^1(\Omega)} \le C\Lambda h \to 0
\quad\text{as } h\to 0.
\]
Since $\vu_h = I_h[s_h\vn_h] \to \vu$ as $h\to0$, we discover that also
$s_h \vn_h \to  \vu$ a.e. in $\Omega$ as $h\to0$. Consequently,
for a.e. $x\in \Omega\setminus\mathcal{S}$ we have $s_h(x)\to s(x) \ne 0$
whence $s_h(x)^{-1}$ is well defined for $h$ small and
\[
\vn_h(x) = \frac{s_h(x)\vn_h(x)}{s_h(x)} \to \frac{\vu(x)}{s(x)} = \vn(x)
\quad\text{as } h\to 0.
\]
Since $|\vn_h|\le 1$, the Lebesgue dominated convergence
theorem yields
\[
\|\vn_h \chi_{\Omega\setminus\mathcal{S}} -
\vn\chi_{\Omega\setminus\mathcal{S}}\|_{L^2(\Omega)}
\to 0
\quad\text{as } h\to 0.
\]
It only remains to prove $\widetilde{\vu} = \widetilde{s}\vn$ a.e. in $\Omega$.
The same argument employed above gives
\[
\| \widetilde{s}_h \vn_h - I_h[\widetilde{s}_h \vn_h] \|_{L^1(\Omega)} \le C\Lambda h \to 0
\quad\text{as } h\to 0,
\]
whence $\widetilde{s}_h \vn_h \to \widetilde{\vu}$. This implies that
$\widetilde{s}_h(x)^{-1}$ is well defined
for a.e. $x\in\Omega\setminus\mathcal{S}$ and
\[
\vn_h(x) = \frac{\widetilde{s}_h(x)\vn_h(x)}{\widetilde{s}_h(x)}
\to \frac{\widetilde{\vu}(x)}{\widetilde{s}(x)} = \vn(x)
\quad\text{as } h\to 0.
\]
This completes the proof.
\end{proof}

\subsection{$\Gamma$-convergence}\label{S:Gamma-convergence}

We are now in the position to prove the main result, namely the convergence of global discrete minimizers.
The proof is a minor variation of the standard one
\cite{Braides_book2014,DalMaso_book1993}.

\begin{theorem}[convergence of global discrete minimizers]\label{thm:converge_numerical_soln}
Let $\{ \Tk_h \}$ satisfy \eqref{weakly-acute}. If $(s_h,\vu_h)\in\mathbb{A}_h(g_h,\vr_h)$
is a sequence of global minimizers of $E_h[s_h,\vn_h]$ in \eqref{discrete_energy}, then
every cluster point is a global minimizer of the continuous energy
$E[s ,\vn]$ in \eqref{energy}.
\end{theorem}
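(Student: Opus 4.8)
The plan is to run the textbook $\Gamma$-convergence argument, assembling the three ingredients already in hand: the recovery/lim-sup construction (Proposition~\ref{P:regularization} and Lemma~\ref{lemma:limsup}), the weak lower semicontinuity Lemma~\ref{lemma:wlsc} via the inequality \eqref{abs_inequality}, and the equi-coercivity and compactness of Lemmas~\ref{lemma:coercivity} and \ref{lemma:char_limit}. Throughout I use the standing hypotheses ($\bdys=\bdyvu=\partial\Omega$ and \eqref{gne0}) under which the recovery construction is available. First I would fix any competitor $(s^*,\vu^*)\in\mathbb{A}(g,\vr)$, replace it by its truncation (Lemma~\ref{L:truncation}, which does not increase the energy and keeps the pair in $\mathbb{A}(g,\vr)$), regularize it by $(s^*_\epsilon,\vu^*_\epsilon)\in\mathbb{A}(g,\vr)\cap[W^1_\infty(\Omega)]^{d+1}$ (Proposition~\ref{P:regularization}), and interpolate to obtain $(s^*_{\epsilon,h},\vu^*_{\epsilon,h})\in\mathbb{A}_h(g_h,\vr_h)$ with $\lim_{h\to0}E_h[s^*_{\epsilon,h},\vn^*_{\epsilon,h}]=E[s^*_\epsilon,\vn^*_\epsilon]$: the $E_1$-part is Lemma~\ref{lemma:limsup}, and the $E_2$-part follows because $s^*_{\epsilon,h}\to s^*_\epsilon$ uniformly with values confined to $[-\frac{1}{2}+\delta_0,1-\delta_0]$ by \eqref{cut-off}, so $\psi(s^*_{\epsilon,h})\to\psi(s^*_\epsilon)$ uniformly. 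Minimality of $(s_h,\vu_h)$ then gives $\limsup_{h\to0}E_h[s_h,\vn_h]<\infty$, and since $\psi\ge0$ forces $E_2^h\ge0$, the uniform bound \eqref{uniform-bound} holds; Lemma~\ref{lemma:coercivity} upgrades this to uniform $H^1$ bounds for $(s_h,\vu_h)$ and $(\widetilde s_h,\widetilde\vu_h)$.

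Let $(s,\vu)$ be a cluster point of $(s_h,\vu_h)$ in $\mathbb{X}$, i.e.\ the $L^2$-limit of a subsequence; after a further extraction Lemma~\ref{lemma:char_limit} applies along this subsequence and supplies limits $(s,\vu)$ and $(\widetilde s,\widetilde\vu)$ (in $L^2$ and a.e., weakly in $H^1$) with $\widetilde s=|s|=|\vu|=|\widetilde\vu|$ a.e., together with a director field $\vn$ with $\vn_h\to\vn$ in $L^2(\Omega\setminus\mathcal{S})$, $|\vn|=1$ a.e., $\vu=s\vn$ and $\widetilde\vu=\widetilde s\vn$ a.e. I would first check admissibility $(s,\vu)\in\mathbb{A}(g,\vr)$: the pair lies in $[H^1(\Omega)]^{d+1}$; weak $H^1$-convergence together with the (weak-to-strong) continuity of the trace and $g_h\to g$, $\vr_h\to\vr$ in $L^2(\partial\Omega)$ gives the Dirichlet conditions; and the strict confinement $-\frac{1}{2}<s<1$ a.e.\ follows from Fatou applied to $E_2^h$: with $\psi$ extended by $+\infty$ at the endpoints (lower semicontinuous), $\iO\psi(s)\,dx\le\liminf_{h\to0}E_2^h[s_h]\le\liminf_{h\to0}E_h[s_h,\vn_h]<\infty$, so $\psi(s)<\infty$ a.e. The same Fatou inequality records $\liminf_{h\to0}E_2^h[s_h]\ge E_2[s]$.

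For the lower bound of the $E_1$-part --- the crux --- I would route everything through the tilde pair. By \eqref{abs_inequality},
\[
E_1^h[s_h,\vn_h]\ \ge\ \widetilde E_1^h[\widetilde s_h,\widetilde\vu_h]=\iO\big((\kappa-1)|\nabla\widetilde s_h|^2+|\nabla\widetilde\vu_h|^2\big)dx,
\]
and since $\widetilde s_h(x_i)=|\widetilde\vu_h(x_i)|$ at every node (because $\widetilde s_h=I_h|s_h|\ge0$, $|\vn_h(x_i)|=1$, and $\widetilde\vu_h=I_h[\widetilde s_h\vn_h]$), we have $\widetilde s_h=I_h|\widetilde\vu_h|$, so the right-hand side equals $\iO L_h(\widetilde\vu_h,\nabla\widetilde\vu_h)\,dx$. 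Now $\widetilde\vu_h\rightharpoonup\widetilde\vu$ in $H^1(\Omega)$ by Step~1, so Lemma~\ref{lemma:wlsc} gives
\[
\liminf_{h\to0}E_1^h[s_h,\vn_h]\ \ge\ \iO\big((\kappa-1)\,| \nabla |\widetilde\vu| |^2+| \nabla \widetilde\vu |^2\big)dx\ =\ \widetilde E_1[\widetilde s,\widetilde\vu],
\]
using $|\widetilde\vu|=\widetilde s$. Finally I collapse the chain $\widetilde E_1[\widetilde s,\widetilde\vu]=E_1[\widetilde s,\vn]=E_1[s,\vn]$: the first equality is the identity \eqref{auxiliary_energy_identity} with $\widetilde\vu=\widetilde s\vn$, $|\vn|=1$; the second uses $\widetilde s=|s|$, whence $\nabla\widetilde s=\sign_0(s)\nabla s$ so $|\nabla\widetilde s|=|\nabla s|$ a.e.\ and $\widetilde s^2=s^2$. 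Combined with Step~2, $\liminf_{h\to0}E_h[s_h,\vn_h]\ge E_1[s,\vn]+E_2[s]=E[s,\vn]$.

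To conclude, I return to the competitors: for each truncated--regularized $(s^*_\epsilon,\vu^*_\epsilon)$, minimality and Step~1 give $\limsup_{h\to0}E_h[s_h,\vn_h]\le E[s^*_\epsilon,\vn^*_\epsilon]$; letting $\epsilon\to0$, the $H^1$-convergence $(s^*_\epsilon,\vu^*_\epsilon)\to(s^*,\vu^*)$ (for the truncated $s^*$) yields $\widetilde E_1[s^*_\epsilon,\vu^*_\epsilon]\to\widetilde E_1[s^*,\vu^*]=E_1[s^*,\vn^*]$ and $E_2[s^*_\epsilon]\to E_2[s^*]$ by dominated convergence (using \eqref{cut-off} and the truncation bound), whence $\limsup_{h\to0}E_h[s_h,\vn_h]\le E[s^*,\vn^*]$. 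Chaining with Step~3,
\[
E[s,\vn]\ \le\ \liminf_{h\to0}E_h[s_h,\vn_h]\ \le\ \limsup_{h\to0}E_h[s_h,\vn_h]\ \le\ E[s^*,\vn^*]\qquad\text{for all }(s^*,\vu^*)\in\mathbb{A}(g,\vr),
\]
so $(s,\vu)$ minimizes $E$ over $\mathbb{A}(g,\vr)$, as claimed. The step I expect to be the main obstacle is the lower bound: convexity cannot be applied directly to $\widetilde E_1^h[s_h,\vu_h]$ when $0<\kappa<1$, so the whole point of \eqref{abs_inequality} together with the nodal identity $\widetilde s_h=I_h|\widetilde\vu_h|$ is to recast the energy as $\iO L_h(\widetilde\vu_h,\nabla\widetilde\vu_h)\,dx$, to which Lemma~\ref{lemma:wlsc} applies; the subsequent bookkeeping $\widetilde E_1[\widetilde s,\widetilde\vu]=E_1[s,\vn]$ via $\nabla|s|=\sign_0(s)\nabla s$ is where the degeneracy of the coefficient $s^2$ is finally absorbed. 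A secondary subtlety is establishing $-\frac{1}{2}<s<1$ for the limit, which relies on finiteness of the potential energy.
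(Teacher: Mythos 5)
Your proposal is correct and follows essentially the same route as the paper's proof: the lower bound is routed through \eqref{abs_inequality}, the nodal identity $\widetilde{s}_h=I_h|\widetilde{\vu}_h|$, and Lemma~\ref{lemma:wlsc} applied to the tilde pair; the upper bound uses Proposition~\ref{P:regularization} plus Lemma~\ref{lemma:limsup} together with minimality; Fatou handles $E_2$; and the identity $\widetilde{E}_1[\widetilde{s},\widetilde{\vu}]=E_1[s,\vn]$ closes the chain. Your explicit verification that the cluster point lies in $\mathbb{A}(g,\vr)$ (traces via compactness, strict confinement $-\tfrac12<s<1$ from finiteness of $\int_\Omega\psi(s)$) is a detail the paper's proof passes over quickly, but it does not change the argument.
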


\begin{proof}

In view of \eqref{discrete_energy},
assume there is a constant $\Lambda>0$ such that
\[
\liminf_{h\to0} E_h[s_h,\vn_h] =
\liminf_{h\to0} \left( E_1^h[s_h,\vn_h] + E_2^h[s_h] \right) \le \Lambda,
\]
for otherwise there is nothing to prove. Applying Lemma
\ref{lemma:char_limit} yields subsequences (not relabeled)
$(\widetilde{s}_h,\widetilde{\vu}_h) \to (\widetilde{s},\widetilde{\vu})$
and $(s_h,\vu_h) \to (s,\vu)$ converging weakly in $[H^1(\Omega)]^{d+1}$,
strongly in $[L^2(\Omega)]^{d+1}$ and a.e. in $\Omega$.
Using Lemma \ref{lemma:wlsc}, we deduce
\[
\widetilde{E}_1[\widetilde{s},\widetilde{\vu}]
= \int_\Omega (\kappa -1) | \nabla \widetilde{s} |^2 + | \nabla\widetilde{\vu} |^2 dx
\le \liminf_{h\to0}
\widetilde{E}_1^h[\widetilde{s}_h,\widetilde{\vu}_h]
\le \liminf_{h\to0} E_1^h[s_h,\vn_h],
\]
where the last inequality is a consequence of \eqref{abs_inequality}.
Since $s_h$ converges a.e. in $\Omega$ to $s$, so does
$\psi(s_h)$ to $\psi(s)$. Apply now Fatou's
lemma to write
\[
E_2[s]=\int_\Omega \psi(s) = \int_\Omega \lim_{h\to0} \psi(s_h)
\le \liminf_{h\to0} \int_\Omega\psi(s_h) = \liminf_{h\to0} E_2^h[s_h].
\]
Consequently, we obtain
\[
\widetilde{E}_1[\widetilde{s},\widetilde{\vu}] + E_2[s]
\le \liminf_{h\to0} E_h[s_h,\vn_h] \le
\limsup_{h\to0} E_h[s_h,\vn_h].
\]
Moreover, the triple $(s,\vu,\vn)$ given by Lemma \ref{lemma:char_limit}
satisfy the structure property \eqref{structure}.

In view of Proposition \ref{P:regularization}, given
$\epsilon>0$ arbitrary, we can always find a pair
$(t_\epsilon,\vv_\epsilon)\in \mathbb{A}(g,\vr)\cap[W^1_\infty(\Omega)]^{d+1}$ such that
\[
\widetilde{E}_1[t_\epsilon,\vv_\epsilon] + E_2[t_\epsilon]
= E_1[t_\epsilon,\vm_\epsilon] + E_2[t_\epsilon] \le
\inf_{(t,\vm)\in\mathbb{A}(g,\vr)} E[t,\vm] + \epsilon \leq E[s,\vn] + \epsilon,
\]
where $\vm_\epsilon := t_\epsilon^{-1} \vv_\epsilon$ if
$t_\epsilon \neq 0$ or otherwise $\vm_\epsilon$ is an arbitrary unit
vector.
Apply Lemma \ref{lemma:limsup} to $(t_\epsilon,\vv_\epsilon)$ and $\vm_\epsilon$ to
find $(t_{\epsilon,h},\vv_{\epsilon,h})\in\mathbb{A}_h(g_h,\vr_h)$,
$\vm_{\epsilon,h} \in \Vh$ such that
\[
E_1[t_\epsilon,\vm_\epsilon] = \lim_{h\to0} E_1^h [t_{\epsilon,h},\vm_{\epsilon,h}].
\]
On the other hand, \eqref{potential} and
\eqref{cut-off} imply that $0\le\psi(t_{\epsilon,h})
\le \max\{\psi(-\frac{1}{2}+\delta_0),\psi(1-\delta_0)\}$
and we can invoke the Lebesgue dominated convergence theorem to infer
that
\[
E_2[t_\epsilon] = \int_\Omega \lim_{h\to0}\psi(t_{\epsilon,h})
= \lim_{h\to0} \int_\Omega \psi(t_{\epsilon,h}) = \lim_{h\to0} E_2^h[t_{\epsilon,h}].
\]
Therefore, collecting the preceding estimates, we arrive at
\[
\widetilde{E}_1[\widetilde{s},\widetilde{\vu}] + E_2[s] \le
\limsup_{h\to0} E_h[s_h,\vn_h] \le
\lim_{h\to0} E_h[t_{\epsilon,h},\vm_{\epsilon,h}]
\le E[s,\vn] + \epsilon.
\]

We now prove that $\widetilde{E}_1[\widetilde{s},\widetilde{\vu}] = E_1[s,\vn]$.
We exploit the relation $\widetilde{\vu} = \widetilde{s}\vn$ a.e. in
$\Omega$ with $|\vn|=1$, together with the fact
that $\vn$ admits a weak gradient in $\Omega\setminus\mathcal{S}$,
to find the orthogonal decomposition
$\nabla \widetilde \vu = \nabla\widetilde{s}\otimes\vn +
\widetilde{s} \, \nabla \vn$ a.e. in $\Om\setminus\mathcal{S}$. Hence
\begin{align*}
\widetilde{E}_1[\widetilde{s},\widetilde{\vu}] =
\int_{\Om\setminus\mathcal{S}} (\kappa -1) | \nabla \widetilde{s} |^2
+ | \nabla  \widetilde{\vu} |^2 dx
= &\;
\int_{\Om\setminus\mathcal{S}} \kappa | \nabla \widetilde{s} |^2
+ \widetilde{s}^2 | \nabla \vn |^2 dx
\\
= &\;
\int_{\Om\setminus\mathcal{S}} \kappa | \nabla s |^2 + s^2 | \nabla \vn |^2 dx
\equiv E_1[s, \vn]
\end{align*}
because $\widetilde{s} = |s|$ and $\| \nabla |s| \|_{L^2(\Om\setminus\mathcal{S})} =
\| \nabla s \|_{L^2(\Om\setminus\mathcal{S})}$.  Note that the
singular set $\mathcal{S}$ does
not contribute because $\| \nabla s \|_{L^2(\mathcal{S})} = \| s \nabla \vn \|_{L^2(\mathcal{S})} = 0$.
Finally, letting $\epsilon \to 0$, we see that the pair $(s,\vn)$ is a
global minimizer of $E$ as asserted.
\end{proof}

If the global minimizer of the continuous energy $E[s ,\vn]$ is unique,
then Theorem \ref{thm:converge_numerical_soln}
readily implies that the discrete energy minimizer
$(s_h, \vn_h)$ converges to the unique minimizer of $E[s ,\vn]$.
This theorem is about global minimizers only, both discrete and continuous.
In the next section, we design a quasi-gradient flow to compute
discrete local minimizers, and show its convergence (see Theorem
\ref{energydecreasing}). In general, convergence to a global
minimizer is not available, nor are rates of convergence due to the
lack of continuous dependence results. However, if local minimizers of
$E[s,\vn]$ are isolated, then there exists local minimizers of
$E_h[s_h,\vn_h]$ that $\Gamma$-converge to $(s,\vn)$
\cite{Braides_book2014,DalMaso_book1993}.

\section{Quasi-Gradient Flow}\label{sec:gradient_flow}

We consider a gradient flow methodology consisting of a gradient
flow in $s$ and a minimization in $\vn$ as a way to compute
minimizers of \eqref{energy} and \eqref{discrete_energy}.  We begin
with its description for the continuous system and verify that it has a monotone energy decreasing property.  We then do the same for the discrete system.

\subsection{Continuous case}

We introduce the following subspace to enforce Dirichlet boundary
conditions on open subsets $\Gamma$ of $\dOm$:
\begin{equation}\label{eqn:dirichlet_BC}
  \Hbdy{\Gamma} = \{ v \in H^1(\Om) : v = 0  \text{ on } \Gamma \}.
\end{equation}
Let the sets $\bdys, \bdyvn$ satisfy $\bdyvn =
\bdyvu \subset \bdys \subset\partial\Omega$ and \eqref{gne0} be
valid on $\bdys$. Therefore, the traces $\vn=\vq:=g^{-1}\vr$ and
$\vn_h=\vq_h:=I_h[g_h^{-1}\vr_h]$ are well defined on $\bdyvn$.

\subsubsection{First order variation}

Consider the bulk energy $E[s ,\vn]$ where the pair $(s, \vu)$,
with $\vu=s\vn$, is in
the admissible class $\mathbb{A} (g,\vr)$ defined in
\eqref{admissibleclass}. We take a variation $z\in H^1_0(\Omega)$ of $s$ and obtain
$\delta_s E [s ,\vn ; z] = \delta_s E_1 [ s ,\vn ; z] + \delta_s E_2 [ s ; z]$,
the first variation of $E$ in the direction $z$, where
\begin{align*}
\delta_s E_1 [s ,\vn ; z] = 2 \int_{\Omega} ( \nabla s \cdot \nabla z
+ |\nabla \vn|^2 s z ) \, dx
\quad \text{and} \quad
\delta_s E_2[ s; z] =  \int_{\Omega} \psi'(s) z \, dx.
\end{align*}
Next, we introduce the space of tangential variations of $\vn$:
\begin{align}\label{eqn:tangent_variation_space}
    \Y (\vn) &= \left\{ \vv \in H^1(\Om)^d : \vv \cdot \vn = 0 \text{ a.e. in } \Omega \right\}.
\end{align}
In order to satisfy the constraint $|\vn|=1$, we take a variation $\vv\in\Y(\vn)$ of $\vn$ and get
\begin{align*}
    \delta_{\vn} E [s ,\vn ; \vv] = \delta_{\vn} E_1 [ s ,\vn ; \vv] =
    2 \iO s^2 (\nabla \vn \cdot \nabla \vv ) \, dx.
\end{align*}
Note that variations in $\Y(\vn)$ preserve the unit length constraint
up to second order accuracy \cite{Virga_book1994}: $|\vn + t \vv|^2 = 1 + t^2 |\vv|^2$ and
$|\vn + t \vv| \ge 1$ for all $t\in\R$.

\subsubsection{Quasi-gradient flow}

We consider an $L^2$-gradient flow for $E$ with respect to the scalar
variable $s$:
\begin{align*}
\int_{\Omega} \partial_t s z \, dx := -\delta_s E_1 [ s ,\vn ; z] - \delta_s E_2 [ s ; z]
\quad\text{for all } z \in \Hbdy\bdys;
\end{align*}
here, we enforce stationary Dirichlet boundary conditions for $s$ on
the set $\bdys \subset \dOm$, whence $z=0$ on $\bdys$.
A simple but formal integration by parts yields
\begin{align*}
\int_{\Omega}  \partial_t s z \, dx = \;- \int_{\Omega}  \big( -2\Delta s +
2|\nabla \vn|^2 s + \psi'(s) \big) z \, dx
\quad \text{ for all } z \in \Hbdy{\bdys},
\end{align*}
where we use the implicit Neumann condition $\vnu \cdot \nabla s = 0$
on $\dOm \setminus \bdys$, $\vnu$ being the outer unit normal on $\dOm$.
Therefore, $s$ satisfies the (nonlinear) parabolic PDE:
\begin{align}\label{pde}
\partial_t s = 2\Delta s - 2|\nabla \vn|^2 s - \psi'(s).
\end{align}

Given $s$ satisfying \eqref{gne0} on $\bdys$,
let $\vn$ satisfy $|\vn|=1$ a.e. in $\Omega$, the stationary
Dirichlet boundary condition $\vn=\vq$ on the open set $\bdyvn \subset \dOm$,
and the following degenerate minimization problem:
\begin{equation*}
  E[s,\vn] \le E[s,\vm]
  \quad\text{for all } |\vm|=1 \text{ a.e. } \Omega,
\end{equation*}
with the same boundary condition as $\vn$. This implies
\begin{equation}\label{deg-min}
  \delta_\vn E[s,\vn;\vv] = 0
  \quad \text{for all } \vv \in \Y(\vn) \cap \Hbdy{\bdyvn}^d.
\end{equation}

\subsubsection{Formal energy decreasing property}

Differentiating the energy with respect to time, we obtain
\begin{align*}
  \partial_t E [s ,\vn ] =
  \delta_s E[s,\vn;\partial_t s] + \delta_\vn E[s,\vn;\partial_t\vn].
\end{align*}
By virtue of \eqref{pde} and \eqref{deg-min}, we deduce that
\begin{align}\label{energy-decrease}
\partial_t E [s ,\vn ] =
- \delta_s E[s,\vn;\partial_t s] = - \int_\Omega |\partial_t s|^2 \, dx.
\end{align}
Hence, the bulk energy $E$ is monotonically decreasing for our quasi-gradient flow.

\subsection{Discrete case}

Let $s_h^k\in\Sh (\bdys,g_h)$ and $\vn_h^k\in\Vh (\bdyvn,\vq_h)$
denote finite element functions with Dirichlet conditions
$s_h^k=g_h$ on $\bdys$ and $\vn_h^k = \vq_h$ on $\bdyvn$,
where $k$ indicates a ``time-step'' index (see Section \ref{algorithm} for the discrete gradient flow algorithm).
To simplify notation, we use the following:
\[
s_i^k := s_h^k(x_i),
\quad
\vn_i^k := \vn_h^k(x_i),
\quad
z_i := z_h(x_i),
\quad
\vv_i := \vv_h(x_i).
\]

\subsubsection{First order variation}

First, we introduce the discrete version of \eqref{eqn:tangent_variation_space}:
\begin{equation}\label{eqn:discrete_tangent_variation_space}
\begin{split}
  \Yh (\vn_h) &= \{ \vv_h \in \Uh : \vv_h(x_i) \cdot \vn_h(x_i) = 0
  \text{ for all nodes } x_i \in \Nk_h \}.
\end{split}
\end{equation}
Next, the first order variation of $E^h_1$ in the direction
  $\vv_h \in \Yh (\vn_h^k) \cap \Hbdy{\bdyvn}$ at the director variable
  $\vn_h^k$ reads
\begin{equation}\label{first_order_discrete_variation_dvN}
\begin{split}
 \delta_{\vn_h} E^h_1 [s_h^k, \vn_h^k ; \vv_h] &=
 \sum_{i, j = 1}^N k_{ij} \left(\frac{(s_i^k)^2 + (s_j^k)^2 }{2}\right) ( \dij \vn_h^k ) \cdot ( \dij \vv_h ),
\end{split}
\end{equation}
whereas the first order variation of $E^h_1$ in the direction
$z_h \in \Sh \cap \Hbdy{\bdys}$ at the degree of orientation variable $s_h^k$
consists of two terms
\begin{equation}\label{first_order_discrete_variation_dS}
\begin{split}
 \delta_{s_h} E^h_1 [s_h^k ,\vn_h^{k} ; z_h] &=
 \kappa \sum_{i, j = 1}^N k_{ij} \left( \dij s_h^k \right) \left( \dij z_h \right)
 + \sum_{i, j = 1}^N k_{ij}  |\dij \vn_h^{k}|^2 \left(\frac{s_i^k z_i+ s_j^k z_j}{2}\right) .
\end{split}
\end{equation}

To design an unconditionally stable scheme for the discrete gradient
flow of $E_2^h[s_h]$, we employ the convex splitting technique in
\cite{Wise_SJNA2009, Shen_DCDS2010, Shen_SJSC2010}.
We split the double well potential into a convex and concave part:
let $\psi_c $ and $\psi_e$ be both convex for all $s \in (-1/2, 1)$
so that $\psi (s) = \psi_c(s) - \psi_e(s)$,
and set
\begin{align}\label{variationE2}
 \delta_{s_h} E^h_2 [ s_h^{k+1}; z_h ] := \int_{\Omega} \big[ \psi_c'(s_h^{k+1}) - \psi_e'(s_h^{k}) \big] z_h dx.
\end{align}

\begin{lemma}[convex-concave splitting]\label{E2}
For any $s_h^{k}$ and $s_h^{k+1}$ in $\Sh$, we have
\[
\iO \psi(s_h^{k+1}) dx - \iO \psi(s_h^{k})  dx  \leq \delta_{s_h} E^h_2 [ s_h^{k+1}; s_h^{k+1} - s_h^k ] .
\]
\end{lemma}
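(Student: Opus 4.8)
The plan is to reduce the inequality to the elementary first-order characterization of convexity, applied pointwise in $\Omega$ to each of the two convex pieces $\psi_c$ and $\psi_e$, and then to integrate. No compactness, interpolation, or mesh-geometry input is needed here; the argument is purely algebraic together with some sign bookkeeping.

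First I would record the standing setup: by the convex splitting in \eqref{variationE2} we have $\psi=\psi_c-\psi_e$ with $\psi_c$ and $\psi_e$ convex and continuously differentiable on $(-1/2,1)$. Since $s_h^k$ and $s_h^{k+1}$ are continuous piecewise linear, their pointwise values lie in the convex hull of their nodal values, hence in a compact subinterval of $(-1/2,1)$; consequently $\psi_c(s_h^{k+1})$, $\psi_e(s_h^k)$, $\psi_c'(s_h^{k+1})$, $\psi_e'(s_h^k)$, and the products appearing below are all bounded functions on $\Omega$, so every integral is finite and the manipulations are legitimate.

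Next I would fix $x\in\Omega$, abbreviate $a:=s_h^k(x)$ and $b:=s_h^{k+1}(x)$, and use the supporting-line inequalities. Convexity of $\psi_c$ at the point $b$ gives $\psi_c(a)\ge\psi_c(b)+\psi_c'(b)(a-b)$, i.e. $\psi_c(b)-\psi_c(a)\le\psi_c'(b)(b-a)$; convexity of $\psi_e$ at the point $a$ gives $\psi_e(b)\ge\psi_e(a)+\psi_e'(a)(b-a)$, i.e. $-(\psi_e(b)-\psi_e(a))\le-\psi_e'(a)(b-a)$. Adding the two inequalities yields the pointwise bound
\[
\psi(b)-\psi(a)=\big(\psi_c(b)-\psi_c(a)\big)-\big(\psi_e(b)-\psi_e(a)\big)\le\big(\psi_c'(b)-\psi_e'(a)\big)(b-a).
\]

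Finally, substituting back $a=s_h^k(x)$, $b=s_h^{k+1}(x)$ and integrating over $\Omega$, the left-hand side becomes $\iO\psi(s_h^{k+1})\,dx-\iO\psi(s_h^k)\,dx$, while the right-hand side is
\[
\iO\big[\psi_c'(s_h^{k+1})-\psi_e'(s_h^k)\big]\big(s_h^{k+1}-s_h^k\big)\,dx=\delta_{s_h}E^h_2[s_h^{k+1};s_h^{k+1}-s_h^k]
\]
by definition \eqref{variationE2} with the test function $z_h=s_h^{k+1}-s_h^k$, which is exactly the claim. The only point that demands attention — and the reason the convex splitting is arranged precisely as in \eqref{variationE2} — is the direction of the two supporting-line estimates: $\psi_c'$ must be evaluated at the \emph{new} iterate $s_h^{k+1}$ (the convex part treated implicitly) and $\psi_e'$ at the \emph{old} iterate $s_h^k$ (the concave part treated explicitly), otherwise the inequality reverses. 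Beyond this sign bookkeeping there is no genuine obstacle.
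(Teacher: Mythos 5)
Your proof is correct and is essentially the same argument as the paper's: the paper writes $\psi(s_h^{k+1})-\psi(s_h^k)$ via the integral form of the mean value theorem and shows the remainder $T\le 0$ using monotonicity of $\psi_c'$ and $\psi_e'$, which is just another packaging of the two supporting-line inequalities you use (convexity of $\psi_c$ at the new iterate, of $\psi_e$ at the old one). The pointwise bound followed by integration with $z_h=s_h^{k+1}-s_h^k$ matches the intended proof.
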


\begin{proof}
A simple calculation, based on the mean-value theorem and the
convex splitting $\psi=\psi_c-\psi_e$,  yields
\begin{align*}
\int_{\Omega} \big( \psi(s_h^{k+1}) - \psi(s_h^{k}) \big) dx
=
 \delta_{s_h} E^h_2 [ s_h^{k+1}; s_h^{k+1} - s_h^k ] + T,
\end{align*}
where
\begin{align*}
T = &\; \int_{\Omega} \int_0^1 \big[ \psi_c'( s_h^{k} +  \theta
  (s_h^{k+1} - s_h^{k} ) ) - \psi_c'(s_h^{k+1}) \big] (s_h^{k+1} -
s_h^k) \, d \theta \, dx
\\
+ &\;
\int_{\Omega} \int_0^1 \big[ \psi_e'( s_h^{k} ) - \psi_e'( s_h^{k} +
  \theta (s_h^{k+1} - s_h^{k}  ) ) \big] (s_h^{k+1} - s_h^k) \, d
\theta \, dx .
\end{align*}
The convexity of both $\psi_c$ and $\psi_e$ implies $T \leq 0$, as desired.
\end{proof}

\subsubsection{Discrete quasi-gradient flow algorithm}\label{algorithm}

Our scheme for minimizing the discrete energy $E_h [ s_h ,\vn_h]$ is
an alternating direction method, which minimizes with respect to
$\vn_h$ and evolves $s_h$ separately in the steepest descent
direction during each
iteration. Therefore, this algorithm is not a standard gradient flow.

\medskip\noindent
{\bf Algorithm} (discrete quasi-gradient flow):
Given $(s_h^{0}, \vn_h^0)$ in $\Sh (\bdys, g_h) \times \Vh (\bdyvn, \vq_h)$,
iterate Steps (a)-(c) for $k\ge0$.

\smallskip\noindent
{\bf Step (a): Minimization.} Find $\vt_h^k\in\Yh (\vn_h^k) \cap \Hbdy{\bdyvn}$ such that $\vn_h^k + \vt_h^k$ minimizes the energy
$
E^h_1 [ s_h^k ,\vn_h^k + \vv_h]
$
for all $\vv_h$ in $\Yh (\vn_h^k) \cap \Hbdy{\bdyvn}$, i.e. $\vt_h^k$ satisfies
\begin{align*}
\delta_{\vn_h} E^h_1 [s_h^k ,\vn_h^k + \vt_h^k; \vv_h] = 0, ~\forall \vv_h \in \Yh (\vn_h^k) \cap \Hbdy{\bdyvn}.
\end{align*}
{\bf Step (b): Projection.} Normalize
$\vn_i^{k+1} := \frac{ \vn_i^k +   \vt_i^k } { | \vn_i^k + \vt_i^k  | }$ at all nodes $x_i \in \Nk_h$.

\noindent
{\bf Step (c): Gradient flow.} Using $(s_h^k, \vn_h^{k+1})$, find $s_h^{k+1}$ in $\Sh (\bdys, g_h)$ such that
\begin{align*}
\int_{\Omega}  \frac{s_h^{k+1} - s_h^{k}}{\delta t} z_h dx= -
\delta_{s_h} E^h_1 [s_h^{k+1} ,\vn_h^{k+1} ; z_h] - \delta_{s_h} E^h_2 [s_h^{k+1} ; z_h]
\quad\forall z_h \in \Sh \cap \Hbdy{\bdys}.
\end{align*}
We impose Dirichlet boundary conditions to both $s_h^k$ and $\vn_h^k$. Note that the scheme has no restriction on the time step thanks to the implicit Euler method in Step (c).

\subsection{Energy decreasing property}

The quasi-gradient flow scheme in Section \ref{algorithm} has a
monotone energy decreasing property, a discrete version of
\eqref{energy-decrease}, provided the mesh $\Tk_h$ is weakly acute, namely it
satisfies \eqref{weakly-acute} \cite{Ciarlet_CMAME1973, Strang_FEMbook2008}.

\smallskip
\begin{theorem}[energy decrease]
  \label{energydecreasing}
Let $\Tk_h$ satisfy \eqref{weakly-acute}. The iterate
$(s_h^{k+1}, \vn_h^{k+1})$ of the Algorithm (discrete quasi-gradient
flow) of Section \ref{algorithm} exists and satisfies
  \[
    E^h [s_h^{k+1} ,\vn_h^{k+1}] \leq E^h [s_h^k ,\vn_h^k ] - \frac{1}{\delta t} \int_{\Omega} (s_h^{k+1} - s_h^k)^2 dx.
  \]
  Equality holds if and only if $(s_h^{k+1}, \vn_h^{k+1}) = (s_h^k,\vn_h^k)$
  (equilibrium state).
\end{theorem}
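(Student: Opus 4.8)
The plan is to process one iteration of the Algorithm substep by substep and show that none of Step~(a), Step~(b), Step~(c) raises the energy, with the strict gain produced by the implicit Euler update of $s_h$. Since Steps~(a) and~(b) move only the director, $E_2^h[s_h^k]$ is untouched there, and since Step~(c) moves only $s_h$, the director stays $\vn_h^{k+1}$. Thus it suffices to establish the two inequalities
\[
E^h[s_h^{k+1},\vn_h^{k+1}]\ \le\ E_1^h[s_h^k,\vn_h^{k+1}]+E_2^h[s_h^k]-\tfrac1{\delta t}\iO (s_h^{k+1}-s_h^k)^2\,dx\ \le\ E^h[s_h^k,\vn_h^k]-\tfrac1{\delta t}\iO (s_h^{k+1}-s_h^k)^2\,dx ,
\]
the right one being Steps~(a)--(b) and the left one Step~(c); existence of the iterate is obtained along the way.

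\textbf{Steps (a) and (b).} In Step~(a), $\vv_h\mapsto E_1^h[s_h^k,\vn_h^k+\vv_h]$ is, by \eqref{discrete_energy_E1} together with $k_{ij}\ge0$ (this is where \eqref{weakly-acute} enters), a convex quadratic bounded below by $0$ on the finite-dimensional subspace $\Yh(\vn_h^k)\cap\Hbdy{\bdyvn}$; hence a minimizer $\vt_h^k$ exists and $E_1^h[s_h^k,\vn_h^k+\vt_h^k]\le E_1^h[s_h^k,\vn_h^k]$. For Step~(b) the key elementary fact is that radial projection onto the unit sphere is a contraction on $\{|\vx|\ge1\}$, i.e.\ $\big|\tfrac{\va}{|\va|}-\tfrac{\vb}{|\vb|}\big|\le|\va-\vb|$ whenever $|\va|,|\vb|\ge1$. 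Since $\vt_i^k\cdot\vn_i^k=0$ gives $|\vn_i^k+\vt_i^k|^2=1+|\vt_i^k|^2\ge1$ at every node, applying this with $\va=\vn_i^k+\vt_i^k$, $\vb=\vn_j^k+\vt_j^k$ and multiplying by the nonnegative weights $k_{ij}\tfrac{(s_i^k)^2+(s_j^k)^2}{2}$ shows, \emph{term by term} in \eqref{discrete_energy_E1}, that $E_1^h[s_h^k,\vn_h^{k+1}]\le E_1^h[s_h^k,\vn_h^k+\vt_h^k]$; the $\kappa$-term is unchanged because $s_h^k$ is fixed. Chaining these two bounds yields the right inequality above.

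\textbf{Step (c).} Freezing $\vn_h^{k+1}$, the map $s_h\mapsto E_1^h[s_h,\vn_h^{k+1}]$ is a \emph{nonnegative} quadratic form of the nodal values --- rewriting the second sum of \eqref{discrete_energy_E1} as $\tfrac12\sum_i (s_i)^2\sum_j k_{ij}|\dij\vn_h^{k+1}|^2$ makes this manifest, again via $k_{ij}\ge0$ --- hence convex, and the Step~(c) equation is exactly the Euler--Lagrange equation of the strictly convex, coercive functional $v\mapsto\tfrac1{2\delta t}\|v\|_{L^2(\Om)}^2+\tfrac12E_1^h[v,\vn_h^{k+1}]+\iO\psi_c(v)\,dx-\iO\big(\tfrac{s_h^k}{\delta t}+\psi_e'(s_h^k)\big)v\,dx$ on $\Sh(\bdys,g_h)$ (using convexity of $\psi_c$); hence $s_h^{k+1}$ exists, is unique, and remains admissible. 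Testing the Step~(c) identity with $z_h=s_h^{k+1}-s_h^k\in\Sh\cap\Hbdy{\bdys}$ gives $\tfrac1{\delta t}\iO(s_h^{k+1}-s_h^k)^2\,dx=-\delta_{s_h}E_1^h[s_h^{k+1},\vn_h^{k+1};s_h^{k+1}-s_h^k]-\delta_{s_h}E_2^h[s_h^{k+1};s_h^{k+1}-s_h^k]$. Convexity of $s_h\mapsto E_1^h[s_h,\vn_h^{k+1}]$ gives $E_1^h[s_h^{k+1},\vn_h^{k+1}]-E_1^h[s_h^k,\vn_h^{k+1}]\le\delta_{s_h}E_1^h[s_h^{k+1},\vn_h^{k+1};s_h^{k+1}-s_h^k]$, while Lemma~\ref{E2} gives $E_2^h[s_h^{k+1}]-E_2^h[s_h^k]\le\delta_{s_h}E_2^h[s_h^{k+1};s_h^{k+1}-s_h^k]$. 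Adding these three relations produces the left inequality, and combining the two inequalities yields the claimed estimate.

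\textbf{Equality case and the main obstacle.} For the equality statement I would trace equality back through the chain: equality forces equality in the convexity step for $s_h\mapsto E_1^h[s_h,\vn_h^{k+1}]$ and in Lemma~\ref{E2}, which by strict convexity of the convex--concave splitting (e.g.\ $\psi_c(s)=\psi(s)+\tfrac{C}{2}s^2$, $\psi_e(s)=\tfrac{C}{2}s^2$) forces $s_h^{k+1}=s_h^k$; then the mass term vanishes, equality must hold in Steps~(a)--(b), and the \emph{strict} contractivity of the projection whenever a nodal vector has length $>1$ forces $\vt_h^k=\vzero$ and $\vn_h^{k+1}=\vn_h^k$ (at all nodes that enter $E_1^h$ with positive weight, which suffices modulo the usual convention for $\vn_h$ at decoupled nodes); the converse is immediate. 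I expect the delicate points to be (i) the nodewise contraction bookkeeping for Step~(b) --- the place where $k_{ij}\ge0$, i.e.\ weak acuteness, is truly indispensable, and the reason \eqref{discrete_energy_E1} uses the \emph{averaged} coefficient $\tfrac{(s_i^k)^2+(s_j^k)^2}{2}$ --- and (ii) making the equality analysis airtight, which hinges on strict convexity of $\psi_c$ (or $\psi_e$) and of radial projection off the sphere.
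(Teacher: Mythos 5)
Your argument follows the paper's proof essentially verbatim: monotonicity of Steps (a)--(b) via convexity in $\vn_h$ and the Lipschitz-$1$ property of radial projection on $\{\va:|\va|\ge 1\}$ applied termwise with the nonnegative weights $k_{ij}$, and the Step (c) decrease via convexity of the quadratic $s_h\mapsto E_1^h[s_h,\vn_h^{k+1}]$ (the paper phrases this as the identity $E_1^h[s_h^{k+1},\vn_h^{k+1}]-E_1^h[s_h^{k},\vn_h^{k+1}]=R_1-E_1^h[s_h^{k+1}-s_h^{k},\vn_h^{k+1}]\le R_1$) together with Lemma \ref{E2} and the scheme tested with $z_h=s_h^{k+1}-s_h^k$. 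One small correction to your supplementary existence argument for Step (c): since $\delta_{s_h}E_1^h$ in \eqref{first_order_discrete_variation_dS} is the \emph{full} G\^ateaux derivative of the quadratic $E_1^h[\,\cdot\,,\vn_h^{k+1}]$, the functional you minimize must contain $E_1^h[v,\vn_h^{k+1}]$ rather than $\tfrac12 E_1^h[v,\vn_h^{k+1}]$, else its Euler--Lagrange equation carries only half of $\delta_{s_h}E_1^h$ and is not the Step (c) equation.
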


\begin{proof}
The Steps (a) and (b) are monotone whereas Step (c) decreases the energy.

{\it Step (a): Minimization.}
Since $E_1^h$ is convex in $\vn_h^k$ for fixed $s_h^k$, there
exists a tangential variation $\vt_h^k$ which minimizes
$E_1^h [s_h^k ,\vn_h^k + \vv_h^k]$ among all tangential variations $\vv_h^k$.
The fact that $E_2^h$ is independent of the director field $\vn_h^k$ implies
\begin{align*}
 E^h [s_h^k ,\vn_h^k + \vt_h^k] \leq E^h [s_h^k ,\vn_h^k ].
\end{align*}

{\it Step (b): Projection.} Since the mesh $\Tk_h$ is weakly acute, we claim that
\begin{align*}
  \vn_h^{k+1}=\frac{\vn_h^k + \vt_h^k}{|\vn_h^k + \vt_h^k|}
  \quad\Rightarrow\quad
  E_1^h \big[s_h^k , \vn_h^{k+1} \big] \leq E_1^h \big[s_h^k , \vn_h^k + \vt_h^k \big].
\end{align*}
We follow \cite{Alouges_SJNA1997, Bartels_SJNA2006}.
Let $\vv_h = \vn_h^k + \vt_h^k$, $\vw_h = \frac{\vv_h}{|\vv_h|}$,
and observe that $|\vv_h|\ge1$ and $\vw_h$ is well-defined.
By \eqref{discrete_energy_E1} (definition of discrete energy), we only need to show that
\begin{align*}
  k_{ij} \frac{(s_i^k)^2 + (s_j^k)^2 }{2} \big|\vw_h(x_i) - \vw_h(x_j)\big|^2 \leq  k_{ij} \frac{(s_i^k)^2 + (s_j^k)^2 }{2} \big|\vv_h(x_i) - \vv_h(x_j)\big|^2 .
\end{align*}
for all $x_i,x_j\in\mathcal{N}_h$. Because $k_{ij}\ge0$ for $i\ne j$, this is equivalent to showing that
$ |\vw_h(x_i) - \vw_h(x_j)| \leq |\vv_h(x_i) - \vv_h(x_j)|$. This
follows from the fact that the mapping $\va \mapsto \va/|\va|$ defined
on $\{\va \in \mathbb R^d: |\va| \geq 1 \}$
is Lipschitz continuous with constant $1$.
Note that equality above holds if and only if $\vn_h^{k+1} = \vn_h^{k}$
or equivalently $\vt_h^k = \vzero$.

{\it Step (c): Gradient flow.}
Since $E_1^h$ is quadratic in terms of $s_h^k$, and
\[
2 s_h^{k+1} \big(s_h^{k+1} - s_h^k \big)
= \big(s_h^{k+1} - s_h^k \big)^2
+ \big|s_h^{k+1}\big|^2 - \big|s_h^k\big|^2,
\]
reordering terms gives
\begin{align*}
 E_1^h [ s_h^{k+1} ,\vn_h^{k+1} ] - E_1^h [ s_h^k ,\vn_h^{k+1} ]
= &\;
R_1
-  E^h_1 [ s_h^{k+1} -s_h^k ,\vn_h^{k+1}]
\leq R_1,
\end{align*}
where
\begin{equation*}
R_1 :=  \delta_{s_h} E^h_1 [ s_h^{k+1} ,\vn_h^{k+1} ; s_h^{k+1} - s_h^k].
\end{equation*}
On the other hand, Lemma \ref{E2} implies
\begin{align*}
E_2^h [ s_h^{k+1} ] - E_2^h [ s_h^k ]
=
\iO  \psi(s_h^{k+1}) dx  - \iO \psi(s_h^{k}) dx \leq R_2:= \delta_{s_h} E^h_2 [ s_h^{k+1} ; s_h^{k+1} - s_h^k].
\end{align*}
Combining both estimates and invoking Step (c) of the Algorithm yields
\[
E^h [ s_h^{k+1} ,\vn_h^{k+1} ] - E^h [ s_h^k ,\vn_h^{k+1} ] \leq R_1 + R_2
= - \frac {1}{\delta t} \int_{\Omega} (s_h^{k+1} - s_h^k)^2 \, dx \leq 0,
\]
which is the assertion.
Note finally that equality occurs if and only if $s_h^{k+1} = s_h^k$
and $\vn_h^{k+1} = \vn_h^k$, which corresponds to an equilibrium state.
This completes the proof.
\end{proof}

\section{Numerical experiments}\label{sec:numerics}

We present computational experiments to illustrate our method, which was implemented with the MATLAB/C++ finite element toolbox FELICITY \cite{FELICITY_REF}.  For all 3-D simulations, we used the algebraic multi-grid solver (AGMG) \cite{Notay_ETNA2010, Napov_NLAA2011, Napov_SISC2012, Notay_SISC2012} to solve the linear systems in parts (a) and (c) of the quasi-gradient flow algorithm.  In 2-D, we simply used the ``backslash'' command in MATLAB.

\subsection{Tangential variations}

Solving step (a) of the Algorithm requires a tangential basis for the test function and the solution.  However, forming the matrix system is easily done by first ignoring the tangential variation constraint (i.e. arbitrary variations), followed by a simple modification of the matrix system.

Let $A \vt_h^{k} = B$ represent the linear system in Step (a) and suppose $d=3$.  Multiplying by a discrete test function $\vv_h$, we have
\begin{equation*}
  \vv_h\tp A \vt_h^{k} = \vv_h\tp B, \quad \text{for all } \vv_h \in \R^{d N}.
\end{equation*}
Next, using $\vn_h^{k}$, find $\vr_1$, $\vr_2$ such that $\{ \vn_h^{k}, \vr_1, \vr_2 \}$ forms an orthonormal basis of $\R^3$ at each node $x_i$, i.e. find an orthonormal basis of $\Yh (\vn_h^{k})$.  Next, expand $\vt_h^{k} = \Phi_1 \vr_1 + \Phi_2 \vr_2$ and make a similar expansion for $\vv_h$.  After a simple rearrangement and partitioning of the linear system, one finds it decouples into two smaller systems: one for $\Phi_1$ and one for $\Phi_2$.  After solving for $\Phi_1$, $\Phi_2$, define the nodal values of $\vt_h^{k}$ by the formula $\vt_h^{k} = \Phi_1 \vr_1 + \Phi_2 \vr_2$.

\subsection{Point defect in 2-D}\label{sec:point_defect_2D}

For the classic Frank energy $\int_{\Om} |\nabla \vn|^2$, a point defect in two dimensions has infinite energy \cite{Virga_book1994}.  This is not the case for the energy \eqref{energy}, because $s$ can go to zero at the location of the point defect, so the term $\int_{\Om} s^2 |\nabla \vn|^2$ remains finite.
\begin{figure}
\begin{center}
  \includegraphics[width=4.0in]{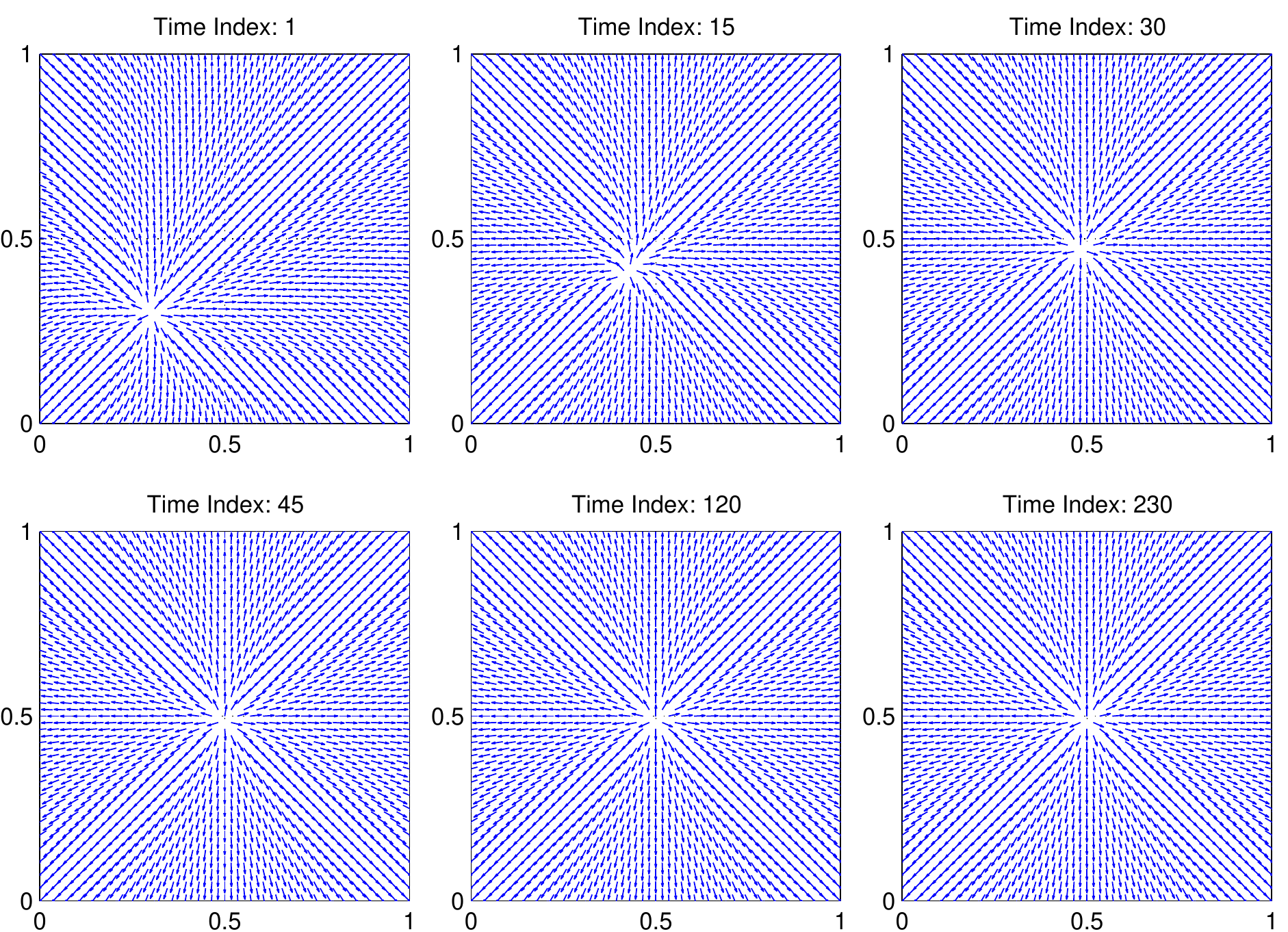}
  \includegraphics[width=4.0in]{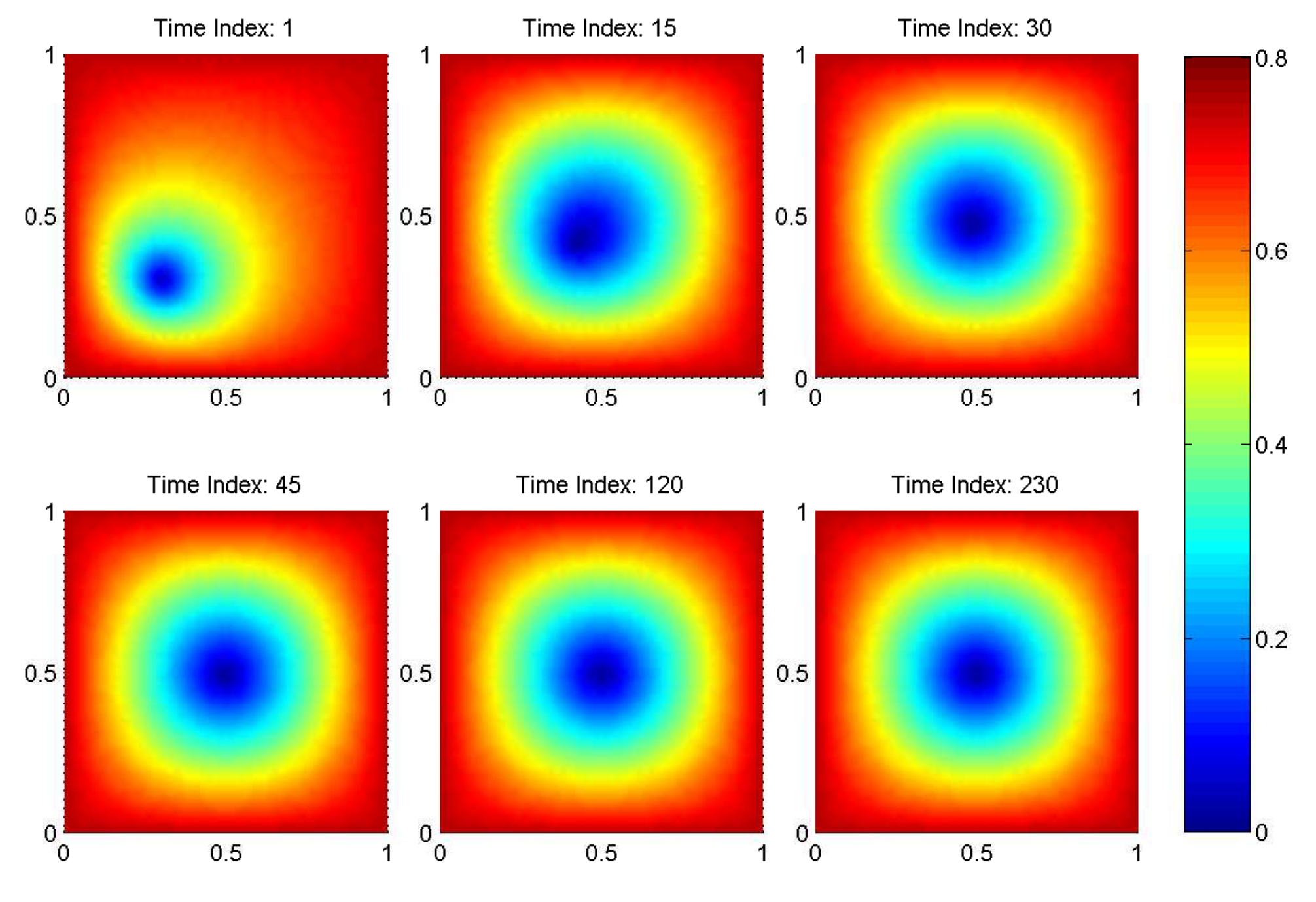}
\caption{Evolution of a point defect toward its equilibrium state (Section \ref{sec:point_defect_2D}).  Time step is $\dt = 0.02$.  The minimum value of $s$, at time index 230, is $2.0226 \cdot 10^{-2}$.}
\label{fig:Point_Defect_2D_Director_Scalar}
\end{center}
\end{figure}

We simulate the gradient flow evolution of a point defect moving to the center of the domain ($\Om$ is the unit square).  We set $\kappa=2$ and take the double well potential to have the following splitting:
\begin{equation*}
\begin{split}
  \psi (s) &= \psi_c (s) - \psi_e (s) \\
  &= 63.0 s^2 - (-16.0 s^4 + 21.33333333333 s^3 + 57.0 s^2),
\end{split}
\end{equation*}
with local minimum at $s=0$ and global minimum at $s=s^* :=
0.750025$ (see Section \ref{subsec:model} and note that a
vertical shift makes $\psi(s^*)=0$ without affecting the gradient flow).
We impose the following Dirichlet boundary conditions for $s$ and $\vn$
\begin{equation}\label{eqn:bc_point_defect}
  s = s^*, \qquad \vn = \frac{(x,y) - (0.5,0.5)}{|(x,y) - (0.5,0.5)|},
\end{equation}
on $\bdys=\bdyvn=\partial \Om$.
Initial conditions on $\Om$ for the gradient flow are: $s = s^*$ and a regularized point defect away from the center.

Figure \ref{fig:Point_Defect_2D_Director_Scalar} shows the evolution of the director field $\vn$ and the scalar degree of orientation parameter $s$.  One can see the regularizing effect that $s$ has.  We note that an $L^2$ gradient flow scheme, instead of the quasi (weighted) gradient flow we use, yields a much slower evolution to equilibrium.

\subsection{Plane defect in 3-D}\label{sec:plane_defect_3D}

Next, we simulate the gradient flow evolution of the liquid crystal
director field toward a \emph{plane} defect in the unit cube
  $\Om = (0, 1)^3$. This is motivated by an exact solution found in
\cite[Sec. 6.4]{Virga_book1994}. We set $\kappa=0.2$ and \emph{remove}
the double well potential.
We impose mixed boundary conditions for $(s,\vn)$, with
Dirichlet conditions on $\bdys=\bdyvn=\overline{\partial \Om} \cap ( \{ z = 0\} \cup \{ z = 1\} )$
\begin{equation}\label{eqn:bc_plane_defect}
\begin{split}
  z=0:& \quad s = s^*, \qquad \vn = (1,0,0), \\
  z=1:& \quad s = s^*, \qquad \vn = (0,1,0),
\end{split}
\end{equation}
and Neumann conditions $\vnu \cdot \nabla s = 0$ and $\vnu \cdot \nabla
\vn = 0$ on the remaining part of
$\partial \Om$; these conditions are not covered by
Section \ref{sec:consistency} but we explore them computationally.
The exact solution $(s,\vn)$ (at
equilibrium) only depends on $z$ and is given by
\begin{equation}\label{eqn:bc_plane_defect_exact_soln}
\begin{split}
  \vn(z) &= (1,0,0), ~ \text{for } z < 0.5, \quad  \vn(z) = (0,1,0), ~ \text{for } z > 0.5, \\
  s(z) &= 0, ~\text{at } z = 0.5, \text{ and } s(z) \text{ is linear for } z \in (0, 0.5) \cup (0.5, 1.0).
\end{split}
\end{equation}
Initial conditions on $\Om$ for the gradient flow are: $s = s^*$ and a regularized point defect away from the center of the cube.

\begin{figure}[ht]
\begin{center}


\includegraphics[width=4.5in]{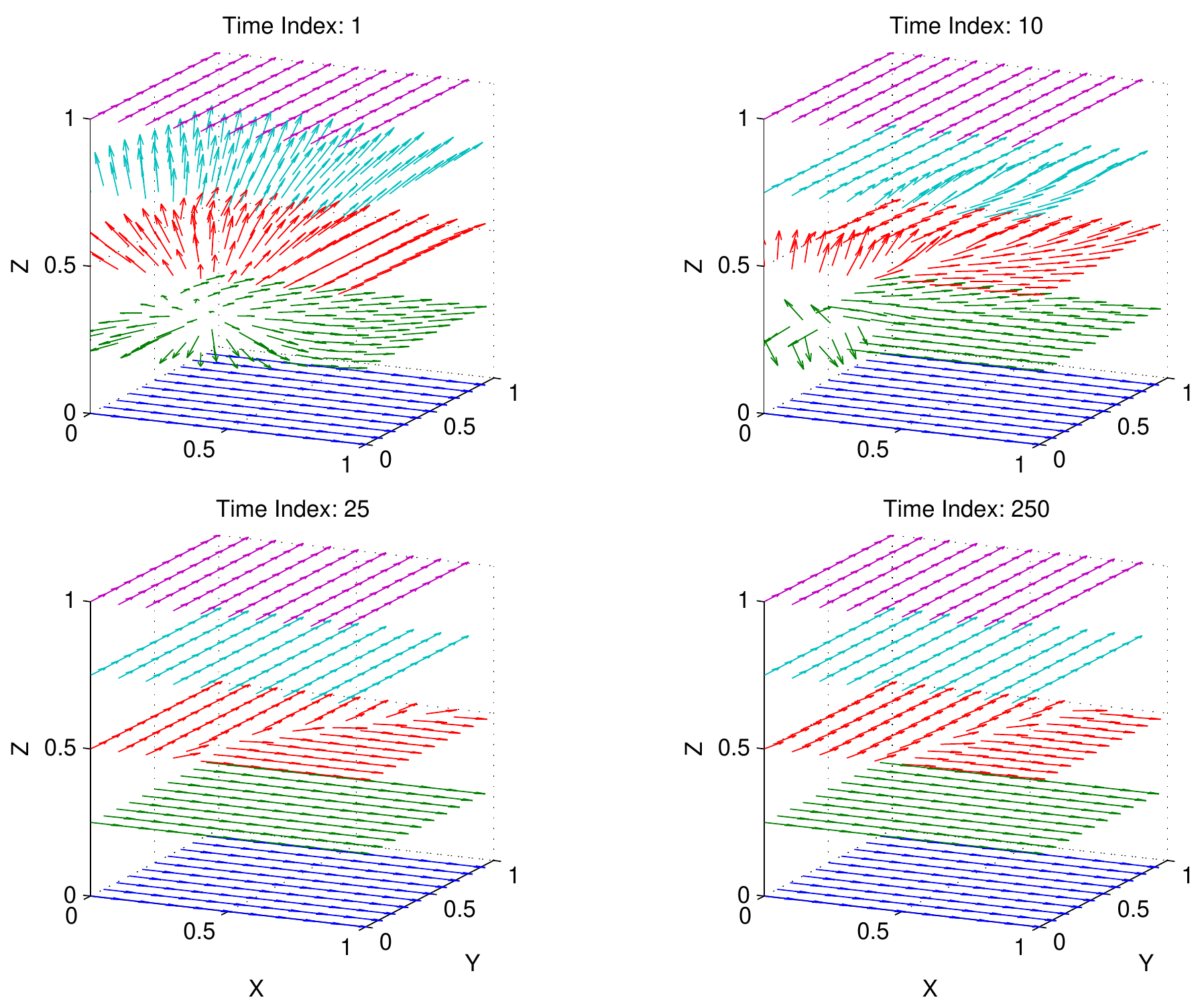}
\caption{Evolution toward an (equilibrium) plane defect (Section
\ref{sec:plane_defect_3D}).  The director field $\vn$ is shown at
five different horizontal slices.  The time step used was $\dt =
0.02$.}
\label{fig:Plane_Defect_3D_Director}
\end{center}
\end{figure}
Figure \ref{fig:Plane_Defect_3D_Director} shows the evolution of the director field $\vn$ toward the plane defect.
Only a few slices are shown in Figure \ref{fig:Plane_Defect_3D_Director} because of the simple form of the equilibrium solution.

Figure \ref{fig:Plane_Defect_3D_Slice_Director_And_Scalar} (left) shows the components of $\vn$ evaluated along a one dimensional vertical slice.  Clearly, the numerical solution approximates the exact solution well, except at the narrow transition region near $z = 0.5$.  Furthermore, Figure \ref{fig:Plane_Defect_3D_Slice_Director_And_Scalar} (right) shows the corresponding evolution of the degree of orientation parameter $s$ (evaluated along the same one dimensional vertical slice).  One can see the regularizing effect that $s$ has, i.e. at equilibrium, $s \approx 0.008$ at the $z=0.5$ plane (the defect plane of $\vn$).  Our numerical experiments suggest that $s|_{z=0.5} \rightarrow 0$ as the mesh size goes to zero.
\begin{figure}
\begin{center}

\psfrag{director}{\hspace{-0.6cm}\small{comp. of }$\vn$} \psfrag{scalar}{$s$}

\includegraphics[width=4.0in]{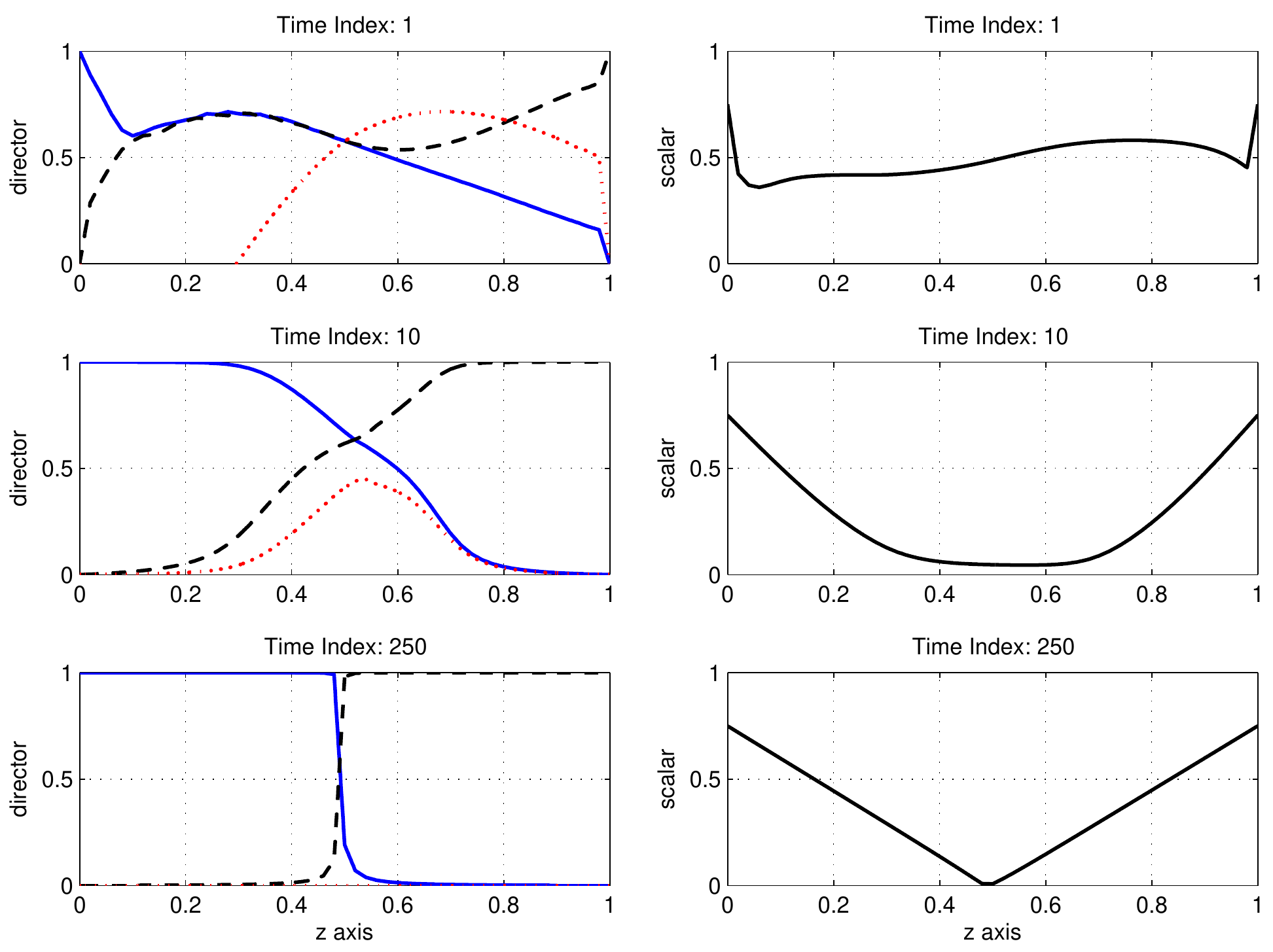}
\caption{Evolution toward an (equilibrium) plane defect (Section \ref{sec:plane_defect_3D}); time step is $\dt = 0.02$.  Left: plots of the three components of $\vn$, evaluated along the vertical line $x=0.5,y=0.5$, are shown at three time indices (solid blue curve: $\vn \cdot \ve_1$, dashed black curve: $\vn \cdot \ve_2$, dotted red curve: $\vn \cdot \ve_3$).  At equilibrium, $\vn$ is nearly piecewise constant with a narrow transition region around $z = 0.5$. Right: plots of the degree-of-orientation $s$, corresponding to $\vn$, are shown.  The equilibrium solution is piecewise linear, with a kink at $z=0.5$ where $s \approx 0.008$.}
\label{fig:Plane_Defect_3D_Slice_Director_And_Scalar}
\end{center}
\end{figure}

\subsection{Fluting effect and propeller defect}\label{sec:propeller_defect}

This example further investigates the effect of $\kappa$ on the presence of defects. An exact solution of a line defect in a right circular cylinder is given in \cite[Sec. 6.5]{Virga_book1994}.  They show that for $\kappa$ sufficiently large (say $\kappa > 1$) the director field is smooth, but if $\kappa$ is sufficiently small, then a line defect in $\vn$ appears along the axis of the cylinder.  Our numerical experiments confirm this.

To further illustrate this effect, we conducted a similar experiment
for a unit cube domain $\Om = (0, 1)^3$.  Again, for simplicity, we
\emph{remove} the double well potential.
We set Dirichlet boundary conditions for $(s,\vn)$
on the vertical sides of the cube
$\bdys=\bdyvn=\overline{\partial \Om} \cap ( \{ x
= 0\} \cup \{ x = 1\} \cup \{ y = 0\} \cup \{ y = 1\} )$,
with
\begin{equation}\label{eqn:bc_propeller_defect}
\begin{split}
  s = s^*, \qquad \vn(x,y,z) = \frac{(x,y) - (0.5,0.5)}{|(x,y) - (0.5,0.5)|}, \\
\end{split}
\end{equation}
and Neumann conditions
$\vnu \cdot \nabla s = 0$ and $\vnu \cdot \nabla \vn = 0$
on the top and bottom parts of $\partial \Om$;
this situation is not covered by Section \ref{sec:consistency}.
Figure \ref{fig:Fluting_Effect_3D_Horiz_Slices} shows the equilibrium solution when $\kappa = 2$.  The $z$-component of $\vn$ is \emph{not} zero, i.e. it points out of the plane of the horizontal slice that we plot.  This is referred to as the ``fluting effect'' (or escape to the third dimension \cite{Virga_book1994}).  In this case, the degree of orientation parameter $s$ is bounded well away from zero, so the director field is smooth (i.e. no defect).
\begin{figure}
\begin{center}

\includegraphics[width=5.0in]{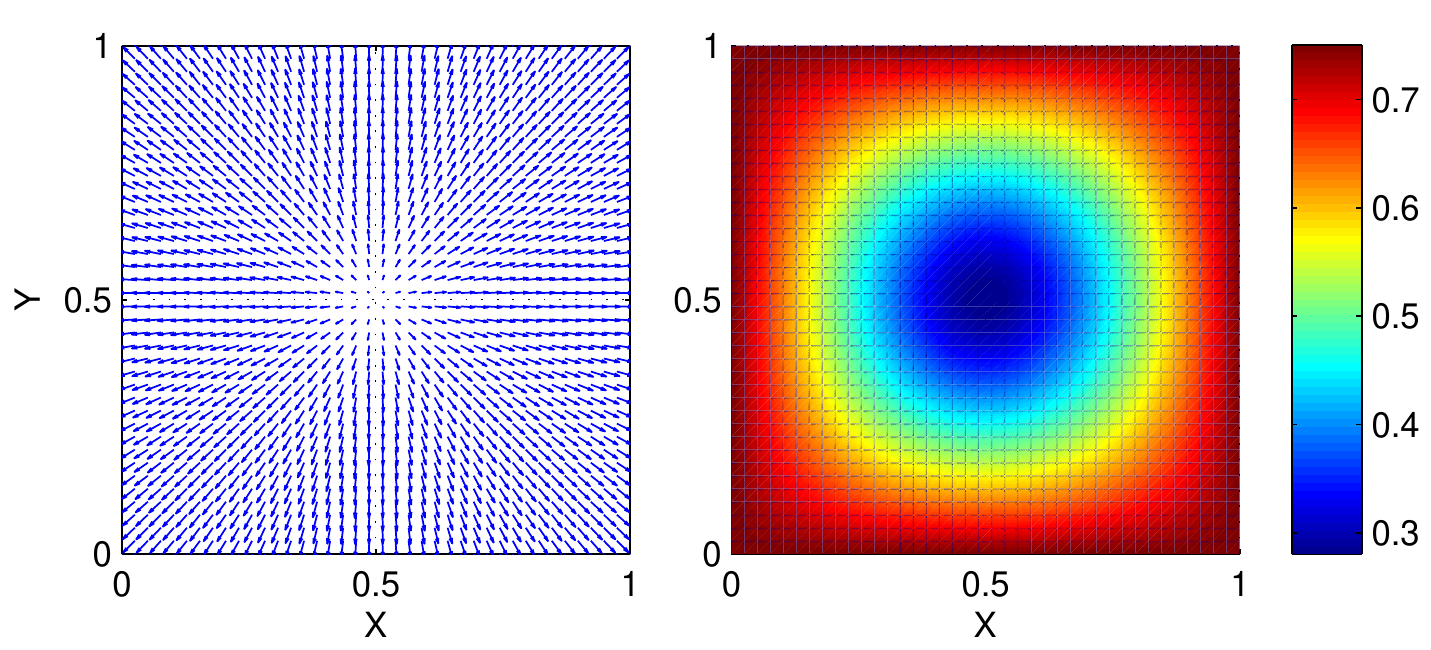}
\caption{Equilibrium state (Section \ref{sec:propeller_defect}) of $\vn$ and $s$.  One horizontal slice ($z = 0.5$) is plotted: $\vn$ on the left, $s$ on the right ($\vn$ and $s$ are approximately independent of $z$).  The director field points out of the plane (i.e. $\vn \cdot \ve_3 \neq 0$) and $s > 0.278$, so there is no defect.}
\label{fig:Fluting_Effect_3D_Horiz_Slices}
\end{center}
\end{figure}

Next, we choose $\kappa = 0.1$, and initialize our gradient flow scheme with $s = s^*$ and a regularized point defect away from the center of the cube for $\vn$.
\begin{figure}
\begin{center}


\includegraphics[width=4.5in]{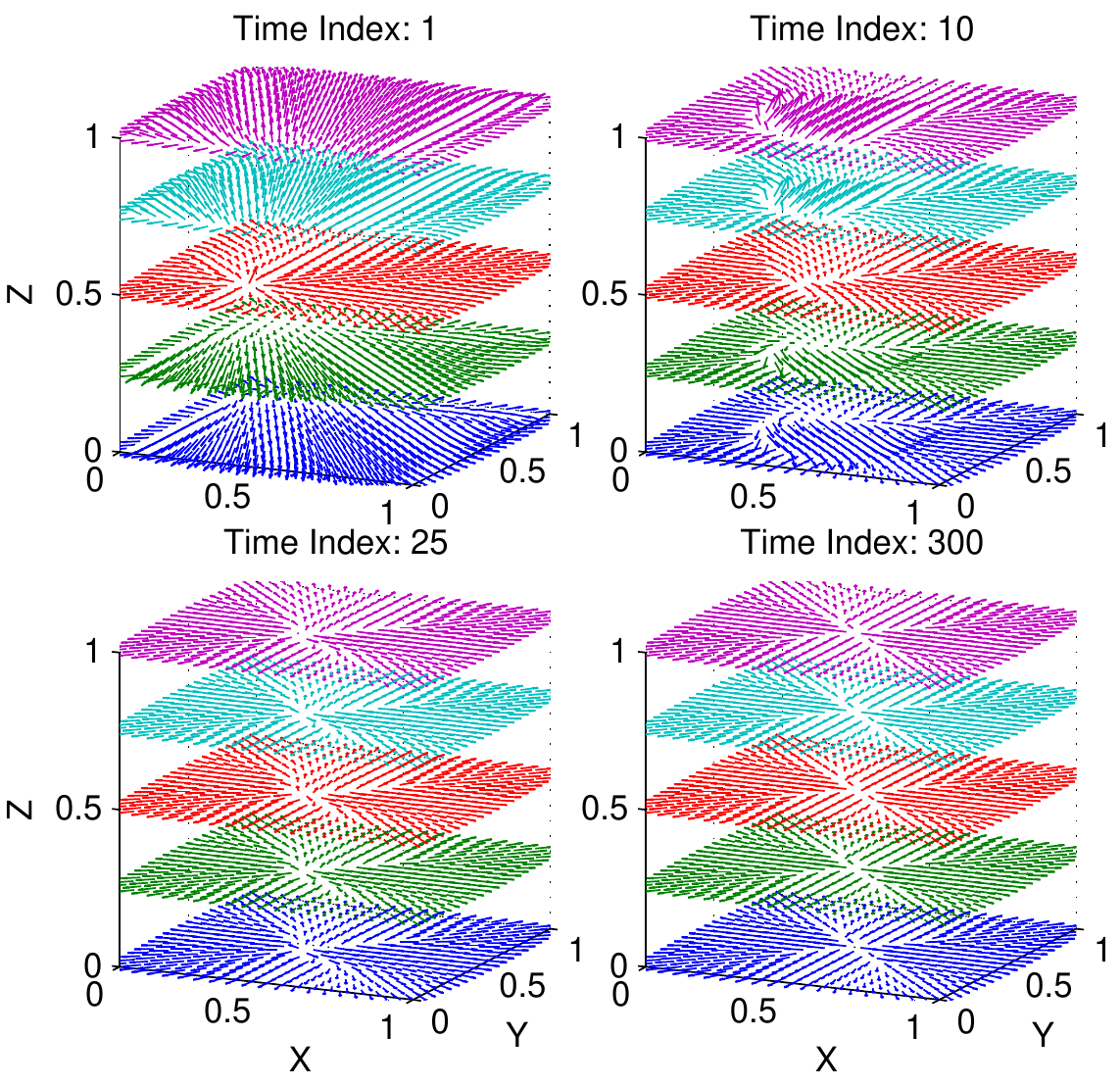}
\caption{Evolution toward an (equilibrium) ``propeller'' defect (Section \ref{sec:propeller_defect}).  Director field $\vn$ is shown at five different horizontal slices through the cube. The time step used was $\dt = 0.02$.}
\label{fig:Propeller_Defect_3D_Director}
\end{center}
\end{figure}
Figure \ref{fig:Propeller_Defect_3D_Director} shows the evolution of the director field $\vn$ toward a ``propeller'' defect (two plane defects intersecting).  Figure \ref{fig:Propeller_Defect_3D_Slice_Director_And_Scalar} shows $\vn$ and $s$ in their final equilibrium state at the $z = 0.5$ plane.  Both $\vn$ and $s$ are nearly uniform with respect to the $z$ variable.  The regularizing effect of $s$ is apparent, i.e. $s \approx 2 \times 10^{-5}$ near where $\vn$ has a discontinuity.  The 3-D shape of the defect resembles two planes intersecting near the $x=0.5$, $y=0.5$ vertical line, i.e. the defect looks like an ``X'' extruded in the $z$ direction.
\begin{figure}
\begin{center}


\includegraphics[width=5.0in]{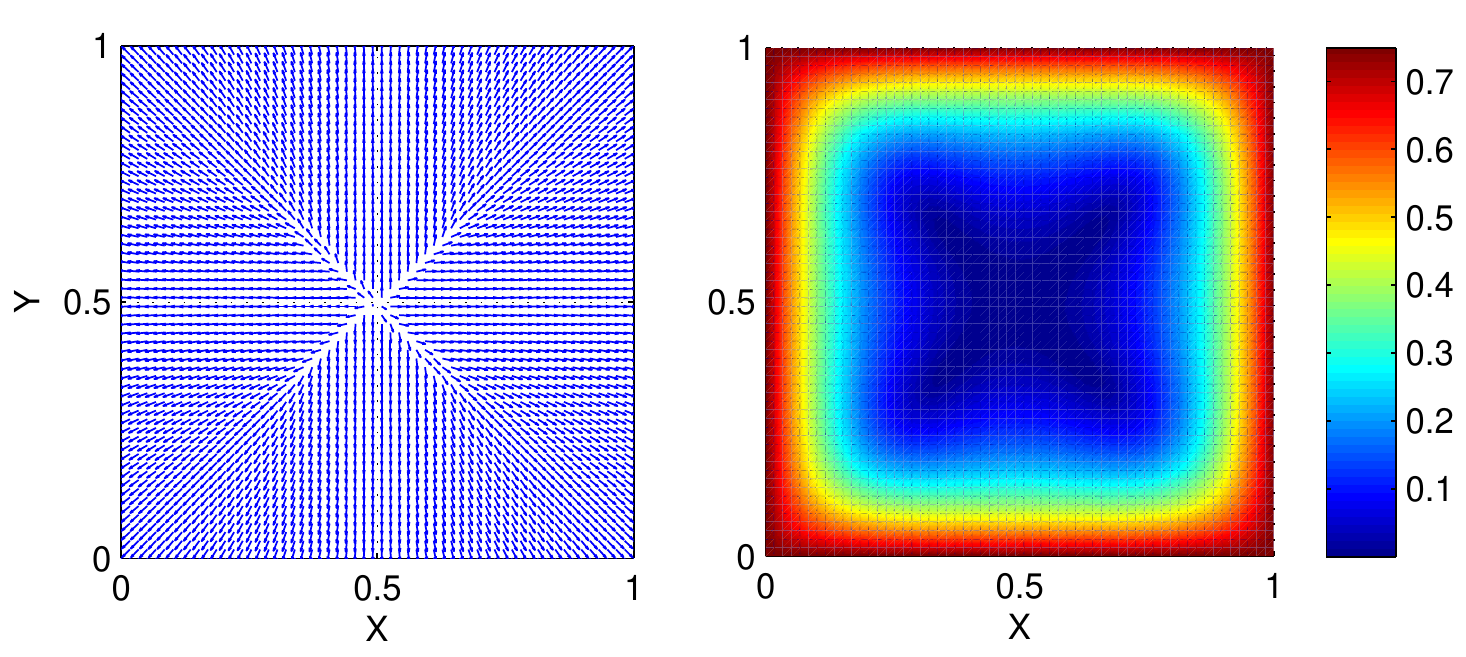}
\caption{Equilibrium state of a ``propeller'' defect (Section \ref{sec:propeller_defect}).  One horizontal slice ($z = 0.5$) is plotted: $\vn$ on the left, $s$ on the right ($\vn$ and $s$ are nearly independent of $z$).  The $z$-component of $\vn$ is zero and $s \approx 2 \times 10^{-5}$ near the discontinuity in $\vn$.}
\label{fig:Propeller_Defect_3D_Slice_Director_And_Scalar}
\end{center}
\end{figure}

\subsection{Floating plane defect}\label{sec:floating_plane_defect}

This example investigates the effect of the domain shape on the
defect.  The setup here is essentially the same as in Section
\ref{sec:propeller_defect}, with $\kappa = 0.1$, except the domain is
the rectangular box $\Om = (0, 1) \times (0, 0.7143) \times (0, 1)$.  Figure \ref{fig:Floating_Plane_Defect_3D_Slice_Director_And_Scalar} shows $\vn$ and $s$ in their final equilibrium state at the $z = 0.5$ plane.  Both $\vn$ and $s$ are approximately uniform with respect to the $z$ variable.  Instead of the propeller defect, we get a ``floating'' plane defect aligned with the major axis of the box.  Again, the regularizing effect of $s$ is apparent, i.e. $s \approx 7 \times 10^{-5}$ near where $\vn$ has a discontinuity.
\begin{figure}
\begin{center}


\includegraphics[width=5.0in]{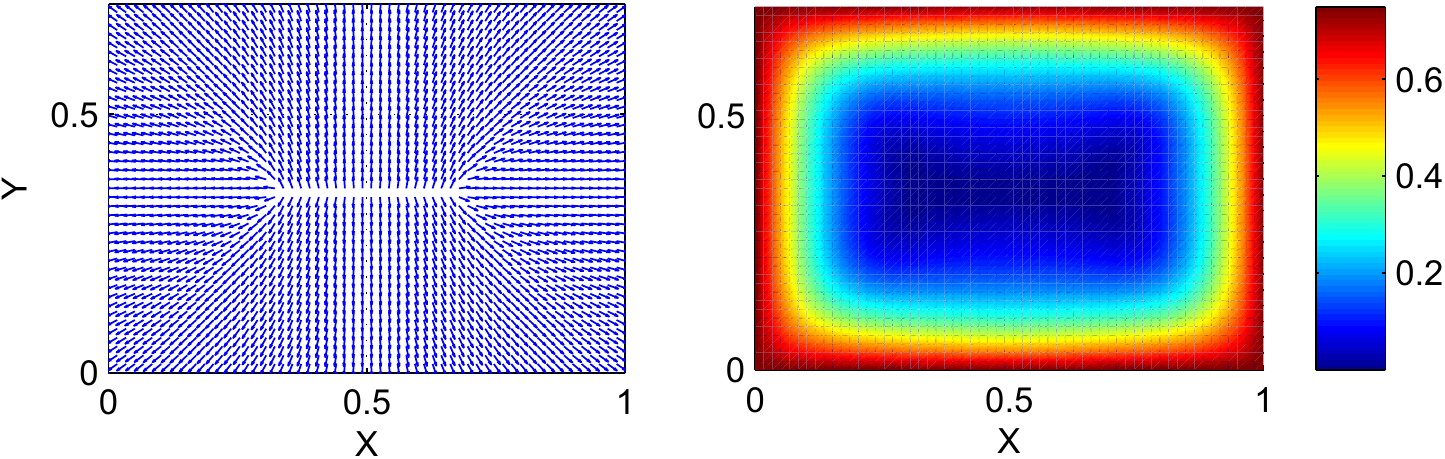}
\caption{Equilibrium state of a floating plane defect on a
rectangular domain (Section \ref{sec:floating_plane_defect}).  One horizontal slice ($z = 0.5$) is plotted: $\vn$ on the left, $s$ on the right ($\vn$ and $s$ are approximately independent of $z$).  The $z$-component of $\vn$ is zero and $s > 0$ with $s \approx 7 \times 10^{-5}$ near the discontinuity in $\vn$.}
\label{fig:Floating_Plane_Defect_3D_Slice_Director_And_Scalar}
\end{center}
\end{figure}

\section{Conclusion}\label{sec:conclusion}

We introduced and analyzed a robust finite element method for a degenerate energy functional that models nematic liquid crystals with variable degree of orientation.  We also developed a quasi-gradient flow scheme for computing energy minimizers, with a strict monotone energy decreasing property.  The numerical experiments show a variety of defect structures that Ericksen's model exhibits.  Some of the defect structures are high dimensional with surprising shapes (see Figure \ref{fig:Propeller_Defect_3D_Slice_Director_And_Scalar}).  We mention that \cite{Kralj_PRSA2014} also found a ``propeller'' (or ``X'') shaped defect within a two dimensional Landau-deGennes ($\vQ$-tensor) model.  An interesting extension of this work is to couple the effect of external fields (e.g. magnetic and electric fields) to the liquid crystal as a way to drive and manipulate the defect structures.

\medskip
\textbf{Acknowledgements:}
Nochetto and Zhang were partially supported by the NSF through the grant
DMS-1411808. Walker was partially supported by the NSF through
the grants DMS-1418994, DMS-1555222. Nochetto also acknowledges support of the
Institut Henri Poincar\'e (Paris)
and Zhang acknowledges support of the University of Maryland
by the Brin post-doctoral fellowship.
Finally, we thank L. Ambrosio for useful discussions regarding the
regularization argument in Proposition \ref{P:regularization}.\looseness=-1


\bibliographystyle{siam}
\bibliography{MasterBibTeX}

\end{document}